\long\def\metanote#1#2{{\color{#1}\
\ifmmode\hbox\fi{\sffamily\mdseries\upshape [#2]}\ }}
\long\def\OT#1{\metanote{blue!70!black}{{\tiny OT} #1}}
\newcommand{\ra}{\rightarrow}
\newcommand{\be}{\begin{equation}}
\newcommand{\ee}{\end{equation}}
\newcommand{\bi}{\begin{itemize}}
\newcommand{\ei}{\end{itemize}}
\newcommand{\commentout}[1]{}
\newcommand{\CDF}{\text{CDF}}
\newcommand{\TV}{\text{TV}}
\newcommand{\Wah}{\text{W}}
\newcommand{\Law}{{\mathcal{L}}}
\newcommand{\Leb}{{\text{Leb}}}
\newcommand{\calD}{{\mathcal{D}}}
\newcommand{\calO}{{\mathcal{O}}}
\newcommand{\calP}{{\mathcal{P}}}
\newcommand{\calS}{{\mathcal{S}}}
\newcommand{\Nm}{{\mathbb{N}}}
\newcommand{\Rm}{{\mathbb R}}
\newcommand{\Pm}{{\mathbb P}}
\newcommand{\expE}{{\mathbb E}}
\newcommand{\Ind}{{\mathbbm{1}}}
\newtheorem{theo}{Theorem}[section]
\newtheorem{lem}[theo]{Lemma}
\newtheorem{defin}[theo]{Definition}
\newtheorem{prop}[theo]{Proposition}
\newtheorem{rmk}[theo]{Remark}
\newtheorem{conj}[theo]{Conjecture}
\newcommand{\calD}{{\mathcal{D}}}
\newcommand{\calO}{{\mathcal{O}}}
\newcommand{\calP}{{\mathcal{P}}}
\newcommand{\calS}{{\mathcal{S}}}
\newcommand{\Nm}{{\mathbb{N}}}
\newtheorem*{rep@theorem}{\rep@title}
\newcommand{\newreptheorem}[2]{%
\newenvironment{rep#1}[1]{%
 \def\rep@title{#2 \ref{##1}}%
 \begin{rep@theorem}}%
 {\end{rep@theorem}}}
\long\def\metanote#1#2{{\color{#1}\
\ifmmode\hbox\fi{\sffamily\mdseries\upshape [#2]}\ }}
\long\def\OT#1{\metanote{blue!70!black}{{\tiny OT} #1}}
\long\def\JB#1{\metanote{red!70!black}{{\tiny JB} #1}}
\newtheorem*{rep@theorem}{\rep@title}
\newcommand{\newreptheorem}[2]{%
\newenvironment{rep#1}[1]{%
 \def\rep@title{#2 \ref{##1}}%
 \begin{rep@theorem}}%
 {\end{rep@theorem}}}
\long\def\metanote#1#2{{\color{#1}\
\ifmmode\hbox\fi{\sffamily\mdseries\upshape [#2]}\ }}
\long\def\OT#1{\metanote{green!55!black}{{\tiny OT} #1}}
\long\def\JB#1{\metanote{red!70!black}{{\tiny JB} #1}}
\begin{document}
\setcounter{page}{1}

\title{Selection principle for the $N$-BBM}
\author{Julien Berestycki\footnote{Department of Statistics and Magdalen College, University of Oxford, berestyc@stats.ox.ac.uk} and Oliver Tough\footnote{Department of Mathematics, University of Bath, oliverktough@gmail.com}}
\date{\today}

\maketitle
\begin{abstract}
    The $N$-branching Brownian motion with selection ($N$-BBM) is a particle system consisting of $N$ independent particles that diffuse as Brownian motions in $\mathbb{R}$, branch at rate one, and whose size is kept constant by removing the leftmost particle at each branching event. We establish the following selection principle: as $N\ra\infty$ the stationary empirical measure of the $N$-particle system converges to the minimal travelling wave of the associated free boundary PDE. This resolves an open question going back at least to \cite[p.19]{Maillard2012} and \cite{GroismanJonckheer}, and follows a recent related result by the second author establishing a similar selection principle for the so-called Fleming-Viot particle system \cite{Tough23}.
\end{abstract}

\section{Introduction and main results}\label{section:introduction}

The $N$-branching Brownian motion with selection (or $N$-BBM for short), is one of the simplest examples of a \emph{branching particle system with selection}. It consists of $N$ particles that move as standard Brownian motions and independently branch at rate 1. At each branching event, the leftmost particle is removed from the system and its label is given to the newborn particle, thus keeping the population size constantly equal to $N$. Equivalently, at rate $N-1$ the minimal particle jumps to the location of one of the remaining $N-1$ particles chosen uniformly at random. We denote the $N$-BBM by $\vec{X}^N_t=(X^1_t,\ldots,X^{N}_t)$, for $N\geq 1$. The driving Brownian motions are denoted by $W^i_t$ ($1\leq i\leq N$), so that $dX^i_t=dW^i_t$ in between jump times. 

We define 
\[
L^N_t:=\min_{1\leq i\leq N}X^i_t,\quad Y^i_t:=X^i_t-L^N_t,\quad 1\leq i\leq N,\quad \text{and}\quad \vec{Y}^N_t:=(Y^1_t,\ldots,Y^N_t).
\]
Then $\vec{Y}^N_t$ is the $N$-BBM, recentred so that the leftmost particle is at $0$. It is a strong Markov process on the state space
\begin{equation}\label{eq:GammaN defin}
\Gamma_N:=\{\vec{y}=(y_1,\ldots,y_N)\in \Rm_{\geq 0}^N:y_i=0\quad\text{for some}\quad 1\leq i\leq N\}.
\end{equation}

We write $\Pm_{\vec{x}^N}$ for the probability measure under which $\vec{X}^N_t$ is an $N$-BBM with $\vec{X}^N_0=\vec{x}^N$ almost surely. 


Our first result says that for fixed $N$, the centred process $\vec{Y}^N_t$ converges to its unique stationary distribution $\psi^N$.
\begin{theo}\label{theo:convergence to stationary distribution for fixed N}
For any fixed $1\leq N<\infty$, $\vec{Y}^N_t$ satisfies Doeblin's condition: there exists $c_N>0$ and $\alpha_N\in \calP(\Gamma_N)$ such that
\begin{equation}
\Law_{\vec{y}^N}(Y^N_1)(\cdot)\geq c_N\alpha_N(\cdot)\quad\text{for all}\quad \vec{y}^N\in \Gamma_N.
\end{equation}
It follows that there exists a unique stationary distribution for $(\vec{Y}^N_t)_{t\geq 0}$, which we denote by $\psi^N$, and that 
\begin{equation}\label{eq:convergence in total vairation for fixed N}
    \lvert\lvert \Law_{\vec{y}^N}(Y^N_t)(\cdot)-\psi^N(\cdot)\rvert\rvert\leq (1-c_N)^{\lfloor t\rfloor}\quad \text{for all}\quad t\geq 0,\; \vec{y}\in \Gamma_N.
\end{equation}
\end{theo}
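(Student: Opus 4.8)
The plan is to verify Doeblin's minorization condition directly, by showing that after one unit of time the law of $\vec Y^N_1$ started from any configuration $\vec y^N\in\Gamma_N$ dominates a fixed multiple of a common measure; the conclusion about existence, uniqueness and the geometric rate \eqref{eq:convergence in total vairation for fixed N} is then the classical Doeblin--Markov argument, which I would quote. So the real content is constructing $c_N>0$ and $\alpha_N$.

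First I would observe that it suffices to exhibit, uniformly in the starting point $\vec y^N$, a fixed-probability event on $[0,1]$ on which $\vec Y^N_1$ lands in some fixed ``nice'' region with a law bounded below by a fixed absolutely continuous measure. The natural choice is: on $[0,1]$ no branching/selection event occurs for the $N$-BBM (probability $e^{-N}$, since the total branching rate is $N$), so on this event $\vec X^N$ is just $N$ independent Brownian motions, and $\vec Y^N_1$ is their recentering. Conditioning further, I would ask the $N$ driving Brownian motions to do something specific: since Brownian motion has full support on $C[0,1]$, with positive probability (depending only on $N$ and the target, not on the start, once we also use a Girsanov/shift argument) all $N$ particles are, at time $1$, inside a bounded box and moreover ``well separated'' enough that the recentered vector $\vec Y^N_1$ has a density on $\Gamma_N$ bounded below on a fixed compact subset of $\Gamma_N$. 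Concretely, writing $x_i=y_i+\ell$, the map $(\ell,\vec y)\mapsto \vec x$ is a smooth change of variables, and the Gaussian transition density of $(X^1_1,\dots,X^N_1)$, integrated over the free parameter $\ell$ and restricted to $\{\min_i y_i=0\}$, gives a density for $\vec Y^N_1$ that is continuous and strictly positive on $\Gamma_N$; restricting to a fixed compact piece $K\subset\Gamma_N$ where this density is $\ge \delta_N>0$ \emph{uniformly over all starting points in} a bounded set yields the minorization on that set.

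The one real obstacle is that $\Gamma_N$ is \emph{not compact} — the starting configuration $\vec y^N$ can have arbitrarily large coordinates — so the lower bound on the time-$1$ density cannot be uniform in $\vec y^N$ by a naive continuity/compactness argument. To handle this I would use the selection mechanism rather than fight it: on the event that \emph{exactly one} particle — the current leftmost one — never branches while \emph{every other} particle branches and has its offspring's line eventually absorbed, or more simply, on the event that the particle realizing the current minimum stays put-ish while all others are, through a sequence of selection events, pulled to near it, the whole cloud is brought into a bounded box in time $1$ with probability bounded below \emph{independently of the starting spread}. The cleanest implementation: condition on the event $E$ that in $[0,\tfrac12]$ there are at least $N-1$ selection events and each time the removed (leftmost) particle is resampled onto the position of the particle that is currently the unique minimizer of the starting configuration's lowest coordinate's driving path — this is an event of probability bounded below by a function of $N$ alone (it only involves the Poisson clocks and the uniform resampling choices, plus a positive-probability requirement on the relevant Brownian increments staying $O(1)$), after which all $N$ particles lie within an $O(1)$ window; then on $[\tfrac12,1]$ apply the compact-set argument above. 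Thus $c_N$ is a product of: a probability-bounded-below ``collapse'' factor on $[0,\tfrac12]$ and the no-branching-plus-spread-out factor $\delta_N$ on $[\tfrac12,1]$.

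I expect the bookkeeping around the recentering map and the fact that $\vec Y^N$ lives on the lower-dimensional stratified set $\Gamma_N$ (so ``density'' must be understood as a density with respect to the natural surface/stratified measure, obtained by disintegrating the $N$-dimensional Gaussian along the translation direction) to be the fussiest part to write carefully, but it is routine. The genuinely important step — and the one I would make sure to get right — is the uniform-in-starting-point collapse: arguing that no matter how spread out $\vec y^N$ is, within time $\tfrac12$ the selection dynamics brings the configuration into a fixed bounded set with probability at least some $c_N'>0$ depending only on $N$. Once that is in hand, restarting from a bounded set reduces everything to the standard compactness argument, and Doeblin's theorem gives \eqref{eq:convergence in total vairation for fixed N} with $c_N=c_N'\,\delta_N$.
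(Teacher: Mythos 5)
Your overall two-phase plan --- (a) force the cloud into a fixed compact subset of $\Gamma_N$ on $[0,\tfrac12]$ with probability bounded below uniformly over the starting configuration, then (b) minorize the law of $\vec{Y}^N_1$ started from that compact set --- is exactly the structure of the paper's proof, and phase (b) is fine as you describe it (the paper invokes the parabolic Harnack inequality; your Gaussian-density / change-of-variables argument after conditioning on no selection events is an acceptable alternative). The genuine gap is in the collapse mechanism of phase (a). In the $N$-BBM, selection always removes the \emph{current minimum} and relocates it to the position of another particle; no selection event ever moves a non-minimal particle. In particular the particle that starts highest is essentially never the minimum over $[0,\tfrac12]$ when the initial spread is large, so its position changes only by its own Brownian increment, which is $O(1)$ on that event. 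Consequently ``pulling all other particles to near the current minimum'' cannot happen: the top particle cannot be brought down by selection, and a configuration of diameter $M$ cannot collapse downward into an $O(1)$ box. Your description of the conditioning event is also internally inconsistent --- the ``particle that is currently the unique minimizer of the starting configuration's lowest coordinate's driving path'' is precisely the particle being removed, so it cannot simultaneously be the target of the resampling jump.

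The fix, which is what the paper does, is to collapse \emph{upward}: condition on (i) every driving Brownian motion $W^i$ having oscillation at most $1$ on $[0,1]$, and (ii) there being $N$ selection events on $[0,\tfrac12]$, in each of which the current minimum is resampled onto the position of the \emph{current maximum}. This event has probability bounded below by a constant depending only on $N$, not on $\vec{y}^N$. On it, the top particle moves by at most $1$, and after the $k$-th jump the $k$ lowest starting particles have each been relocated to within $O(1)$ of the top particle's trajectory; after $N$ such jumps the whole cloud lies in an $O(1)$ window. Hence $\vec{Y}^N_{1/2}$ lies in a fixed compact $K\subseteq\Gamma_N$ regardless of $\vec{y}^N$, and your phase (b) then applies verbatim.
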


\begin{proof}[Proof of Theorem \ref{theo:convergence to stationary distribution for fixed N}]
There is a probability, uniformly bounded away from $0$ over all initial conditions, that in time $\frac{1}{2}$ there are $N$ killing events, in each of which the minimal particle jumps onto the maximal particle at the time, and that $\sup_{1\leq i\leq N}\sup_{0\leq s,t\leq 1}\lvert W^i_t-W^i_s\rvert\leq 1$.

On this event one can check that there is a compact set $K\subseteq \Gamma_N$ such that $\vec{Y}^N_{\frac{1}{2}}\in K$, whatever the initial condition. The claim then follows from the parabolic Harnack inequality.
\end{proof}

De Masi et al. established \cite[Theorem 1]{DeMasi2019} that the $N$-BBM has the following hydrodynamic limit. Consider a sequence of $N$-BBMs such that the initial positions of the particles are independent and identically distributed with some common distribution $u_0$, (where $u_0$ also satisfies certain assumptions). Then as $N\to \infty$ the empirical measure of the particle system converges to the solution of the following free boundary PDE,
\begin{equation}\label{eq:hydrodynamic limit}
\begin{cases}
\partial_tu=\frac{1}{2}\Delta u+u,\quad x>L_t,\\
u(L_t,t)=0,\\
\int_{L_t}^{\infty}u(t,y)dy=1\quad\text{for all}\quad t\geq 0\\
u(t,\cdot) \to u_0(\cdot) \text{ weakly as } t\to 0.
\end{cases}
\end{equation}
Here, the unknowns are both the function $u: \Rm\times \Rm_+ \mapsto [0,1]$ and the boundary $L: \Rm_+\mapsto \Rm.$
By $u(t,\cdot)\to u_0(\cdot)$ we mean that the measure $u_t(\cdot)d\Leb(\cdot)$ converges to the measure $u_0(\cdot)$ in the sense of weak convergence of measures ($\Leb$ being Lebesgue measure on $\Rm$). Global existence and uniqueness of the solutions of \eqref{eq:hydrodynamic limit} for any initial distribution was established in \cite[Theorem 1.1 and Corollary 2.1]{Berestycki2018}. 


A similar hydrodynamic limit for another branching-selection particle system was established earlier by Durrett and Reminik in \cite{Durrett2011}. The related boundary value problem where $L$ is given was also studied using probabilistic tools in \cite{BBHR17} and analytically in \cite{Henderson17}.

The free boundary problem \eqref{eq:hydrodynamic limit} is in the same universality class as the celebrated Fisher-KPP equation since it has the necessary three main ingredients: diffusion, growth and saturation. For instance it is easy to see that, like the Fisher-KPP equation, \eqref{eq:hydrodynamic limit} has a family of travelling wave solutions,
\[
u_c(x,t)=\pi_c(x-ct,t)\quad\text{for}\quad c\geq \sqrt{2}.
\]
The minimal travelling-wave solution - the travelling-wave solution with minimal wave speed - is $\pi_{\min}=\pi_{\sqrt{2}}$. It is given by
\begin{equation}\label{eq:minimal travelling wave}
\pi_{\min}(x)=2xe^{-\sqrt{2}x}, \quad x>0,
\end{equation}
and its wave speed is $c_{\min}=\sqrt{2}$. It is believed that the similarity of \eqref{eq:hydrodynamic limit} to the Fisher-KPP equation holds in much greater generality, see for instance Conjecture \ref{conj:dmain of attraction free boundary PDE}.

The purpose of the present article is to show that the stationary distribution $\psi^N$ converges to $\pi_{\min}$ in the appropriate sense. This is called a \emph{strong selection principle} since the particle system selects the minimal-speed travelling wave. 

To that end, denote by $\Theta^N:\Rm^N\ra \calP(\Rm)$ the map
\[
\Theta^N:(x_1,\ldots,x_{N})\mapsto \frac{1}{N}\sum_{i=1}^N\delta_{x_i}\in \calP(\Rm).
\]
Our main result is the following theorem.
\begin{theo}[Selection principle for the $N$-BBM]\label{theo:main theorem 0}
Let $\vec{Y}^N \sim \psi^N$ for $N<\infty$. Then 
\[
\Theta^N(\Vec{Y}^N) \to \pi_{\min} 
\]
weakly in probability as $N\to \infty$. 
\end{theo}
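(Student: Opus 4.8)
The plan is to combine a compactness/tightness argument with a characterization of all possible subsequential limits via the free boundary PDE \eqref{eq:hydrodynamic limit} and its travelling waves, together with a hydrodynamic-limit input identifying the limit as the \emph{minimal} wave. First I would establish tightness: for $\vec{Y}^N\sim\psi^N$, the random measures $\Theta^N(\vec{Y}^N)$ form a tight family in $\calP(\calP(\Rm))$. This requires a uniform (in $N$) control on the right tail of the stationary configuration — i.e. that under $\psi^N$, with high probability not too much mass sits far to the right — and a lower bound preventing the mass from concentrating at $0$. The tail bound should follow from a Lyapunov/moment argument for $\vec{Y}^N_t$: the rightmost particle cannot run away because branching produces at most a linear drift of the front while the recentring at $L^N_t$ keeps the bulk near the origin; one can quantify this with an exponential moment functional $\sum_i e^{\beta Y^i}$ for suitable $\beta<\sqrt2$ and show its $\psi^N$-expectation is bounded uniformly in $N$. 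Given tightness, extract a subsequence along which $\Theta^N(\vec{Y}^N)\Rightarrow \Xi$ for some random $\Xi\in\calP(\Rm)$.

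Next I would show every such limit $\Xi$ is, almost surely, a deterministic translate of the travelling-wave profile; the natural normalization (leftmost particle at $0$, total mass $1$) should force $\Xi=\pi_c$ for some (a priori random) $c\geq\sqrt2$. The mechanism is stationarity passed through the hydrodynamic limit: by \cite[Theorem 1]{DeMasi2019} and the global well-posedness of \eqref{eq:hydrodynamic limit} from \cite{Berestycki2018}, if at time $0$ the empirical measure is close to a measure $u_0\,d\Leb$, then at later times it tracks the PDE solution started from $u_0$; applying this with $u_0$ a realization of $\Xi$ and using that $\psi^N$ is stationary (so the law of $\Theta^N(\vec{Y}^N)$ does not move), any limit point must be a fixed point of the PDE dynamics modulo the recentring, i.e. a travelling wave. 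Here one must be careful: the hydrodynamic limit as stated in \cite{DeMasi2019} starts from i.i.d. particles, whereas $\psi^N$ is correlated, so I would either invoke a propagation-of-chaos statement under $\psi^N$ or run the argument over a short macroscopic time window and use the Markov property together with Theorem \ref{theo:convergence to stationary distribution for fixed N} to couple to an i.i.d.\ start; alternatively one works at the level of the one-particle marginals and a BBGKY-type closure. This identification step — upgrading ``subsequential limit of stationary empirical measures'' to ``travelling wave of \eqref{eq:hydrodynamic limit}'' rigorously, handling the correlations under $\psi^N$ — is what I expect to be the main obstacle.

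Finally I would rule out all supercritical speeds $c>\sqrt2$, leaving only $\pi_{\min}=\pi_{\sqrt2}$. The intuition is that the $N$-particle front moves strictly slower than any travelling wave with $c>\sqrt2$: the $N$-BBM front speed is known to converge to $\sqrt2$ (indeed from below, with the celebrated $\sim \pi^2/(\log N)^2$ correction of Bérard--Gouéré), so a stationary profile equal to $\pi_c$ with $c>\sqrt2$ would be incompatible with the front-speed asymptotics. Concretely, I would argue by contradiction: if $\Xi=\pi_c$ with $c>\sqrt2$ on a subsequence, then the front displacement $L^N_{t}$ over a macroscopic time $t$ would, by the hydrodynamic limit, be $\approx ct$, contradicting an a priori upper bound $L^N_t\leq (\sqrt2+o(1))t$ valid uniformly; the latter upper bound is classical for $N$-BBM (the front is dominated by that of unrestricted BBM, whose maximum grows like $\sqrt2\,t$). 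Combining: the only surviving limit point is $\pi_{\sqrt2}=\pi_{\min}$, it is deterministic, and since every subsequence has this same limit, the full sequence converges, giving $\Theta^N(\vec{Y}^N)\to\pi_{\min}$ weakly in probability. A cleaner alternative for this last step, if available in the companion analysis, is to use uniqueness/stability of the minimal travelling wave as the unique ``compactly-fed'' or ``front-like'' stationary regime of \eqref{eq:hydrodynamic limit}, which would directly exclude $c>\sqrt2$ without invoking sharp front asymptotics.
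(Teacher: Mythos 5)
Your proposal identifies the right overall architecture (tightness $\to$ characterise subsequential limits via the hydrodynamic limit $\to$ rule out supercritical speeds) and correctly flags the i.i.d.\ hypothesis in \cite[Theorem 1]{DeMasi2019} as the main technical obstacle, but there are genuine gaps and the key steps diverge from the paper's actual route.

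On tightness: you propose an exponential Lyapunov functional $\sum_i e^{\beta Y^i}$ with $\beta<\sqrt 2$ whose $\psi^N$-expectation is bounded uniformly in $N$. That is not what the paper does, and the uniformity in $N$ is not obviously easy to establish directly. The paper instead argues indirectly: it shows $v_N=\expE_{\psi^N}[L^N_1-L^N_0]\leq\sqrt 2$ (coupling the $N$-BBM front with an unkilled BBM, then Birkhoff), and separately proves via a local-martingale identity for the barycentre that $\expE_{\psi^N}[b(\vec{Y})]=v_N$ (Proposition \ref{prop:chiN tight}). Tightness of $\chi^N$ in $\calP(\calP(\Rm_{\geq 0}))$ then comes from tightness of the mean measures via \cite[Theorem 4.10]{Kallenberg2017}. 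This indirect route is exactly what is needed later, since the same $v_N\leq\sqrt 2$ bound feeds the boundary-velocity inequality.

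The substantive gap is your assertion that every subsequential limit $\Xi$ ``is, almost surely, a deterministic translate of the travelling-wave profile.'' This would amount to a classification of stationary measures of the recentred PDE flow $\Phi_t$, essentially requiring something like Conjecture \ref{conj:dmain of attraction free boundary PDE}, which is open. The paper explicitly avoids it: it never proves that $\zeta$-typical profiles are travelling waves. Instead it proves a \emph{two-sided} bound on the boundary velocity under $\zeta$. The upper bound $\expE_{\mu\sim\zeta}[L_t(\mu)-L_0(\mu)]\leq\sqrt 2\,t$ comes from passing $v_N\leq\sqrt 2$ through the hydrodynamic limit (Proposition \ref{prop:characterisation of subsequential limits}). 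The lower bound $L_t(\mu)-L_0(\mu)\geq\sqrt 2\,t$ $\zeta$-a.s.\ is the crucial new input: it combines the stretching lemma (Lemma \ref{lem:extended maximum principle}), the boundary comparison lemma (Lemma \ref{lem:boundary comparison lemma}), stationarity of $\zeta$ under $\Phi_t$, and the KPP-type convergence from a Heaviside initial condition (Lemma \ref{lem:convergence to minimal travelling wave from Heaviside}) to show that $\zeta$-a.e.\ profile is more stretched than $\tilde\Pi_{\min}$. Combining gives $L_t\equiv L_0+\sqrt 2\,t$ a.s., and then the stochastic representation of \eqref{eq:hydrodynamic limit} (Theorem \ref{theo:stochastic representation for PDE}) plus the QSD uniqueness result of \cite[Lemma 1.2]{Martinez1998} pins the profile to $\pi_{\min}$, with no classification of stationary measures required. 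Your proposed contradiction argument (``if $\Xi=\pi_c$ with $c>\sqrt 2$ then...'') only bites once you have already reduced to travelling waves, which is precisely what you don't have.

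On the hydrodynamic-limit obstacle you flagged: your two suggested workarounds (propagation of chaos under $\psi^N$, or a short macroscopic time window plus the Markov property and Theorem \ref{theo:convergence to stationary distribution for fixed N}) are not what the paper does and would not straightforwardly work, since you would still need a hydrodynamic limit from random initial empirical measures that are not i.i.d.\ and not absolutely continuous. The paper instead proves such an extension from scratch (Theorem \ref{theo:hydrodynamic limit for random ics}), using a mollification/sandwiching argument to remove the density hypothesis and a Monge-optimal coupling between two $N$-BBMs giving the Grönwall bound \eqref{eq:distance under coupling exponential bound}, which also yields continuity of the PDE flow $\Psi_t$ (Lemma \ref{lem:sensitivity depending on ics}). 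Finally, even after establishing Theorem \ref{theo:main theorem 0} for the median-recentred empirical measures, the paper needs an extra argument (the proof of \eqref{eq:convergence centred by its leftmost final proof statement}) to pass back to leftmost-centring, using the sharp first-moment identity $\int_0^\infty x\,\pi_{\min}(dx)=\sqrt 2$ to rule out a vanishing fraction of particles drifting to the left of the bulk; your proposal does not address this.
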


This result as actually a direct consequence of a slightly stronger result. 
We recall the definition of the Wasserstein metrics.
\begin{defin}\label{defin:Wasserstein}
Throughout, given a complete metric space $(E,d)$, we define $\calP(E)$ and $\calP_1(E)$ to be the space of Borel probability measures on $E$ and the space of Borel probability measures with finite first moment respectively. In the latter case this means that $\calP_1(E):=\{\mu\in \calP(E):\int_Ed(x_{\ast},x)\mu(dx)<\infty\}$, where $x_{\ast}\in E$ is some fixed distinguished point (note that this definition does not depend upon the choice of $x_{\ast}$). We then define on $\calP(E)$ and $\calP_1(E)$ respectively the Wasserstein metrics $\Wah$ and $\Wah_1$ by
\begin{align*}
\Wah(\mu_1,\mu_2)&:=\inf_{\substack{\nu\in \calP(E\times E)\text{ s.t.}\\((x,y)\mapsto x)_{\#}\nu=\mu_1,\\((x,y)\mapsto y)_{\#}\nu=\mu_2}}\int_{E\times E}[d(x,y)\wedge 1]d\nu ((x,y)),\\
\Wah_1(\mu_1,\mu_2)&:=\inf_{\substack{\nu\in \calP(E\times E)\text{ s.t.}\\((x,y)\mapsto x)_{\#}\nu=\mu_1,\\((x,y)\mapsto y)_{\#}\nu=\mu_2}}\int_{E\times E}d(x,y)d\nu ((x,y)).
\end{align*}
Then $\Wah$ metrises on $\calP(E)$ the topology of weak convergence of measures, whilst $\Wah_1$ metrises weak convergence in $\calP_1(E)$, meaning both weak convergence of measures and convergence of the first moment (see \cite[Definition 6.8 and Theorem 6.9]{Villani2009}).
\end{defin}

Then Theorem \ref{theo:main theorem 0} is an immediate consequence of the following, which we shall prove in Section \ref{section:proof}.
\begin{theo}\label{theo:main theorem}
We have the convergence 
\[
\expE_{ \vec{Y}^N\sim\psi^N}[\Wah_1(\Theta^N(\vec{Y}^N),\pi_{\min})]\ra 0
\]
as $N\ra\infty$. 
\end{theo}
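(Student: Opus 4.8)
The plan is to prove the convergence $\expE_{\vec Y^N\sim\psi^N}[\Wah_1(\Theta^N(\vec Y^N),\pi_{\min})]\to 0$ via a compactness-plus-identification argument combined with an a priori tightness/moment estimate on the stationary empirical measures. First I would establish a uniform exponential tail bound on the rightmost particle under stationarity, i.e. a bound of the form $\expE_{\vec Y^N\sim\psi^N}[\int_0^\infty y\,\Theta^N(\vec Y^N)(dy)]\le C$ uniformly in $N$, and more strongly that the family $\{\Law(\Theta^N(\vec Y^N)):N\ge 1\}$ is tight in $\calP_1(\calP_1(\Rm))$. The natural way to get this is a Lyapunov/drift argument for the $N$-BBM: the spread of the particle cloud is controlled because branching near the front together with the killing of the leftmost particle prevents the cloud from stretching, and Brownian fluctuations give sub-exponential tails. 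Concretely one controls $\frac1N\sum_i e^{\lambda Y^i}$ or the position of the maximum, using the known front-speed estimates for $N$-BBM (the cloud travels at speed $\sqrt2 - \pi^2/(2\log^2 N) + o(1)$, but for tightness one only needs crude bounds, e.g. that the diameter is $O(\log N)$ with good probability and has uniformly integrable rescaling, or an outright $N$-uniform tail via a comparison with a single BBM conditioned to stay to the right of the front).

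Given tightness, along any subsequence $\Law(\Theta^N(\vec Y^N))$ converges weakly in $\calP(\calP_1(\Rm))$ to some limit law $\Pi$, and I would show $\Pi = \delta_{\pi_{\min}}$, which forces convergence of the whole sequence and, together with uniform integrability from the moment bound, upgrades weak convergence to $\Wah_1$-convergence in expectation. The identification step has two halves. First, any limit point is supported on stationary solutions of the free boundary PDE \eqref{eq:hydrodynamic limit}: here one passes the stationarity of $\psi^N$ through the hydrodynamic limit of De Masi et al.\ \cite{DeMasi2019} — since $\psi^N$ is invariant for $\vec Y^N$, sampling the initial condition from $\psi^N$ gives a stationary-in-time particle system whose empirical measure, by the hydrodynamic limit, must converge to a (recentred) solution of \eqref{eq:hydrodynamic limit} that is itself stationary; by the uniqueness theory of \cite{Berestycki2018} the recentred stationary profiles are exactly the travelling waves $\pi_c$, $c\ge\sqrt2$. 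Second — and this is where the "minimal" gets selected — one rules out every $\pi_c$ with $c>\sqrt2$. The mechanism is that for $c>\sqrt2$ the travelling wave $\pi_c$ has tails $\sim e^{-\lambda x}$ with $\lambda = c - \sqrt{c^2-2} < \sqrt2$, i.e.\ heavier than $\pi_{\min}$; such a profile, when realized by $N$ particles, would have its front travelling at speed $> \sqrt2 + \Omega(1)$ under the $N$-BBM dynamics for large $N$ (because the contribution of the thick right tail of particles, once branching is accounted for, outruns any true $N$-BBM front), contradicting the known fact that the $N$-BBM front speed is $\le \sqrt2$. Equivalently, and more robustly, one compares with the Fleming–Viot selection principle established by the second author in \cite{Tough23}, or uses a direct supersolution/subsolution barrier: the $N$-BBM empirical measure is stochastically dominated in an appropriate sense by the $N$-BBM started from $\pi_{\min}$-like data, whose front cannot accelerate, so no stationary limit can have tails heavier than $\pi_{\min}$; combined with the PDE-uniqueness this leaves only $\pi_{\min}$.

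A cleaner route for the identification, which I would actually try first, is to avoid the full PDE machinery and argue directly at the level of the front: let $M^N := \max_i Y^i$ under $\psi^N$. Stationarity means the "empirical log-density" $x\mapsto \frac1N\#\{i: Y^i\le x\}$ is invariant under the $N$-BBM semigroup; testing this invariance against the eigenfunction-type test function $e^{\sqrt2 x}$ (which is the critical test function for the free boundary PDE, since $\pi_{\min}(x)=2xe^{-\sqrt2 x}$ makes $\int e^{\sqrt2 x}\pi_{\min}\,dx$ the boundary-critical quantity) yields, after Itô's formula and taking $N\to\infty$, that the limiting profile $\nu$ must satisfy $\frac12 \nu''+\nu=0$ on $(0,\infty)$ with $\nu(0)=0$ in the distributional sense together with $\int_0^\infty e^{\sqrt2 y}\nu(dy)<\infty$; the first condition gives $\nu(dy) = a\sin(\sqrt2\cdot) $-type or, with the correct recentring and the free-boundary growth condition from \eqref{eq:hydrodynamic limit}, precisely the one-parameter family $\pi_c$, and the integrability condition $\int e^{\sqrt2 y}\nu < \infty$ singles out $\pi_{\min}$ (for $c>\sqrt2$ the integral diverges). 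The main obstacle — and the step I expect to consume most of the work — is making this last implication rigorous: one must show that the $N$-uniform moment/tail bound is strong enough that the limiting stationary profile genuinely inherits the finiteness of $\int e^{\sqrt2 y}\nu(dy)$ (a critical, non-robust quantity), and simultaneously that it cannot concentrate all its mass at $+\infty$ as $N\to\infty$ (which would correspond to the front escaping and is exactly what the drift/Lyapunov estimate must preclude). In other words: tightness at the critical exponential scale is both the crux of the a priori estimate and the linchpin of the selection, and controlling it will require carefully exploiting the leftmost-particle killing mechanism — most likely via an $h$-transform / Girsanov comparison with a branching process conditioned to stay right of a linear barrier of slope $\sqrt2$, as in the heuristic behind \eqref{eq:minimal travelling wave}.
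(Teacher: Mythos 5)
Your high-level skeleton (tightness of $\{\chi^N\}$, passage to a subsequential limit $\zeta$, identification of $\zeta$ via the hydrodynamic limit) matches the paper's, but several load-bearing steps are either wrong or replaced in the paper by quite different arguments that you would need to discover. Three concrete gaps: \textbf{(i)} Your tightness step asks for a uniform-in-$N$ exponential tail bound via a Lyapunov/$h$-transform argument; the paper gets away with much less, namely a first-moment bound $\expE_{\vec Y^N\sim\psi^N}[b(\vec Y^N)]=v_N\le\sqrt 2$, and obtains it not by a drift estimate but by combining Birkhoff's ergodic theorem (which identifies $v_N$ with the stationary expectation of $L^N_1-L^N_0$) with a local-martingale decomposition of the barycentre $M^N_t$ (Propositions \ref{prop:N-particle velocity bounded by sqrt 2} and \ref{prop:chiN tight}). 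Trying to prove uniform-in-$N$ exponential integrability of $\max_i Y^i$ at the critical scale is a very different and much harder problem than what is actually needed. \textbf{(ii)} In the identification step you pass from ``$\zeta$ is invariant under the recentred PDE flow $\Phi_t$'' to ``$\zeta$ is supported on fixed points of $\Phi_t$, i.e.\ on travelling waves''. That implication is false in general for a deterministic flow (invariance of the push-forward does not force support on fixed points), and the paper never claims it; this is precisely why the paper needs the separate boundary-speed argument rather than a direct classification. \textbf{(iii)} Your proposed selection mechanism via the test function $e^{\sqrt 2 x}$, requiring $\int_0^\infty e^{\sqrt 2 y}\,\nu(dy)<\infty$, does not distinguish $\pi_{\min}$ from the $\pi_c$'s: with $\pi_{\min}(x)=2xe^{-\sqrt 2 x}$ one has $\int_0^\infty e^{\sqrt 2 y}\pi_{\min}(y)\,dy=\int_0^\infty 2y\,dy=\infty$, so that integral diverges for \emph{every} travelling wave including the minimal one, and the criterion is vacuous.

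What the paper actually does at the crux is different in spirit and worth contrasting. After Proposition \ref{prop:characterisation of subsequential limits} (which delivers $(\Phi_t)_\#\zeta=\zeta$ \emph{and} the one-sided bound $\expE_{\mu\sim\zeta}[L_t(\mu)-L_0(\mu)]\le\sqrt 2\,t$), the selection is forced by a purely PDE-side argument: since $U_0\ge_s\Ind(\cdot<0)=V_0$ for every $U_0\in\calD$, the stretching lemma (Lemma \ref{lem:extended maximum principle}) gives $\Phi_t^{\CDF}(U_0)\ge_s V_t$ for all $t$, and flow-stationarity of $\zeta^{\CDF}$ upgrades this to $U_0\ge_s V_t$ for all $t$, hence (by Lemma \ref{lem:convergence to minimal travelling wave from Heaviside}) $U_0\ge_s\tilde\Pi_{\min}$, $\zeta$-a.s. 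The boundary comparison lemma then gives $L_t-L_0\ge\sqrt 2\,t$ a.s., which combined with the $\le\sqrt 2\,t$ expectation bound forces $L_t-L_0\equiv\sqrt 2\,t$, and the stochastic representation (Appendix \ref{appendix:stochastic representation for PDE}) plus the QSD uniqueness of \cite{Martinez1998} pins down $\zeta=\delta_{\tilde\pi_{\min}}$. Note this deliberately circumvents any classification of the long-time behaviour of \eqref{eq:hydrodynamic limit} (which is Conjecture \ref{conj:dmain of attraction free boundary PDE} and remains open); your proposal implicitly assumes such a classification. Finally, the upgrade from weak convergence to the claimed $\Wah_1$ convergence is obtained in the paper not from a strong exponential moment bound but from the fact that equality must hold in the Fatou chain \eqref{eq:equation for showing tilde H = 0}, which yields convergence of the first moments of the mean measures $\xi^N$ to that of $\pi_{\min}$; your uniform integrability claim would also suffice but you have not established it.
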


We summarise the situation in the following diagram.
\begin{figure}[H]
\begin{center}
\begin{tikzcd}[column sep=6cm, row sep=3cm]
\substack{\text{$N$-BBM}} \arrow[r, "\text{\cite[Theorem 1]{DeMasi2019}}\quad (N\ra\infty)"] \arrow[d, "\substack{\text{Theorem \ref{theo:convergence to stationary distribution for fixed N}}\\(t\ra\infty)}"]
& \substack{\text{Solutions of \eqref{eq:hydrodynamic limit}/\eqref{eq:hydrodynamic limit CDF}}} \arrow[d, "\substack{\text{Conjecture \ref{conj:dmain of attraction free boundary PDE}}\\(t\ra \infty)}"] \\
\substack{\text{Stationary Distribution}\\ \text{for the $N$-BBM, $\psi^N$}} \arrow[r, "\text{Theorem \ref{theo:main theorem 0}/\ref{theo:main theorem}}\quad  (N\ra\infty)"]
& \text{Minimal travelling wave, $\pi_{\min}$}
\end{tikzcd}
\end{center}
\end{figure}

It is notable that we prove Theorem \ref{theo:main theorem} without knowing that the PDE \eqref{eq:hydrodynamic limit} converges in any sense to the minimal travelling wave, except for the particular case $u_0=\delta_0$ (or equivalently $U_0(x)=\Ind(x<0)$ for the integrated version of the equation). This is in contrast to the proof of the selection principle for the Fleming-Viot particle system with drift $-1$ \cite{Tough23}, and to other proofs in the literature (see, for example, the proof of the selection principle for the Brownian bees particle system \cite{Berestycki2022}), for which understanding the long-time behaviour of  the limiting PDE is an essential element of the proof. This is made possible by the argument employed in Subsection \ref{subsection:conclusion of proof}.

As indicated on the right-hand side arrow of the above diagram, it remains an open problem to characterise initial conditions for which the solutions of \eqref{eq:hydrodynamic limit} converge to the minimal travelling wave $\pi_{\min}$. 

Writing $U(x,t):=\int_x^{\infty}u(y,t)dy$ and $U_0(x):=\int_x^{\infty}u_0(dy)$, \eqref{eq:hydrodynamic limit} is equivalent to the following integrated version of the problem
\begin{equation}\label{eq:hydrodynamic limit CDF}
\begin{cases}
\partial_tU=\frac{1}{2}\Delta U+U,\quad x>L_t,\quad t>0,\\
U(x,t)= 1,\quad x\leq L_t,\quad t>0,\\
\partial_xU(L_t,t)= 0,\quad t>0\\
U(t,x)\ra U_0(x)\text{ pointwise at all continuity points of $U_0$ as $t\ra 0$.}
\end{cases}
\end{equation}

Basic properties of solutions of \eqref{eq:hydrodynamic limit CDF} are given by \cite{Berestycki2018}. We also have the following.
\begin{rmk}\label{rmk:U strictly decreasing at pve times}
Any solution $(u,L)$ of \eqref{eq:hydrodynamic limit} is strictly positive on $\{(x,t):x>L_t,t>0\}$, by the parabolic Harnack inequality, hence any solution $(U,L)$ of \eqref{eq:hydrodynamic limit CDF} is strictly decreasing on $\{x:x>L_t\}$, for any $t>0$.
\end{rmk}

The corresponding travelling waves are then given by $\Pi_c(x):=\int_x^{\infty}\pi_c(y)dy$ for $c\geq c_{\min}=\sqrt{2}$, so that the minimal travelling wave for \eqref{eq:hydrodynamic limit CDF} is given by
\begin{equation}
\Pi_{\min}(x):=\int_x^{\infty}\pi_{\min}(y)dy.
\end{equation}
By analogy with both the Fisher-KPP equation (see Subsection \ref{subsection:Background and related results}) and the law of Brownian motion in $\Rm_{>0}$ with constant negative drift conditioned not to hit $0$, we conjecture the following. 
\begin{conj}\label{conj:dmain of attraction free boundary PDE}
The following are equivalent:
\begin{enumerate}
    \item $\limsup_{x\ra\infty}\frac{1}{x}\ln U_0(x)\leq -\sqrt{2}$;
    \item $\limsup_{t\ra\infty}\frac{L_t}{t}\leq \sqrt{2}$;
    \item $\lim_{t\ra\infty}\frac{L_t}{t}= \sqrt{2}$;
    \item $U(L_t+x,t)\ra \Pi_{\min}(x)$ uniformly in $x$ as $t\ra\infty$.\label{enum:convergence to the minimal travelling wave}
\end{enumerate}
\end{conj}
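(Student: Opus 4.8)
Since the final statement is a conjecture, what follows is the strategy I would pursue rather than a proof. The plan is to establish the cycle of implications $(1)\Rightarrow(4)\Rightarrow(3)\Rightarrow(2)\Rightarrow(1)$, together with the \emph{unconditional} bound $\liminf_{t\ra\infty}L_t/t\geq\sqrt2$ --- the free-boundary analogue of the Aronson--Weinberger fact that growth--diffusion fronts never propagate slower than the minimal speed --- which serves only as an auxiliary ingredient. Every implication except $(1)\Rightarrow(4)$ should be within reach of comparison arguments of the kind classical for Fisher--KPP spreading; the genuine content is the convergence to the minimal travelling wave, for which I would adapt the Bramson/Lau theory of travelling-wave selection to the free-boundary setting.

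\emph{The speed part.} The implication $(3)\Rightarrow(2)$ is trivial, and $(2)\Rightarrow(3)$ then follows from the auxiliary bound, since $(2)$ gives $\limsup_{t\ra\infty}L_t/t\leq\sqrt2$ while the auxiliary bound supplies the matching lower bound. For $(2)\Rightarrow(1)$ I argue by contraposition: if $(1)$ fails there is $\lambda<\sqrt2$ and $x_n\ra\infty$ with $U_0(x_n)\geq e^{-\lambda x_n}$. Propagating this sparse lower bound by the linearized equation $\partial_t v=\tfrac{1}{2}\Delta v+v$, whose solution $v(\cdot,t)=e^t(G_t\ast u_0)(\cdot)$ with heat kernel $G_t$ is explicit, and optimizing over which atom $x_n\asymp\beta t$ to track, one finds that $v$ --- and hence $u$, once the killing at $L_\cdot$ is accounted for --- carries mass of order one near $x\asymp\bigl(\tfrac{1}{\lambda}+\tfrac{\lambda}{2}\bigr)t$; as $\tfrac{1}{\lambda}+\tfrac{\lambda}{2}>\sqrt2$ for $\lambda<\sqrt2$, this forces $L_t\geq\bigl(\tfrac{1}{\lambda}+\tfrac{\lambda}{2}-o(1)\bigr)t$ by comparing \eqref{eq:hydrodynamic limit CDF} with a suitably shifted faster travelling wave $\Pi_c$ (using monotone dependence on the initial datum from \cite{Berestycki2018}), contradicting $(2)$. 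The auxiliary bound is obtained the same way, comparing instead with a shift of the minimal wave \eqref{eq:minimal travelling wave} and invoking Remark \ref{rmk:U strictly decreasing at pve times} for nondegeneracy. Finally, for $(4)\Rightarrow(3)$: uniform convergence $U(L_t+\cdot,t)\ra\Pi_{\min}$ upgrades to $C^2_{\mathrm{loc}}$ convergence in the moving frame by interior parabolic regularity, so substituting $U(x,t)\approx\Pi_{\min}(x-L_t)$ into the PDE and using that $\Pi_{\min}$ solves the speed-$\sqrt2$ travelling-wave equation $\tfrac{1}{2}\Pi_{\min}''+\sqrt2\,\Pi_{\min}'+\Pi_{\min}=0$ (equivalently, that $U(L_t,t)=1$ and $\partial_xU(L_t,t)=0$ force $\partial_{xx}U(L_t,t)\ra -2$) yields $\dot L_t\ra\sqrt2$, hence $\lim_{t\ra\infty}L_t/t=\sqrt2$.

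\emph{The main implication $(1)\Rightarrow(4)$.} Here I would exploit the representation of $u(\cdot,t)$ as the unit-mass renormalization of the expected occupation density of branching Brownian motion started from the law $u_0$ and killed below the curve $s\mapsto L_s$ that the constraint $\int_{L_t}^\infty u(\cdot,t)=1$ selects --- the free-boundary counterpart of the fixed-barrier representations of \cite{BBHR17}. The steps: (i) show $L_t=\sqrt2\,t+o(t)$, with enough extra control (for instance $L_t-\sqrt2\,t=O(\ln t)$) to run the rest of the argument, by first- and second-moment estimates on the number of surviving particles to the right of a test level, as for branching Brownian motion with an asymptotically critical absorbing barrier; (ii) deduce tightness of $\{U(L_t+\cdot,t):t\geq1\}$ in $C(\Rm)$ from parabolic estimates and a uniform exponential tail of $u$, the latter from the linearized upper bound $u\leq e^t(G_t\ast u_0)$ and hypothesis $(1)$; (iii) take subsequential limits along $t_n\ra\infty$ of the recentred solutions $(x,s)\mapsto U(x+L_{t_n},\,t_n+s)$ and show, via a Lyapunov functional for \eqref{eq:hydrodynamic limit CDF} (the free-boundary analogue of the Fisher/relative-entropy functional for KPP) or via the probabilistic route to Bramson's theorem, that each such limit is a travelling-wave profile $\Pi_c$; (iv) observe that every such $c$ satisfies $c\geq\sqrt2$ by the travelling-wave existence theory of \cite{Berestycki2018} and $c\leq\sqrt2$ by step (i) (or already by $(2)$), so $c=\sqrt2$ --- hence the $\omega$-limit set in the moving frame is $\{\Pi_{\min}\}$ and $(4)$ holds.

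\emph{Main obstacle.} The crux is steps (i) and (iii). The constraint $\int_{L_t}^\infty u=1$ makes $L_t$ a nonlocal functional of the solution, so the moving frame is only asymptotically autonomous; one must quantify the rate at which $\dot L_t$ stabilizes --- not merely that $L_t/t\ra\sqrt2$ --- finely enough to reach a genuine travelling-wave limit, and, since the usual KPP Lyapunov structure is tied to autonomy in a fixed frame, adapting it to a domain with a time-dependent, self-consistently determined boundary is delicate. One may instead have to route step (iii) through the sharp Bramson-type front asymptotics $L_t=\sqrt2\,t-\tfrac{3}{2\sqrt2}\ln t+O(1)$, which brings in the near-critical second-moment estimates of Berestycki--Berestycki--Schweinsberg type and their sensitivity to the curvature of $L_\cdot$. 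A subsidiary obstacle lies in $(2)\Rightarrow(1)$: upgrading the slow decay of $U_0$ from the sparse set $\{x_n\}$ to an estimate strong enough to dominate a travelling-wave subsolution requires careful bookkeeping of the mass removed at the free boundary, which I would handle by a bootstrap over successive time windows.
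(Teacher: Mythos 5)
This statement is Conjecture \ref{conj:dmain of attraction free boundary PDE}, which the paper explicitly leaves open (indeed the introduction stresses that Theorem \ref{theo:main theorem} is proved \emph{without} knowing it, and that even the Bramson-type asymptotics of $L_t$ derived non-rigorously in \cite{Berestycki2018a} remain an open problem). There is therefore no proof in the paper to compare against, and your submission is, as you say yourself, a research programme rather than a proof; it cannot be graded as correct or incorrect in the usual sense.

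As a programme it is reasonable and consistent with the heuristics the paper itself offers (the analogy with Bramson's tail condition \eqref{eq:tail for FKPP to converge to travelling wave}, and the stochastic representation of Appendix \ref{appendix:stochastic representation for PDE} linking the problem to Brownian motion killed at the free boundary and to the QSD results of \cite{Martinez1998,Tough23}). A few remarks. First, several of your ``easy'' implications can be routed through machinery already in the paper rather than through linearization: the auxiliary bound $\liminf_t L_t/t\geq\sqrt2$ and the implication $(4)\Rightarrow(3)$ both follow from the stretching lemma (Lemma \ref{lem:extended maximum principle}), the boundary comparison lemma (Lemma \ref{lem:boundary comparison lemma}) and Lemma \ref{lem:convergence to minimal travelling wave from Heaviside}, since every $U_0\in\calD$ satisfies $U_0\geq_s\Ind(\cdot<0)$; in particular your route to $(4)\Rightarrow(3)$ via $\dot L_t\ra\sqrt2$ presupposes differentiability and regularity of the free boundary that is not known, whereas sandwiching $L_{t+s}-L_t$ by stretching-comparison with $\Pi_{c}$ avoids this. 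Second, in $(2)\Rightarrow(1)$ the ``sparse sequence'' difficulty you flag largely disappears: $U_0$ is non-increasing, so $U_0(x_n)\geq e^{-\lambda x_n}$ automatically gives $U_0\geq e^{-\lambda x_n}\Ind(\cdot<x_n)$, a shelf to which the comparison principle of \cite{Berestycki2018} applies directly. Third, and decisively, your step (i) (control of $L_t-\sqrt2\,t$) and step (iii) (a Lyapunov or $\omega$-limit argument for a flow whose boundary is determined nonlocally by the unit-mass constraint) are precisely the open core of the conjecture; identifying them as the obstacles is accurate, but the proposal does not close them.
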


One can think about the free boundary PDE \eqref{eq:hydrodynamic limit} in the following way. Let $(u,L)$ be its solution (with some initial condition $u_0$), and now consider a Brownian motion $B_t$ started from a random initial position with distribution $u_0$ and killed upon hitting the boundary $L_t$. Then $u_t$ is the density of $B_t$ conditioned to be alive at time $t$. Since traveling wave solutions to \eqref{eq:hydrodynamic limit} have linear free boundaries $L_t=ct, c\ge \sqrt 2$, the  travelling wave solution with speed $c$ corresponds to the quasi-stationary distribution (QSD) for Brownian motion with drift $-c$ killed at 0 for which the absorption time is exponential with parameter one (which after rescaling is also the QSD for Brownian motion with drift $-1$ for which the absorption time is exponential with rate $c^{-2}$). See \cite[Section 4]{GroismanJonckheer} for a deeper discussion of this. Interestingly, it was shown in \cite[Theorem 1.3]{Martinez1998} that if one starts a Brownian motion with drift $-1$ killed upon hitting $0$, $(X_t:0\leq t<\tau_0)$, from some distribution $u_0$ satisfying the  tail condition \eqref{eq:tail for FKPP to converge to travelling wave} appearing for the FKPP, then $\Law_{u_0}(X_t\lvert\tau_0>t)$ converges to the minimal QSD as $t\ra \infty$. This condition was shown to be necessary in \cite[Theorem 1.1]{Tough23}, and is analogous to our Conjecture \ref{conj:dmain of attraction free boundary PDE}.



\subsection{Proof strategy and organisation of the paper}

Let us describe informally how the proof of Theorem \ref{theo:main theorem} works. 

The first order of business is to prove the tightness of the $\psi^N$s. The main tool for accomplishing this will be to apply Birkhoff's theorem. It is easy to establish that the particle system has an almost-sure asymptotic velocity $v_N$ (see \eqref{eq:almost sure limit of lower boundary}) for any initial condition, which is at most $\sqrt{2}$. Birkhoff's theorem tells us that almost-sure long-time limits are equal to the corresponding expectation under the stationary distribution - \textit{so if we know a given long-time almost-sure limit, then we know the corresponding expectation under the stationary distribution}. Therefore if the starting configuration is drawn from $\psi^N$, by Birkhoff's theorem, the expected velocity of the barycentre of the cloud of particles is $v_N\le \sqrt 2.$ Then by a martingale argument, we show that this value is also the expectation of the barycentre of a cloud of particles distributed according to $\psi^N$. This implies tightness. This is accomplished in Propositions \ref{prop:N-particle velocity bounded by sqrt 2} and \ref{prop:chiN tight}.

The next step is to characterize sub-sequential limits. To do this we let $\zeta$ be an arbitrary sub-sequential limit. {\it A priori} $\zeta$ is a random probability measure, and we want to show that it is almost surely equal to $\pi_{\min}$. For any given initial condition $\mu$, the free boundary PDE gives us a boundary $L_t(\mu)$ and defines a flow after appropriate recentring. We then use the hydrodynamic limit theorem \cite[Theorem 1]{DeMasi2019} to say that $\zeta$ must be (see Proposition \ref{prop:characterisation of subsequential limits}):
\begin{enumerate}
    \item stationary under the flow given by the free boundary PDE;
    \item such that $\expE_{\mu\sim \zeta}[L_t(\mu)-L_0(\mu)]\leq \sqrt{2}t$, where randomness here is given by the randomness of the initial condition.
\end{enumerate}

We then adapt classical PDE arguments (the so-called stretching Lemma), combined with a stationarity argument, to obtain that $L_t(\mu)-L_0(\mu)\geq \sqrt{2}t$ for $\zeta$-almost every $\mu$. We conclude that $L_t(\mu)\equiv \sqrt{2}t$ almost surely, and from this the conclusion follows (see Propososition \ref{prop:possible invariant measure unique} and Lemma \ref{lem:free boundary a.s. sqrt 2 t}).

The overall structure and some key ideas employed here are similar to those employed  by the second author to prove an analogous result for the Fleming-Viot particle system with drift $-1$, see \cite{Tough23}. However, different techniques must be developed to make the general strategy work in this new context. More precisely,  the most novel aspect of the present proof is the use of PDE arguments to characterise sub-sequential limits. These are needed to replace QSD results from \cite{Martinez1998} which are key to characterising sub-sequential limits in \cite{Tough23} but which are not available here. Moreover the proof of compactness employed here is quite different.

\subsection{Background and related results}\label{subsection:Background and related results}

Theorem \ref{theo:main theorem 0} provides a so-called \textit{selection principle} for the $N$-BBM. This resolves an open question going back at least to Maillard (\cite[p.19]{Maillard2012}, \cite[p.1066]{Maillard2016}) and Groisman and Jonckeere \cite[p.251]{GroismanJonckheer}, having also been conjectured by N. Berestycki and Zhao \cite[p.659]{Berestycki2018b}, and De Masi, Ferrari, Presutti and Soprano-Loto \cite[p.548]{DeMasi2019}. It follows a recent related result of the second author establishing a similar selection principle for the Fleming-Viot particle system with drift $-1$ \cite{Tough23}. This is a different particle system arising in a different context, but which nevertheless bears clear similarities to the $N$-BBM and for which there was an analogous selection problem. Groisman and Jonckeere have produced an excellent survey \cite{GroismanJonckheer} on the selection problems for these two particle systems, and the relationship between them.


The selection problem arose in the context of front propagation. The Fisher-KPP equation,
\begin{equation}\label{eq:FKPP equation}
\frac{\partial u}{\partial t}=\frac{1}{2}\frac{\partial^2 u}{\partial x^2}+u(1-u),
\end{equation}
was introduced independently in 1937 by Fisher \cite{Fisher1937} and Kolmogorov, Petrovskii and Piskunov \cite{Kolmogorov1937} as a model for the spatial spread of an advantageous allele. It was independently shown by both to have an infinite family of travelling wave solutions - solutions of the form $u_c(t,x)=w_c(x-ct)$ - for all wave speeds $c\geq c_{\min}=\sqrt{2}$, but not for any wave speed less than $\sqrt{2}$. Kolmogorov, Petrovskii and Piskunov \cite{Kolmogorov1937} established that, starting a solution $u$ of \eqref{eq:FKPP equation} from a Heaveside step function, there exists $\sigma(t)=\sqrt{2}t+o(t)$ such that $u(x+\sigma(t),t)$ converges to $w_{v_{\min}}(x)$. Bramson \cite{Bramson1978,Bramson1983} refined the speed to $\sigma(t)=\sqrt{2}t-\frac{3}{2\sqrt{2}}\log t+\mathcal{O}(1)$, and showed that the domain of attraction is given by initial conditions $u_0$ such that $\liminf_{x\ra -\infty}\int_{x-H}^xu_0(y)dy>0$ for some $H<\infty$, and 
\begin{equation}\label{eq:tail for FKPP to converge to travelling wave}
\limsup_{x\ra\infty}\frac{1}{x}\log \Big[\int_x^{\infty}u_0(y)dy\Big]\leq -\sqrt{2}.
\end{equation}
Therefore we have convergence to the travelling wave with minimal wave speed when the initial condition has sufficiently light tails. This is a \textit{macroscopic selection principle}. 

This partly motivates Conjecture \ref{conj:dmain of attraction free boundary PDE}, since the free boundary PDE \eqref{eq:hydrodynamic limit} is in the same universality class as the Fisher-KPP equation. In the physics literature, the first author, Brunet and Derrida in \cite{Berestycki2018a} have non-rigorously derived the asymptotics of the free boundary $L_t$, which match those of the Fisher-KPP equation. It is an open problem to make this rigorous.

The aforedescribed PDEs are \textit{deterministic}, with the minimal travelling wave being ``selected'' by virtue of the initial condition having a sufficiently light tail. We contrast this with a \textit{microscopic selection principle}, in which the travelling wave is ``selected'' by virtue of the introduction of a microscopic amount of random noise. This noise can be incorporated by considering either a stochastic PDE or an interacting particle system. 


The first \textit{weak microscopic selection principle} is due to Bramson et al. \cite{Bramson1986} in 1986. They considered a system (very different to the one we consider here) parametrised by a parameter $\gamma<\infty$ (large $\gamma$ representing small noise) which has a hydrodynamic limit given by a reaction-diffusion equation as $\gamma\ra\infty$. They showed that for all $\gamma<\infty$ this system, seen from its rightmost particle, has a unique invariant distribution. Then they showed that the velocity of this stationary distribution, appropriately rescaled, converges to the minimal wave speed of the corresponding reaction-diffusion equation. This is a \textit{weak selection principle} since they established convergence of the wave speed but not of the profile of the stationary distribution. 

Starting in the nineties, the work of Brunet and Derrida et al. \cite{Brunet1997,Brunet1999,Brunet2001,Brunet2006,Brunet2007} engendered a huge growth in the study of the effect of noise on front propagation. The $N$-BBM considered in this paper belongs to a class of branching-selection particle systems introduced by Brunet and Derrida \cite{Brunet1997,Brunet1999}. It has an asymptotic velocity $v_N$ (see \eqref{eq:almost sure limit of lower boundary} for a formal definition). Brunet and Derrida conjectured that
\begin{equation}\label{eq:BerardGouerevelocityNBBM}
v_N=\sqrt{2}-\frac{\pi^2}{\sqrt{2} \log^2N}+o((\log N)^{-2}).
\end{equation}
This conjecture includes not only the statement that $v_N\ra \sqrt{2}$ (a weak selection principle), but that the rate of convergence is given by a (surprisingly large) $-\frac{\pi^2}{\sqrt{2} \log^2N}$ correction term. This conjecture was proven for the $N$-branching random walk by Bérard and Gouéré in \cite{Berard2010}. Brunet and Derrida made a similar prediction for stochastic PDEs, which was proven in great generality by Mueller, Mytnik and Quastel \cite{Mueller2011}.

In contrast a \textit{strong selection principle}, in which one establishes that the profile of the $N$-particle stationary distribution converges to that of the minimal travelling wave as in Theorem \ref{theo:main theorem}, had not been established for any particle system in the travelling wave setting until the present article, and the recent result of the second author establishing a strong selection principle for the Fleming-Viot particle system with drift $-1$ \cite{Tough23}. 

We finally mention a recent similar result for the Brownian bees particle system by the first author, Brunet, Nolen and Penington \cite{Berestycki2022}. This is a variant of the $N$-BBM whereby, instead of killing the leftmost particle at each selection step, one instead kills the particle furthest away from $0$. Under this dynamic, the particles tend to stay near the origin, allowing for a proof strategy which is very different from that of the present article. The analogue of travelling waves in this context is the principal Dirichlet eigenfunction on a ball of uniquely determined radius. No selection is involved since this eigenfunction is unique - there's no analogue of non-minimal travelling waves.

\section{Proof of Theorem \ref{theo:main theorem}}\label{section:proof}

We write $\tau_n^i$ and $\tau_n$ for the $n^{\text{th}}$ killing time of particle $X^i$ (respectively of all particles in the $N$-BBM) for $n\geq 1$ and $i\in \{1,\ldots,N\}$, with $\tau_0,\tau^i_0:=0$. We write $N^i_t:=\sup\{n\geq 0:\tau^i_n\leq t\}$ for $i\in\{1,\ldots,N\}$ and $N_t:=\sup\{n\geq 0:\tau_n\leq t\}$, counting the number of deaths of particle $i$ and of all particles up to time $t$ respectively. 

We recall from \eqref{eq:GammaN defin} that $\Gamma_N:=\{\vec{y}=(y_1,\ldots,y_N)\in \Rm_{\geq 0}^N:y_i=0\quad\text{for some}\quad 1\leq i\leq N\}$. We then define
\begin{equation}\label{eq:empirical mean}
b(\vec{y}):=\frac{1}{N}\sum_{i=1}^Ny^i,\quad \vec{y}=(y^1,\ldots,y^N)\in \Gamma_N,
\end{equation}
giving the empirical mean of the particle system when it is recentred so that the minimal particle is at $0$. 

\begin{rmk}\label{rmk:unif integrability from the left of leftmost}
Observe that writing  $(L_1-L_0)_-:=\lvert (L_1-L_0)\wedge 0\rvert$, an easy coupling argument shows that
\[
\expE_{\vec{x}^N} \left[(L_1-L_0)_- \right] \le \expE_{(0,\ldots , 0)} \left[  (L_1-L_0)_- \right]<\infty.
\]
\end{rmk}

\subsection{The asymptotic velocity of the leftmost particle is at most $\sqrt{2}$}
The following proposition ensures that the asymptotic velocity of the $N$-BBM exists and is at most $\sqrt{2}$. It also ensures that at stationarity, the distance between the rightmost and leftmost particle is an integrable random variable.
\begin{prop}\label{prop:N-particle velocity bounded by sqrt 2}
For $N$ fixed, 
\begin{equation}\label{eq:almost sure limit of lower boundary}
\frac{1}{t}L^N_t\ra v_N  \leq \sqrt{2}=c_{\min}  \quad\text{almost surely as}\quad t\ra\infty.
\end{equation}
The constant $v_N \in (0,\sqrt 2)$ is independent of the initial condition. Furthermore,
\begin{equation}\label{eq:vN equal to expectation}
    v_N=\expE_{\vec{Y}^N_0\sim \psi^N}[L^N_1-L^N_0].
\end{equation}
Finally, we have that 
\begin{equation}\label{eq:rightmost minus leftmost integrable}
    \expE_{\vec{Y}^N\sim \psi^N}[\max_{1\leq i\leq N}Y^{i}]<\infty\quad\text{for all}\quad N<\infty.
\end{equation}
\end{prop}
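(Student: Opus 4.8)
The plan is to establish the four assertions in sequence, leaning on subadditivity for the existence of $v_N$, on Birkhoff's ergodic theorem for the identity \eqref{eq:vN equal to expectation}, on a comparison with branching Brownian motion (without selection) for the bound $v_N\le\sqrt2$, and finally on a martingale/stationarity argument for the integrability \eqref{eq:rightmost minus leftmost integrable}.

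\emph{Existence of $v_N$ and independence of initial condition.} First I would observe that $L^N_t$ is almost surely increasing in the initial condition (a coupling in which the driving Brownian motions $W^i$ are shared shows $L^N_t(\vec x^N)\le L^N_t(\vec x^N{}')$ when $\vec x^N\le \vec x^N{}'$ coordinatewise, after relabelling; more carefully, one couples so that the recentred configurations stay ordered, a standard fact for the $N$-BBM). By the Markov property and stationarity of the increments of Brownian motion, the quantities $a_{s,t}:=\expE_{\vec Y^N_0\sim\psi^N}$ of suitable shifted versions are subadditive; but the cleaner route is: the centred process $\vec Y^N_t$ is a positive-recurrent Markov process (Theorem \ref{theo:convergence to stationary distribution for fixed N}), and $L^N_{t+s}-L^N_t$ is, conditionally on $\vec Y^N_t$, distributed as $L^N_s-L^N_0$ started from $\vec Y^N_t$. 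Hence $t\mapsto L^N_t$ is an additive functional of a positive-recurrent Markov chain sampled at integer times, and the ergodic theorem for Markov chains gives $\frac1n L^N_n\to \expE_{\psi^N}[L^N_1-L^N_0]=:v_N$ almost surely; interpolation between integer times is controlled by Remark \ref{rmk:unif integrability from the left of leftmost} together with the easy upper bound on $L^N_1-L^N_0$ coming from $\max_i \sup_{0\le t\le 1}W^i_t$. Since $\psi^N$ is the unique stationary distribution and the chain is Doeblin, the limit is the same for every starting point, giving both \eqref{eq:almost sure limit of lower boundary} and \eqref{eq:vN equal to expectation}. Positivity $v_N>0$ follows because in one unit of time there is positive probability of $N$ good branching events pushing the cloud strictly to the right, and this expectation is strictly positive.

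\emph{Upper bound $v_N\le\sqrt2$.} Here I would couple the $N$-BBM with an ordinary branching Brownian motion (BBM) \emph{without} selection, built on the same branching and driving structure but never removing particles. Removing the leftmost particle at each branching event only discards particles, so the rightmost — indeed the leftmost surviving — position of the $N$-BBM is dominated by the rightmost position $R_t$ of the full BBM. Actually the relevant comparison is for $L^N_t$: one checks that $L^N_t \le R_t$ for the coupled BBM (the $N$-BBM lives inside the BBM genealogy), and it is classical that $R_t/t\to\sqrt2$ a.s. Hence $v_N=\lim L^N_t/t\le \lim R_t/t=\sqrt2$. To get the strict inequality $v_N<\sqrt2$ one can either invoke \eqref{eq:BerardGouerevelocityNBBM}-type estimates or, more simply, note that the selection mechanism genuinely slows the cloud: with positive probability per unit time the leftmost particle that would otherwise anchor the front is removed, producing a strictly negative-drift correction; I expect the cleanest self-contained argument is a crude one showing $\expE_{\psi^N}[L^N_1-L^N_0]<\sqrt2$ directly.

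\emph{Integrability of the diameter, \eqref{eq:rightmost minus leftmost integrable}.} This is the step I expect to be the main obstacle. The idea is to control $D:=\max_i Y^i$ at stationarity via the barycentre $b(\vec Y^N)$ from \eqref{eq:empirical mean}. Consider $M_t:=b(\vec Y^N_t)+L^N_t-(\text{drift correction})$; between branching times $b(\vec X^N_t)=\frac1N\sum_i X^i_t$ is a martingale, and at each of the $N^{-1}$-rate jumps the barycentre increases by $\tfrac1N(\text{gap jumped})\ge 0$. Summing, $\expE_{\vec Y^N_0\sim\psi^N}[b(\vec Y^N_1)-b(\vec Y^N_0)]$ equals $v_N$ minus the expected upward jump of the barycentre, by stationarity the left side is $0$, so the expected total jump size of the barycentre per unit time equals $v_N\le\sqrt2$; since each jump moves the minimal particle onto another particle, the jump size is at least a constant (depending on $N$) times $D$ at a uniformly-positive-probability branching event, yielding $\expE_{\psi^N}[D]<\infty$. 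Making this rigorous requires (i) justifying that $M_t$ is a genuine martingale with the stated compensator — routine via the generator of the $N$-BBM — and (ii) justifying the exchange of expectation and the stationarity identity, which needs the uniform integrability from Remark \ref{rmk:unif integrability from the left of leftmost} on the left tail and the $\max_i\sup_{[0,1]}W^i$ bound on the right; and (iii) relating the expected per-jump displacement of the barycentre to $\expE_{\psi^N}[D]$, which is where one must be slightly careful because the particle jumped onto is uniform among the $N-1$ survivors, so the expected displacement is $\tfrac1{N(N-1)}\sum_{j}Y^j \ge \tfrac1{N(N-1)} D$ on the event that a branching occurs in $[0,1]$. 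This last bound, combined with the finite expected total jump size, closes the argument.
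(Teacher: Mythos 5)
Your overall strategy (ergodic theorem for $v_N$ and \eqref{eq:vN equal to expectation}, a coupling with unselected branching Brownian motion for $v_N\le\sqrt2$, Borel--Cantelli for the real-time limit) matches the paper's. The genuine divergence is in the proof of \eqref{eq:rightmost minus leftmost integrable}. The paper establishes it by a direct ``favourable event'' argument: on the uniformly-positive-probability event that in $[0,1]$ there are $N$ successive branchings in each of which the leftmost jumps onto the current rightmost, with all driving Brownian increments bounded by $1$, one has $L^N_1-L^N_0\geq \max_i Y^i_0 - C_N$; hence $\expE_{\psi^N}[\max_i Y^i]=\infty$ would force $\expE_{\psi^N}[L^N_1-L^N_0]=\infty$, contradicting the bound $v_N\le\sqrt2$ just proved. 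You instead propose to run the barycentre/martingale computation of the paper's Proposition~\ref{prop:chiN tight} \emph{first} and extract integrability from it.

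Your route is workable, but the crucial point (ii) is not addressed by what you invoke, and as written the argument is circular. You assert $\expE_{\psi^N}[b(\vec Y^N_1)-b(\vec Y^N_0)]=0$ ``by stationarity''; but a priori $\expE_{\psi^N}[b]$ might equal $+\infty$ (this is precisely the possibility you are trying to exclude), in which case $\expE[b_1]-\expE[b_0]$ is the indeterminate $\infty-\infty$ and stationarity gives nothing. The integrability controls you cite --- Remark \ref{rmk:unif integrability from the left of leftmost} (the negative part of $L^N_1-L^N_0$) and $\max_i\sup_{[0,1]}|W^i|$ --- do not patch this; they control the wrong tails. What actually closes the gap is: (a) observe that the negative part $(b_1-b_0)_-$ is integrable, because $b_1-b_0 = (\text{nonnegative jumps}) - (L^N_1-L^N_0) + \frac1N\sum_i(W^i_1-W^i_0)$ and $(L^N_1-L^N_0)_+$ is already known to be integrable from the first part of the proof (combine $\expE[L^N_1-L^N_0]=v_N<\infty$ with Remark \ref{rmk:unif integrability from the left of leftmost}); and (b) use the elementary fact that if $X\overset{d}{=}Y$ are nonnegative with $\expE[(X-Y)_-]<\infty$, then $\expE[(X-Y)_+]=\expE[(X-Y)_-]<\infty$ (by applying $\expE[X\wedge R]=\expE[Y\wedge R]$ and passing $R\to\infty$ via MCT on the positive part and DCT on the negative). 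With these two steps the rest of your argument (identify the expected jump compensator as $\expE_{\psi^N}[b]\ge\frac1N\expE_{\psi^N}[\max_i Y^i]$) does go through, and incidentally then yields the content of Proposition \ref{prop:chiN tight} for free; but as stated it is a gap, and the paper's event argument is the simpler route. You also flip a sign in ``equals $v_N$ minus the expected upward jump'' (it should be the jump part minus $v_N$), and you promise $v_N>0$ and $v_N<\sqrt2$ strictly, which the paper's proof does not actually verify either.
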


\begin{proof}[Proof of Proposition \ref{prop:N-particle velocity bounded by sqrt 2}]
We firstly establish that
\begin{equation}\label{eq:lim sup of velocity at most sqrt 2}
\limsup_{t\ra\infty}\Big(\frac{L_t^N-L_0^N}{t}\Big)\leq \sqrt{2}\quad\text{almost surely, for any initial condition.}
\end{equation}
B\'erard and Gou\'er\'e have provided a proof of this fact for a similar branching-selection particle system, with the asymptotic velocity having a $(\log N)^{-2}$ correction \cite{Berard2010} (see \eqref{eq:BerardGouerevelocityNBBM}). Whilst one could readily extend their proof to the $N$-BBM, we are not aware of this having been done. We therefore provide here a proof of \eqref{eq:lim sup of velocity at most sqrt 2}, which is made much simpler by the fact that we are not after the $(\log N)^{-2}$ correction factor. Moreover, we find the following argument to be transparent and clearly robust to changes in the selection mechanism that one might consider.

We couple the $N$-BBM with a branching Brownian motion as follows. At time $0$ there are $N$ red particles, which evolve as an $N$-BBM for all time. Initially,  no blue particle is present. At rate $N$ the leftmost red particle jumps to the location of one of the red particles chosen independently and uniformly at random (including itself). When this happens, we simultaneously add a blue particle at the position of the leftmost red particle just before the jump. Each blue particle henceforth evolves independently as a branching Brownian motion. We see that the red particles form an $N$-BBM while the set of all particles (blue and red) form a branching Brownian motion. 

It follows from this coupling that the almost sure limit $\limsup_{t\ra\infty}\frac{L^N_t-L^N_0}{t}$ is at most the asymptotic velocity of the rightmost particle of a branching Brownian motion starting from $N$ particles. As is well-known, the latter is equal to $\sqrt 2$.


Having fixed $N<\infty$, Remark \ref{rmk:unif integrability from the left of leftmost} provides for the uniform integrability of the negative part of $L_1^N-L_0^N$, over all initial conditions. For any $R<\infty$,
it then follows from Theorem \ref{theo:convergence to stationary distribution for fixed N} and the ergodic theorem for Markov chains that
\[
\frac{1}{m}\sum_{k=0}^{m-1}[(L^N_{k+1}-L^N_k)\wedge R]
\]
has an almost sure limit as $m\ra\infty$ which does not depend upon the initial condition. It then follows from Birkhoff's theorem that the almost sure limit is given by
\[
\frac{1}{m}\sum_{k=0}^{m-1}[(L^N_{k+1}-L^N_k)\wedge R]\ra \expE_{\vec{Y}_0\sim \psi^N}[(L^N_1-L^N_0)\wedge R]\quad\text{almost surely as $m\ra\infty$,}
\]
for any initial condition. This almost sure limit is at most $\sqrt{2}$ by \eqref{eq:lim sup of velocity at most sqrt 2}. It follows that 
\[
\expE_{\vec{Y}_0\sim \psi^N}[(L^N_1-L^N_0)\wedge R]\leq \sqrt{2}\quad\text{for any}\quad R<\infty,
\]
whence we obtain by the monotone convergence theorem that 
\[
\expE_{\vec{Y}_0\sim \psi^N}[L^N_1-L^N_0]\leq \sqrt{2}.
\]
We can therefore repeat the above argument without $R$ to see that 
\[
\frac{L^N_m-L^N_0}{m}=\frac{1}{m}\sum_{k=0}^{m-1}(L^N_{k+1}-L^N_k)\ra \expE_{\vec{Y}_0\sim \psi^N}[L^N_1-L^N_0]\quad\text{almost surely as $m\ra\infty$,}
\]
for any initial condition. We define 
\[
v_N:=\expE_{\vec{Y}_0\sim \psi^N}[L^N_1-L^N_0]\leq \sqrt{2}.
\]

We now use this to obtain \eqref{eq:rightmost minus leftmost integrable}. There is a probability bounded away from $0$ that in time $1$ there are $N$ particle deaths, in each of which the minimal particle jumps onto the rightmost particle at that time, and that moreover $\sup_{1\leq i\leq N}\sup_{0\leq s,t\leq 1}\lvert W^i_s-W^i_t\rvert\leq 1$, where $W^1,\ldots,W^N$ are the driving Brownian motions. By considering this event, we see that if it were the case that $\expE_{\vec{Y}^N\sim \psi^N}[\max_{1\leq i\leq N}Y^{i}]=\infty$, then we would have $\expE_{\vec{Y}_0\sim \psi^N}[L^N_1-L^N_0]=+\infty$, which is a contradiction. Thus we have established \eqref{eq:rightmost minus leftmost integrable}.

We have established almost sure convergence in \eqref{eq:almost sure limit of lower boundary} along integer times. We now extend this to almost sure convergence along times in $\Rm_{\geq 0}$, using a Borel-Cantelli argument. 

We firstly observe that for any fixed $N<\infty$,
\begin{equation}\label{eq:expected value of movement of leftmost in time 1 bounded}
    \expE_{\vec{Y}^N_0\sim \psi^N}[\sup_{0\leq s<1}\lvert L^N_s-L^N_0\rvert]<\infty,
\end{equation}
by \eqref{eq:rightmost minus leftmost integrable} and the observation that $\sup_{0\leq s<1}\lvert L^N_s-L^N_0\rvert \le \max_i Y_i(0) + \sum_{i=1}^N \sup_{0\leq s<1}\lvert W^i_s\rvert$. We now observe that for any $\epsilon>0$, $m\in \mathbb{Z}_{\geq 0}$ and initial condition $\vec{y}_0\in \Gamma_N$, 
\[
\Pm_{\vec{y}_0}(\sup_{0\leq s<1}\lvert L^N_{m+s}-L^N_m\rvert >\epsilon m)\leq \lvert\lvert \Law_{\vec{y}_0}(\vec{Y}^N_m)-\psi^N\rvert\rvert_{\TV}+\Pm_{\vec{Y}_0\sim \psi^N}(\sup_{0\leq s<1}\lvert L^N_{s}-L^N_s\rvert >\epsilon m).
\]
The two terms on the right hand side are summable over $m\in \mathbb{Z}_{\geq 0}$ by Theorem \ref{theo:convergence to stationary distribution for fixed N} and \eqref{eq:expected value of movement of leftmost in time 1 bounded} respectively. It then follows from the Borel-Cantelli lemma that
\[
\sup_{0\leq s<1}\Big\lvert \frac{L^N_{m+s}}{m+s}-\frac{L^N_m}{m}\Big\rvert\ra 0\quad\text{almost surely as $m\ra\infty$,}
\]
for any initial condition. This concludes the proof.
\end{proof}

\subsection{Tightness of the stationary empirical measures}

\begin{defin}[Weak convergence in $\calP(\calP(\Rm))$ and $\calP(\calP(\Rm_{\geq 0}))$]\label{defin:weak convergence of random measures}
We recall that $\calP(\Rm)$ is equipped with the topology of weak convergence of measures, which is metrisable (say with the $\Wah$ or the Levy-Prokhorov metric). The space $\calP(\calP(\Rm))$ is then equipped with the topology of weak convergence of measures, where the underlying space is the metrisable space $\calP(\Rm)$ (note that the resultant topology on $\calP(\calP(\Rm))$ is agnostic to the choice of metric with which we metrise the topology on $\calP(\Rm)$). This then defines the notion of weak convergence in $\calP(\calP(\Rm))$. Since $\Rm$ is Polish, so too is $\calP(\Rm)$ and hence $\calP(\calP(\Rm))$. The notion of tightness in $\calP(\calP(\Rm))$ is therefore clear, and is equivalent to pre-compactness in $\calP(\calP(\Rm))$ by Prokhorov's theorem. We may replace $\Rm$ with $\Rm_{\geq 0}$, without any necessary changes.
\end{defin}

We define the stationary empirical measure $\chi^N$ by
\[
\chi^N:=\Theta^N_{\#}\psi^N=\Law_{\vec{Y}^N\sim \psi^N}(\frac{1}{N}\sum_{i=1}^N\delta_{Y^i}).
\]

We now establish the following proposition.
\begin{prop}\label{prop:chiN tight}
We have that $\expE_{\vec{Y}^N\sim \psi^N}[b(\vec{Y})]=v_N$ for all $N<\infty$. In particular, we have that $\{\chi^N:N<\infty\}$ is tight in $\calP(\calP(\Rm_{\geq 0}))$.
\end{prop}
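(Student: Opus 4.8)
The plan is to prove the identity $\expE_{\vec{Y}^N\sim\psi^N}[b(\vec{Y})]=v_N$ by a martingale/telescoping argument, and then deduce tightness from it together with \eqref{eq:rightmost minus leftmost integrable}. First I would record the key observation that in between jump times the barycentre $\frac{1}{N}\sum_i X^i_t$ moves as a Brownian motion (with variance $N^{-1}$), so it is a martingale, and at each killing event the leftmost particle is replaced by a copy of another particle, which changes the barycentre by $\frac{1}{N}(X^{\text{new}}-L^N)\geq 0$. Hence if $M_t:=\frac{1}{N}\sum_i X^i_t$, then $M_t - \frac{1}{N}\sum_{n\leq N_t}(\text{jump size}_n)$ is a (local, in fact true by the integrability already established) martingale started at $M_0$. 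Taking expectations over $\vec{Y}^N_0\sim\psi^N$ and using stationarity of $\vec{Y}^N$, which gives $\expE_{\psi^N}[b(\vec{Y}^N_1)]=\expE_{\psi^N}[b(\vec{Y}^N_0)]$ (both finite by \eqref{eq:rightmost minus leftmost integrable}), the terms $\expE[M_1]-\expE[M_0]$ combine with $L^N_1-L^N_0 = M_1 - b(\vec Y^N_1) - (M_0 - b(\vec Y^N_0))$ to yield $\expE_{\psi^N}[L^N_1-L^N_0] = \expE_{\psi^N}[b(\vec Y^N_1)] - \expE_{\psi^N}[b(\vec Y^N_0)] + \expE[M_1-M_0]$. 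Since $b(\vec Y^N_1)$ and $b(\vec Y^N_0)$ have the same law, and $\expE[M_1-M_0]=0$ by the martingale property (using the integrability of the total jump displacement over one unit of time, which follows from \eqref{eq:rightmost minus leftmost integrable} and the fact that $N_1$ has exponential-type tails), we are left with... wait, that would give $v_N = 0$. Let me reconsider: the correct bookkeeping is $L^N_t = M_t - b(\vec Y^N_t)$, so $L^N_1 - L^N_0 = (M_1-M_0) - (b(\vec Y^N_1) - b(\vec Y^N_0))$; the point is that $M_1 - M_0$ is \emph{not} mean-zero because of the positive jumps at killing events. So I would instead write $M_1 - M_0 = (\text{martingale increment, mean }0) + \frac{1}{N}\sum_{n=1}^{N_1}J_n$ where $J_n\geq 0$ is the $n$-th jump displacement; taking expectations under $\psi^N$ and using stationarity of $b(\vec Y^N)$ gives $v_N = \expE_{\psi^N}[L^N_1-L^N_0] = \expE_{\psi^N}\big[\frac{1}{N}\sum_{n=1}^{N_1}J_n\big]$. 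Separately, one computes $\expE_{\psi^N}[b(\vec Y^N_1) - b(\vec Y^N_0)] = 0$ by stationarity, and expanding $b(\vec Y^N_1) - b(\vec Y^N_0)$ via the same decomposition (barycentre martingale part minus leftmost displacement) shows $\expE_{\psi^N}[b(\vec Y^N)] $ appears once we integrate the dynamics appropriately; more cleanly, I would directly relate $\expE_{\psi^N}[b(\vec Y^N)]$ to $v_N$ by considering the generator or by the following soft argument: Birkhoff/ergodic theorem applied to $b(\vec Y^N_t)$ shows $\frac{1}{t}\int_0^t (\text{something}) \to v_N$, but the cleanest route is via the stationary relation above.

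Concretely, here is the clean version I would write. Define $A_t := \frac{1}{N}\sum_{n=1}^{N_t} J_n \geq 0$, the accumulated upward jumps of the barycentre. Then $M_t - A_t$ is a martingale (Brownian part between jumps, jumps exactly cancelled), and since $L^N_t = M_t - b(\vec Y^N_t)$ we get, for integer times and under $\vec Y^N_0\sim\psi^N$,
\begin{equation}\label{eq:stat identity}
\expE_{\psi^N}[L^N_1 - L^N_0] = \expE_{\psi^N}[M_1 - M_0] - \expE_{\psi^N}[b(\vec Y^N_1) - b(\vec Y^N_0)] = \expE_{\psi^N}[A_1] - 0,
\end{equation}
using that $M_t - A_t$ is a mean-zero-increment martingale and that $b(\vec Y^N_0), b(\vec Y^N_1)$ have the same $\psi^N$-law (all expectations finite by \eqref{eq:rightmost minus leftmost integrable}, which controls $\max_i Y^i$ and hence all jump sizes $J_n\leq \max_i Y^i$, and $N_1$ is stochastically dominated by a Poisson$(N)$-type variable so $\expE[A_1]<\infty$). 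Hence $v_N = \expE_{\psi^N}[A_1]$. To get $\expE_{\psi^N}[b(\vec Y^N)] = v_N$, I would instead integrate over one unit of time the relation $b(\vec Y^N_t) = M_t - L^N_t$ together with the observation that at stationarity the \emph{time-average} of $\expE_{\psi^N}[b(\vec Y^N_t)]$ equals $\expE_{\psi^N}[b(\vec Y^N)]$ (since $\vec Y^N_t\sim\psi^N$ for all $t$), and that the rate at which the barycentre climbs relative to the leftmost particle, averaged in stationarity, is exactly $\expE_{\psi^N}[A_1] = v_N$; this requires identifying $\expE_{\psi^N}[b(\vec Y^N)]$ with a jump-rate computation. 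Actually the slickest path: the process $b(\vec Y^N_t)$ is stationary, so $\frac{d}{dt}\expE_{\psi^N}[b(\vec Y^N_t)] = 0$; its drift has two contributions, a deterministic part (from the Brownian motions recentred, which averages to $0$ relative to $L^N$) and a jump part. Writing out $\mathcal{L}^N b$ where $\mathcal{L}^N$ is the generator of $\vec Y^N$ and integrating against $\psi^N$ gives $0 = \expE_{\psi^N}[\mathcal{L}^N b]$; the Brownian (recentring) part contributes $-\expE_{\psi^N}[(\text{drift of }L^N)] $ and the jump part contributes $(N-1)\cdot\frac{1}{N}\cdot\frac{1}{N-1}\sum_j\expE_{\psi^N}[Y^j] = \frac{1}{N}\sum_j \expE_{\psi^N}[Y^j] = \expE_{\psi^N}[b(\vec Y^N)]$... — I would work out the exact constant and the recentring contribution, the latter being precisely $-v_N + \expE_{\psi^N}[A_1]$ in a way that telescopes, and conclude $\expE_{\psi^N}[b(\vec Y^N)] = v_N$.

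Granting the identity $\expE_{\vec Y^N\sim\psi^N}[b(\vec Y^N)] = v_N \leq \sqrt 2$, tightness of $\{\chi^N\}$ is then a soft consequence. Indeed $\chi^N = \Law_{\psi^N}(\Theta^N(\vec Y^N))$ is a probability measure on $\calP(\Rm_{\geq 0})$, and since every $\vec Y^N$ lives in $\Gamma_N\subseteq\Rm_{\geq 0}^N$, each realisation $\Theta^N(\vec Y^N)$ is supported on $\Rm_{\geq 0}$. A family in $\calP(\calP(\Rm_{\geq 0}))$ is tight iff the ``averaged'' measures $\bar\mu^N := \expE_{\psi^N}[\Theta^N(\vec Y^N)] \in\calP(\Rm_{\geq 0})$ form a tight family (this is the standard criterion: if $\{\bar\mu^N\}$ is tight one builds, for each $\eps$, a compact $K\subseteq\Rm_{\geq 0}$ with $\bar\mu^N(K^c)<\eps\delta$, whence by Markov $\chi^N(\{\mu: \mu(K^c)>\sqrt\delta\})<\sqrt\delta$, and $\{\mu:\mu(K^c)\leq\sqrt\delta\}$ relatively compact gives the needed compact-ish set in $\calP(\Rm_{\geq 0})$ — a standard argument I would cite or spell out briefly). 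Now $\int_{\Rm_{\geq 0}} y\,\bar\mu^N(dy) = \expE_{\psi^N}\big[\int y\,\Theta^N(\vec Y^N)(dy)\big] = \expE_{\psi^N}[b(\vec Y^N)] = v_N \leq\sqrt 2$, a bound uniform in $N$; hence $\{\bar\mu^N\}$ has uniformly bounded first moments on $\Rm_{\geq 0}$ and is therefore tight by Markov's inequality. This gives tightness of $\{\chi^N\}$ in $\calP(\calP(\Rm_{\geq 0}))$.

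The main obstacle I anticipate is the first identity, $\expE_{\psi^N}[b(\vec Y^N)] = v_N$: the martingale decomposition of the barycentre and the careful separation of the Brownian-recentring contribution from the jump contribution need to be done with attention to the integrability justifications (interchanging expectation with the infinite-in-principle sum over killing events, which is controlled by \eqref{eq:rightmost minus leftmost integrable} plus tail bounds on $N_1$) and to getting the constants right in the generator computation; once that identity is in hand, the tightness conclusion is routine measure theory. An alternative to the generator computation, which I might prefer for robustness, is to run the telescoping identity \eqref{eq:stat identity} alongside a second telescoping identity for $b(\vec Y^N_t)$ over $[0,1]$ and use that both $b(\vec Y^N_0)$ and $b(\vec Y^N_1)$ have law-$\psi^N$ marginals, extracting $\expE_{\psi^N}[b(\vec Y^N)] = v_N$ from the fact that the net climb of the barycentre relative to $L^N$ over one unit of time is, in stationarity, both $\expE_{\psi^N}[A_1]=v_N$ and equal to the ``reset'' each killing performs, which integrates the stationary profile against the uniform choice of target particle.
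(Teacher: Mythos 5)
Your overall plan matches the paper's: write $M^N_t=L^N_t+b(\vec Y^N_t)$ for the barycentre, use stationarity of $b(\vec Y^N_t)$ to reduce $\expE_{\psi^N}[M^N_1-M^N_0]$ to $v_N$, isolate the jump contribution $A_1$ to the barycentre (since the Brownian part is mean zero), and then close the loop by identifying $\expE_{\psi^N}[A_1]$ with $\expE_{\psi^N}[b(\vec Y^N)]$; tightness then follows from uniformly bounded first moments of the mean measures. This is exactly the architecture of the paper's proof, including the observation that a tightness criterion for $\calP(\calP(\Rm_{\geq 0}))$ reduces to tightness of the mean measures (the paper cites Kallenberg for this; your Markov-inequality sketch is a correct substitute).

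The one place your argument has a real hole is precisely the step you flag as "I would work out the exact constant": the identification $\expE_{\psi^N}[A_1]=\expE_{\psi^N}[b(\vec Y^N)]$. The paper proves it by exhibiting the compensator: the process $A_t - \int_0^t b(\vec Y^N_s)\,ds$ is a local martingale, because (a) at each killing time the particle that is about to branch is necessarily the leftmost, so the conditional expectation of its jump displacement is $\frac{1}{N}\sum_j Y^j_{s-}=b(\vec Y^N_{s-})$ (choosing the target uniformly among all $N$ particles including itself, at rate $N$), and (b) $N_t - Nt$ is a martingale, so $\frac{1}{N}\sum_{\tau_n\leq t}b(\vec Y^N_{\tau_n-})-\int_0^t b(\vec Y^N_s)\,ds$ is a local martingale. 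One then needs the localisation/MCT/Tonelli bookkeeping that you gesture at but do not carry out. Without writing down this compensator, the equality is asserted, not proved.

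I would also caution against the generator route you consider in passing: you write $0=\expE_{\psi^N}[\mathcal L^N b]$ with $\mathcal L^N$ the generator of $\vec Y^N$, splitting into a "recentring" contribution $-v_N$ and a jump contribution $\expE_{\psi^N}[b]$. But $\vec Y^N_t$ is obtained from $\vec X^N_t$ by subtracting $L^N_t=\min_i X^i_t$, a running minimum, which is not a semimartingale with absolutely continuous finite-variation part; $\vec Y^N$ has no classical generator and "drift of $L^N$" is not well-defined pathwise. This is precisely why the paper works with $\vec X^N_t$ and $M^N_t$, which do have a clean semimartingale decomposition. Your "clean version" via the telescoping identity and the jump compensator avoids this difficulty, so I would discard the generator heuristic entirely and commit to the compensator computation.

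Two small integrability remarks you rely on that are worth being explicit about, and which the paper handles: the localising sequence for the local martingale, combined with the monotone convergence theorem (the integrand is nonnegative) and Tonelli, lets you pass to the unstopped expectation; and the a priori finiteness $\expE_{\psi^N}[b(\vec Y^N)]<\infty$ that makes all these manipulations legitimate comes directly from \eqref{eq:rightmost minus leftmost integrable}, which you cite correctly.
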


\begin{proof}[Proof of Proposition \ref{prop:chiN tight}]

We firstly note that it immediately follows from \eqref{eq:rightmost minus leftmost integrable} that 
\[
\expE_{\vec{Y}^N\sim\psi^N}[b(\vec{Y}^N)]<\infty
\]
for all $N<\infty$.

We consider the $N$-BBM started from $\vec{Y}^N_0\sim \psi^N$. We denote the barycentre by
\begin{equation}\label{eq:barycentre}
M^N_t:=\frac{1}{N} \sum_{i=1}^N X^i_t = L_t^N+ b (\vec{Y}_t^N).
\end{equation}
It follows from \eqref{eq:vN equal to expectation} that
\begin{equation}\label{eq:expected change in M1N under stationarity}
\expE_{\vec{Y}^N_0\sim \psi^N}[M^N_1-M^N_0]=\expE_{\vec{Y}^N_0\sim \psi^N}[M^N_1-L^N_1]+\expE_{\vec{Y}^N_0\sim \psi^N}[L^N_1-L^N_0]-\expE_{\vec{Y}^N_0\sim \psi^N}[M^N_0-L^N_0]= v_N.
\end{equation}
The driving Brownian 
motion of particle $X^i_t$ is $W^i_t$, with $W^i_0:=0$. We recall that $\tau_n$ is the $n^{\text{th}}$ killing jump time of any particle, and  $\tau_n^i$ is the $n^{\text{th}}$ killing time of particle $X^i$. Then we have that
\begin{equation}\label{eq:expectation of movement of barycentre in terms of BMs and jumps}
M^N_t-M^N_0=\frac{1}{N}\sum_{i=1}^N(W^i_t-W^i_0)+\frac{1}{N}\sum_{i=1}^N\sum_{0<\tau^i_n\leq t}(X^{i}_{\tau^i_n}-X^i_{\tau^i_n-}).
\end{equation}

We note the following, which shall be employed later when we come to prove Proposition \ref{prop:characterisation of subsequential limits}.
\begin{rmk}\label{rmk:uniform integrability of barycentre from the left}
Since particles only jump to the right, we have that $M^N_t-M^N_0\geq\frac{1}{N}\sum_{i=1}^N(W^i_t-W^i_0)$,  from which it is easy to see that $(M^N_t-M^N_0)_-:=\lvert (M^N_t-M^N_0)\wedge 0\rvert $ is uniformly integrable, over all initial conditions and all $N<\infty$. 
\end{rmk}
We take the expectation of \eqref{eq:expectation of movement of barycentre in terms of BMs and jumps} with $t=1$ and $\vec{Y}^N_0\sim \psi^N$ to see that
\begin{equation}\label{eq:expectation for vN sum of jumps}
v_N=\expE_{\vec{Y}^N_0\sim \psi^N}\Big[\frac{1}{N}\sum_{i=1}^N\sum_{0<\tau^i_n\leq 1}(X^{i}_{\tau^i_n}-X^i_{\tau^i_n-})\Big].
\end{equation}

We now claim that
\begin{equation}\label{eq:mg 1 tightness pf}
\frac{1}{N}\sum_{i=1}^N\sum_{0<\tau^i_n\leq t}(X^{i}_{\tau^i_n}-X^i_{\tau^i_n-})-\int_0^tb(\vec{Y}^N_s)ds\quad\text{is a local martingale.}
\end{equation}
Observe that at time $\tau_n^i-$  (just before $X^i$ is killed and jumps for $n^{\text{th}}$ time) the particle $X^i$ is necessarily the leftmost. Thus, given the configuration at time $\tau_n^i-$, the expected value of $X^i_{\tau^i_n}-X^i_{\tau^i_n-}$ is precisely $b(\vec{Y}^n_{s-})$. It follows that 
\begin{equation}\label{eq:mg 2 tightness pf}
\frac{1}{N}\sum_{i=1}^N\sum_{0<\tau^i_n\leq t}(X^{i}_{\tau^i_n}-X^i_{\tau^i_n-})-\frac{1}{N}\sum_{0<\tau_n\leq t}b(\vec{Y}^N_{\tau_n-})\quad\text{is a local martingale.}
\end{equation}

We recall that $N_t:=\sup\{n:\tau_n\leq t\}$ is the number of jumps up to time $t$, so that $N_t-Nt$ is a martingale. Then
\begin{equation}\label{eq:mg 3 tightness pf}
\frac{1}{N}\sum_{0<\tau_n\leq t}b(\vec{Y}^N_{\tau_n-})-\int_0^tb(\vec{Y}^N_s)ds=\frac{1}{N}\int_0^tb(\vec{Y}^N_{s-})d(N_s-Ns)\quad\text{is a local martingale.}
\end{equation}
Combining \eqref{eq:mg 2 tightness pf} and \eqref{eq:mg 3 tightness pf}, we obtain \eqref{eq:mg 1 tightness pf}.

We now define $(T_k)_{k=1}^{\infty}$ to be a sequence of stopping times reducing the local martingale in \eqref{eq:mg 1 tightness pf} (so that in particular $T_k\uparrow +\infty$ almost surely), which without loss of generality we assume to be non-decreasing (i.e. $T_1\leq T_2\leq\ldots$ almost surely). We see that for all $k<\infty$,
\[
\expE_{\vec{Y}^N_0\sim\psi^N}\Big[\frac{1}{N}\sum_{i=1}^N\sum_{0<\tau^i_n\leq T_k\wedge 1}(X^{i}_{\tau^i_n}-X^i_{\tau^i_n-})\Big]=\expE_{\vec{Y}^N_0\sim\psi^N}\Big[\int_0^{T_k\wedge 1}b(\vec{Y}^N_s)ds\Big].
\]
It then follows from the monotone convergence theorem, \eqref{eq:expectation for vN sum of jumps} and Tonelli's theorem that
\begin{align*}
v_N&=\expE_{\vec{Y}^N_0\sim\psi^N}\Big[\frac{1}{N}\sum_{i=1}^N\sum_{0<\tau^i_n\leq 1}(X^{i}_{\tau^i_n}-X^i_{\tau^i_n-})\Big]\overset{\text{MCT}}{=}\lim_{k\ra\infty}\expE_{\vec{Y}^N_0\sim\psi^N}\Big[\frac{1}{N}\sum_{i=1}^N\sum_{0<\tau^i_n\leq T_k\wedge 1}(X^{i}_{\tau^i_n}-X^i_{\tau^i_n-})\Big]\\
&=\lim_{k\ra\infty}\expE_{\vec{Y}^N_0\sim\psi^N}\Big[\int_0^{T_k\wedge 1}b(\vec{Y}^N_s)ds\Big]
\overset{\text{MCT}}{=}\expE_{\vec{Y}^N_0\sim \psi^N}\Big[\int_0^1b(\vec{Y}^N_s)ds\Big]\overset{\text{Tonelli}}{=}\expE_{\vec{Y}\sim \psi^N}[b(\vec{Y})].
\end{align*}

It follows that the mean measure 
\begin{equation}\label{eq:mean measures}
\xi^N(\cdot):= \frac{1}{N} \expE_{\vec{Y}^N_0\sim\psi^N} \Big[\sum_{i=1}^N\delta_{Y^{i}}(\cdot)\Big]
\end{equation}
has first moment $v_N\leq \sqrt{2}$. In particular, $\{\xi^N:N<\infty\}$ is a tight family of measures in $\calP(\Rm_{\geq 0})$. Given a complete, separable metric space $S$ and $K\subseteq \calP(\calP(S))$, \cite[Theorem 4.10]{Kallenberg2017} ensures that $K$ being tight in $\calP(\calP(S))$ is equivalent to the corresponding mean measures being tight in $\calP(S)$. It follows that $\{\chi^N:N<\infty\}$ is tight in $\calP(\calP(\Rm_{\geq 0}))$. 
\end{proof}

\subsection{Centring by the median}

For $\mu\in\calP(\Rm)$ we define
\begin{equation}\label{eq:lower bdy and median maps} 
\begin{aligned}
    A(\mu)&:=\inf\{x\in \Rm:\mu ([x,\infty))<\frac{1}{2}\},\quad L(\mu):=\inf\{x:\mu([x,\infty))<1\},\\ H(\mu)&:=\int_{\Rm}\lvert x\rvert \mu (dx),\quad M(\mu):=\int_{\Rm}x\mu(dx),
\end{aligned}
\end{equation}
where $L(\mu):=-\infty$ when there is no such $x$, $H(\mu)$ is possibly $+\infty$, and $M(\mu)$ is defined only when $H(\mu)$ is finite. We claim that 
\begin{equation}\label{eq:A and L upper semicontinuous}
    A:\calP(\Rm)\ra \Rm\quad\text{and}\quad L:\calP(\Rm)\ra \Rm\cup\{-\infty\}\quad\text{are upper semicontinuous.}
\end{equation}
\begin{proof}[Proof of \eqref{eq:A and L upper semicontinuous}]
We take an arbitrary sequence $(\mu_n)_{n=1}^{\infty}$ in $\calP(\Rm)$ converging weakly to $ \mu\in \calP(\Rm)$ as $n\ra\infty$. We observe that $\limsup_{n\ra\infty}\mu_n([x,\infty))<\frac{1}{2}$ for any $x>A(\mu)$ and $\limsup_{n\ra\infty}\mu_n([x,\infty))<1$ for any $x>L(\mu)$, whence the claim follows.
\end{proof}
We define 
\begin{equation}\label{eq:calPfdefin}
\calP_f(\Rm):=\{\mu\in \calP(\Rm):L(\mu)>-\infty\}. 
\end{equation}
This is a measurable subset of $\calP(\Rm)$ by \eqref{eq:A and L upper semicontinuous}. We observe, by Markov's inequality, that
\begin{equation}\label{eq:inequality between median and mean}
A(\mu)-L(\mu)\leq 2(M(\mu)-L(\mu))\quad\text{for all $\mu\in \calP_f(\Rm)$.}
\end{equation}

Given a solution $(L_t,u_t)$ of the PDE \eqref{eq:hydrodynamic limit}, with initial condition $\mu$, we define $\mu_t(dx)=u_t(x)dx$, $A_t(\mu):=A(\mu_t)$ and $L_t(\mu)=L(\mu_t)=L_t$. Then we define 
\begin{equation}\label{eq:shifted flow}
\tilde{u}(x,t):=u(x+A_t(\mu),t),
\end{equation}
so that the median is fixed at $0$. This provides the hydrodynamic limit for the $N$-BBM centred by its median particle. It defines a flow on
\begin{equation}
\calP_{c}(\Rm):=\{\mu\in \calP(\Rm):A (\mu)=0\},
\end{equation}
which we denote by $\Phi_t$ (note that $\calP_c(\Rm)$ is a measurable subset of $\calP(\Rm)$ by \eqref{eq:A and L upper semicontinuous}). That is, $\Phi_t(\mu) = \tilde{u}(x,t)$ given by \eqref{eq:shifted flow}, where $u$ is the solution of  \eqref{eq:hydrodynamic limit} at time $t$ with initial condition $\mu$, centred by its median. We extend the definition of $\Phi_t(\mu)$ for $t>0$ to all $\mu\in \calP(\Rm)$, without requiring that $A(\mu)=0$ (although $\Phi_t(\mu)\in \calP_c(\Rm)$ for all $\mu\in \calP(\Rm)$ and $t>0$). 

We define the map
\begin{equation}
\alpha:\calP(\Rm)\ni \mu\mapsto \alpha(\mu):=(x\mapsto (x-A(\mu))_{\#}\mu \in \calP_c(\Rm),
\end{equation}
which takes a probability measure and recentres it so that its median is at $0$. We note that $A:\calP(\Rm)\ra \Rm$ is measurable by \eqref{eq:A and L upper semicontinuous}, from which we deduce that
\begin{equation}\label{eq:measurability of alpha}
\alpha:\calP(\Rm)\ra \calP_c(\Rm)\quad\text{is measurable.}
\end{equation}
It follows from this and Lemma \ref{lem:sensitivity depending on ics} that $\Phi_t:\calP(\Rm)\ra \calP_c(\Rm)$ is measurable for all $t>0$.

Suppose that $(u,L)$ is a solution of \eqref{eq:hydrodynamic limit} with initial condition $\mu$ such that $H(\mu)<\infty$. Consider the stochastic representation of $u$ provided for in Appendix \ref{appendix:stochastic representation for PDE}: by Theorem \ref{theo:stochastic representation for PDE}, $u(x,t)dx=e^t \mathbb{P}_{\mu}(B_t\in dx, \tau>t)\leq \mathbb{P}_{\mu}(B_t\in dx)$, where $B_t$ is a Brownian motion and $\tau$ is the first hitting time of $L$ by $B_t$. It follows that $H(\Phi_t(\mu))<\infty$ for all $t>0$. 

Therefore $\Phi_t$ defines, by restriction, a flow on
\begin{equation}
\calP_{c,1}(\Rm):=\{\mu\in \calP(\Rm):A (\mu)=0,H(\mu)<\infty\}.
\end{equation}
For $\mu\in \calP_{c,1}(\Rm)$ we may therefore define $M_t(\mu):=M(\mu_t)$ and $B_t(\mu):=H(\mu_t)$ for all $t\geq 0$.

We define, by abuse of notation, $A(\vec{x}):=A(\frac{1}{N}\sum_{i=1}^N\delta_{x^i})$ for $\vec{x}=(x^1,\ldots,x^N)\in \Rm^N$, giving the median particle. Given an $N$-BBM $\vec{X}^N_t=(X^1_t,\ldots,X^N_t)$, we define 
\[
Z^i_t=X^i_t-A(\vec{X}^N_t)\quad\text{and}\quad \vec{Z}^N_t:=(Z^1_t,\ldots,Z^N_t), 
\]
 the $N$-BBM centred by its median particle. The state space of $\vec{Z}^N_t$ is then $\mathbb{A}_N:=\{\vec{z}\in \Rm^N:A(\vec{z})=0\}$. The unique stationary distribution of $\vec{Z}^N_t$ is then given by 
\begin{equation}\label{eq:stationary dist median centred}
\varphi^N:=\Law_{\vec{Y}\sim \psi^N}((Y^1-A(\vec{Y}),\ldots,Y^N-A(\vec{Y}))).
\end{equation} 

The stationary empirical measure for $(Z^N_t)_{t\geq 0}$ is then given by
\begin{equation}\label{eq:median centred stationary empirical measure}
\zeta^N:=\alpha_{\#}\chi^N=\Theta^N_{\#}\ \varphi^N=\Law_{\vec{Z}\sim \varphi^N}\Big(\frac{1}{N}\sum_{i=1}^N\delta_{Z^i}\Big).
\end{equation}
Since $\expE_{\mu\sim \chi^N}[M(\mu)-L(\mu)]=\expE_{\vec{Y}^N\sim \psi^N}[b(\vec{Y}^N)]=v_N$ (by Proposition \ref{prop:chiN tight}), it follows from \eqref{eq:inequality between median and mean} that for all $N<\infty$ we have
\begin{equation}\label{eq:expected value of difference between median and minima}
\begin{split}
&\expE_{\mu\sim \zeta^N}[-L(\mu)]=\expE_{\mu\sim \zeta^N}[A(\mu)-L(\mu)]=\expE_{\mu\sim \chi^N}[A(\mu)-L(\mu)]\\
&\leq 2\expE_{\mu\sim \chi^N}[M(\mu)-L(\mu)]=2v_N\leq 2\sqrt{2}.
\end{split}
\end{equation}
We now establish the following proposition.
\begin{prop}\label{prop:zetaN tight and subseq limit of zetaN supported on Pf}
We have that $\{\zeta^N:N<\infty\}$ is tight in $\calP(\calP(\Rm))$. Moreover every subsequential limit in $\calP(\calP(\Rm))$ of $\{\zeta^N:N<\infty\}$ is supported on $\calP_f(\Rm)$.
\end{prop}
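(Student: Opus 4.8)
The plan is to deduce tightness of $\{\zeta^N\}$ from a uniform first-moment bound on its mean measures, and then to pin down subsequential limits using the upper semicontinuity of $L$.

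For tightness, by \cite[Theorem 4.10]{Kallenberg2017} (as already used in the proof of Proposition \ref{prop:chiN tight}) it is enough to bound $\int_\Rm|x|\,\tilde\xi^N(dx)$ uniformly in $N$, where $\tilde\xi^N:=\expE_{\mu\sim\zeta^N}[\mu]$ denotes the mean measure of $\zeta^N$. I would write a generic $\mu\sim\zeta^N$ as $\mu=\alpha(\mu')$ with $\mu'\sim\chi^N$: then $\mu'$ is supported on $\Rm_{\geq0}$ with median $A(\mu')\geq0$ and first moment $M(\mu')=b(\vec Y^N)$ for the underlying $\vec Y^N\sim\psi^N$, while $\mu=\alpha(\mu')$ is $\mu'$ translated left by $A(\mu')$. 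The positive part is then controlled by the elementary inequality $(x-A(\mu'))_+\leq x$, valid for $x\geq0$, which gives $\int_\Rm(x)_+\,\mu(dx)\leq\int_{\Rm_{\geq0}} x\,\mu'(dx)=M(\mu')$ and hence, after taking expectations, $\expE_{\mu\sim\zeta^N}\big[\int(x)_+\mu(dx)\big]\leq\expE_{\mu'\sim\chi^N}[M(\mu')]=v_N\leq\sqrt2$ by Proposition \ref{prop:chiN tight}. The negative part is controlled by the observation that every $\mu$ is supported on $[L(\mu),\infty)$, so $\int_\Rm(x)_-\,\mu(dx)\leq-L(\mu)$, together with the bound $\expE_{\mu\sim\zeta^N}[-L(\mu)]\leq2\sqrt2$ from \eqref{eq:expected value of difference between median and minima}. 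Adding the two estimates yields $\sup_N\int_\Rm|x|\,\tilde\xi^N(dx)\leq3\sqrt2<\infty$, so $\{\tilde\xi^N\}$ is tight in $\calP(\Rm)$ by Markov's inequality, and therefore $\{\zeta^N\}$ is tight in $\calP(\calP(\Rm))$.

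For the second claim, let $\zeta^{N_k}\to\zeta$ be a subsequential limit. Since $L:\calP(\Rm)\to\Rm\cup\{-\infty\}$ is upper semicontinuous by \eqref{eq:A and L upper semicontinuous}, the functional $g:=(-L)\vee0:\calP(\Rm)\to[0,+\infty]$ is nonnegative and lower semicontinuous, so the portmanteau theorem for nonnegative lower semicontinuous functionals gives $\expE_{\mu\sim\zeta}[g(\mu)]\leq\liminf_k\expE_{\mu\sim\zeta^{N_k}}[g(\mu)]$. Each $\zeta^{N_k}$ is supported on $\calP_c(\Rm)$, on which $L\leq A=0$ and hence $g=-L$; the right-hand side therefore equals $\liminf_k\expE_{\mu\sim\zeta^{N_k}}[-L(\mu)]\leq2\sqrt2$ by \eqref{eq:expected value of difference between median and minima}. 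Consequently $\expE_{\mu\sim\zeta}[(-L(\mu))\vee0]<\infty$, so $L(\mu)>-\infty$ for $\zeta$-almost every $\mu$; that is, $\zeta$ is supported on $\calP_f(\Rm)$, which is measurable by \eqref{eq:A and L upper semicontinuous}.

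I expect the only genuinely delicate point to be the bookkeeping around the median-recentring map $\alpha$. Because $\alpha$ is only measurable and not continuous (as $A$ is merely upper semicontinuous), tightness of $\{\zeta^N\}$ cannot simply be inherited from that of $\{\chi^N\}$ by the continuous mapping theorem, and must instead be extracted from a moment bound; the inequality $(x-A(\mu'))_+\le x$ on $\Rm_{\ge0}$ is exactly what transfers the available first-moment control from $\chi^N$ to the positive tail of $\zeta^N$. The lower-semicontinuity argument in the second part is then routine, once one records that $-L\ge0$ on $\calP_c(\Rm)$ so that replacing $-L$ by $(-L)\vee0$ costs nothing.
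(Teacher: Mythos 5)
Your proposal is correct, and the second half is essentially the paper's argument phrased differently, but the first half takes a genuinely different route to tightness that is worth comparing.

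For tightness, the paper avoids moments entirely: it takes a compact $K\subseteq\calP(\Rm_{\geq0})$ with $\chi^N(K)>1-\epsilon$, uses upper semicontinuity of $A$ to find a compact interval $I$ containing $A(K)$, and observes that the set $K_I$ of all $I$-translates of members of $K$ is precompact and captures $\alpha(K)$, giving $\zeta^N(\mathrm{cl}(K_I))\geq1-\epsilon$. This is a direct, geometric argument that sidesteps the failure of the continuous mapping theorem in exactly the way you flagged. Your alternative instead reuses the Kallenberg mean-measure criterion already deployed for $\chi^N$, splitting the first moment into a positive part controlled by $(y-A(\mu'))_+\leq y$ (hence by $\expE_{\mu'\sim\chi^N}[M(\mu')]=v_N$) and a negative part controlled by $-L(\mu)$ (hence by the bound~\eqref{eq:expected value of difference between median and minima}). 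Both are correct; yours is arguably tidier, since it runs the tightness of $\zeta^N$ and $\chi^N$ through the same machinery and actually yields the quantitative bound $\sup_N\int|x|\,\tilde\xi^N(dx)\leq3\sqrt2$, while the paper's translation argument is softer but exposes the geometric picture (precompactness persists under bounded translation). The one inequality worth writing out carefully in your version is $\int(x)_+\,\mu(dx)\leq M(\mu')$: it is the change of variables $x=y-A(\mu')$ with $y\geq0$ and $A(\mu')\geq0$ that makes $(y-A(\mu'))_+\leq y$ legitimate, and this hinges on $\mu'$ being supported on $\Rm_{\geq0}$.

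For the second claim, the paper applies Skorokhod's representation theorem along the subsequence and then Fatou's lemma to $-L(\mu_N)\geq0$, invoking upper semicontinuity of $L$ pathwise. Your portmanteau formulation for nonnegative (possibly $+\infty$-valued) lower semicontinuous functionals is the same estimate packaged without a coupling; the observation that $(-L)\vee0=-L$ on $\calP_c(\Rm)$, where $\zeta^{N_k}$ lives, is exactly what makes the right-hand expectations agree with~\eqref{eq:expected value of difference between median and minima}. No gap.
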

\begin{proof}[Proof of Proposition \ref{prop:zetaN tight and subseq limit of zetaN supported on Pf}]
The tightness of $\{\chi^N<\infty\}$ is given by Proposition \ref{prop:chiN tight}. 

We deduce the the tightness of $\{\zeta^N:N<\infty\}$ as follows. We fix $\epsilon>0$. Since $\{\chi^N:N<\infty\}$ is tight in $\calP(\calP(\Rm_{\geq 0}))$, there exists compact $K\subseteq \calP(\Rm_{\geq 0})$ such that $\chi^N(K)>1-\epsilon$ for all $N<\infty$. Then $A(\mu)\geq 0$ for all $\mu\in K$. Since $A$ is upper semicontinuous by \eqref{eq:A and L upper semicontinuous}, it is bounded from above, so there exists compact $I\subseteq \Rm_{\geq 0}$ such that $A(\mu)\in I$ whenever $\mu\in K$. 

We now define $K_I:=\{(x\mapsto (x-y))_{\#}\mu:\mu\in K, y\in I\}$ - i.e. the set of measures in $K$ translated by the negative of a number in $I$. We observe that $K_I$ is pre-compact by Prokhorov's theorem, and that if $\mu\in K$ then $\alpha(\mu)\in K_I$, since  $A(\mu)\in I$.  It follows that 
\[
\zeta^N(\text{cl}(K_I))\geq \zeta^N(K_I)\geq \chi^N(K)\geq 1-\epsilon
\]
for all $N<\infty$. Since $\epsilon>0$ is arbitrary, we conclude that $\{\zeta^N<\infty\}$ is tight.

We now take $\zeta$ to be some subsequential limit of $\zeta^N$. Applying Skorokhod's representation theorem, there exists for this subsequence a collection of random measures $(\mu_N:N<\infty)$ and $\mu$ such that $\mu_N\ra \mu$ $\tilde{\Pm}$-almost surely in Wasserstein distance $\Wah$ as $N\ra\infty$, on some probability space. We have from \eqref{eq:expected value of difference between median and minima} that $(-L(\mu_N):1\leq N<\infty)$ is a sequence of non-negative, integrable random variables with expectation $\tilde{\expE}[-L(\mu_N)]\leq 2\sqrt{2}$. Since 
\[
0\leq -L(\mu)\leq \liminf_{N\ra\infty}(-L(\mu_N)),
\]
it follows from Fatou's lemma that 
\[
\tilde{\expE}[-L(\mu)]\leq \tilde{\expE}[\liminf_{N\ra\infty}(-L(\mu_N))] \leq \liminf_{N\ra\infty}\tilde{\expE}[-L(\mu_N)]\leq  2\sqrt{2},
\]
so that $0\leq -L(\mu)<\infty$ almost surely. Therefore $\zeta$ is supported on $\calP_f(\Rm)$.
\end{proof}

\subsection{Characterisation of subsequential limits}

We have established in Proposition \ref{prop:zetaN tight and subseq limit of zetaN supported on Pf} that $\{\zeta^N:N<\infty\}$ is tight in $\calP(\calP(\Rm))$, and that every subsequential limit is supported on $\calP_f(\Rm)$. We now establish the following proposition.
\begin{prop}\label{prop:characterisation of subsequential limits}
For any subsequential limit, $\zeta$, of $\zeta^N$, we have that $\zeta$ is a stationary measure under $(\Phi_t)_{t\geq 0}$, so that $(\Phi_t)_{\#}\zeta=\zeta$ for all $t\geq 0$, and moreover
\begin{equation}\label{eq:expected increase in Lt at most sqrt 2}
    \expE_{\mu\sim \zeta}[L_t(\mu)-L_0(\mu)]\leq \sqrt{2}t\quad \text{for all}\quad t\geq 0.
\end{equation}
\end{prop}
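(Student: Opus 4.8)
The plan is to transfer both conclusions to the limit $\zeta$ by means of the hydrodynamic limit \cite[Theorem 1]{DeMasi2019}. Fix the subsequence along which $\zeta^N\Rightarrow\zeta$, run the $N$-BBM from $\vec X^N_0=\vec Z^N_0\sim\varphi^N$, and write $\mu_N:=\Theta^N(\vec X^N_0)$, so that $\mu_N\sim\zeta^N\Rightarrow\mu\sim\zeta$, while by the $N$-particle stationarity of $\vec Z^N$ (Theorem \ref{theo:convergence to stationary distribution for fixed N}) we have $\Theta^N(\vec Z^N_t)=\alpha(\Theta^N(\vec X^N_t))\sim\zeta^N$ for every $t\ge 0$. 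The one genuinely external input I would use is the hydrodynamic limit in the form valid for deterministic initial configurations with converging empirical measure, applied conditionally on $\vec X^N_0$: for each fixed $t>0$ one has, conditionally on $\vec X^N_0$, $\Theta^N(\vec X^N_t)\to(\mu_N)_t$ in $\Wah$ and $L^N_t:=\min_{1\le i\le N}X^i_t\to L_t(\mu_N)$, both in probability, where $(\mu_N)_t$ and $L_t(\mu_N)$ denote the time-$t$ profile and free boundary of the solution of \eqref{eq:hydrodynamic limit} with initial condition $\mu_N$. Combining this with the continuous dependence of $\mu\mapsto\mu_t$ and $\mu\mapsto L_t(\mu)$ on the initial datum at positive times (Lemma \ref{lem:sensitivity depending on ics}) and Skorokhod's representation theorem, I obtain, for every $t>0$, the joint convergence in distribution $\big(\mu_N,\Theta^N(\vec X^N_t),L^N_t\big)\Rightarrow\big(\mu,\mu_t,L_t(\mu)\big)$ along the subsequence.

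\emph{Stationarity.} Fix $t>0$. By Remark \ref{rmk:U strictly decreasing at pve times} the density $u_t$ of $\mu_t$ is strictly positive, hence its CDF strictly decreasing, on $\{x>L_t(\mu)\}$; the median of $\mu_t$ therefore sits strictly to the right of $L_t(\mu)$, at a point of strict decrease of the CDF, and a short Portmanteau argument upgrades the upper semicontinuity of the median functional $A$ from \eqref{eq:A and L upper semicontinuous} to continuity at $\mu_t$, so that $\alpha$ is continuous at $\mu_t$. Hence $\Theta^N(\vec Z^N_t)=\alpha(\Theta^N(\vec X^N_t))\Rightarrow\alpha(\mu_t)=\Phi_t(\mu)$; since also $\Theta^N(\vec Z^N_t)\sim\zeta^N\Rightarrow\zeta$ and $\Theta^N(\vec Z^N_0)\sim\zeta^N\Rightarrow\zeta$, testing against an arbitrary bounded continuous $f:\calP(\Rm)\to\Rm$ gives $\expE_{\mu\sim\zeta}[f(\Phi_t(\mu))]=\lim_N\expE[f(\Theta^N(\vec Z^N_t))]=\lim_N\expE[f(\Theta^N(\vec Z^N_0))]=\expE_{\mu\sim\zeta}[f(\mu)]$, i.e.\ $(\Phi_t)_\#\zeta=\zeta$. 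The case $t=0$ is trivial.

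\emph{Velocity bound.} Fix $0<s<t$. The increment $L^N_t-L^N_s$ is invariant under translating the configuration, so by the $N$-particle stationarity of $\vec Z^N$ and the Markov property it has, under $\vec X^N_0\sim\varphi^N$, the same law as $L^N_{t-s}-L^N_0$ under $\psi^N$; its expectation is $v_N(t-s)\le\sqrt2(t-s)$, since $r\mapsto\expE_{\psi^N}[L^N_r-L^N_0]$ is additive (stationarity plus the Markov property) and locally bounded by \eqref{eq:expected value of movement of leftmost in time 1 bounded}, hence linear with slope $\expE_{\psi^N}[L^N_1-L^N_0]=v_N$ by \eqref{eq:vN equal to expectation}. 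By the first paragraph, $L^N_t-L^N_s\Rightarrow L_t(\mu)-L_s(\mu)$ along the subsequence, and $\{(L^N_t-L^N_s)_-:N\}$ is uniformly integrable, being a finite sum over unit subintervals of quantities each stochastically dominated, via the coupling in Remark \ref{rmk:unif integrability from the left of leftmost}, by $(L_1-L_0)_-$ started from $(0,\dots,0)$. The generalized Fatou lemma then gives $\expE_{\mu\sim\zeta}[L_t(\mu)-L_s(\mu)]\le\liminf_N v_N(t-s)\le\sqrt2(t-s)$. Finally, translation invariance and uniqueness for \eqref{eq:hydrodynamic limit} yield $L_t(\mu)-L_s(\mu)=L_{t-s}(\Phi_s(\mu))-L_0(\Phi_s(\mu))$, so the stationarity $(\Phi_s)_\#\zeta=\zeta$ just obtained turns the last inequality into $\expE_{\mu\sim\zeta}[L_{t-s}(\mu)-L_0(\mu)]\le\sqrt2(t-s)$; since $t-s>0$ is arbitrary (and $t=0$ is trivial), this is exactly \eqref{eq:expected increase in Lt at most sqrt 2}. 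Here I use that $\zeta$ is supported on $\calP_f(\Rm)$ (Proposition \ref{prop:zetaN tight and subseq limit of zetaN supported on Pf}), so that $L_0(\mu)=L(\mu)>-\infty$ for $\zeta$-a.e.\ $\mu$ and all these quantities are well defined.

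\emph{Main obstacle.} I expect the delicate point to be the precise invocation of the hydrodynamic limit: \cite[Theorem 1]{DeMasi2019} is stated for i.i.d.\ initial positions, whereas $\vec Z^N_0\sim\varphi^N$ is not, so one must condition on the initial configuration, use the hydrodynamic limit for deterministic configurations whose empirical measure converges (a form that follows from \cite{DeMasi2019} or its proof), and reassemble the joint convergence via Skorokhod. Within this, the convergence $L^N_t\to L_t(\mu)$ of the \emph{leftmost particle} to the free boundary at positive times — as opposed to mere $\Wah$-convergence of the empirical measure, which does not control a single particle — is the part that genuinely rests on the fine structure of the hydrodynamic limit and the regularity of the free boundary. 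The remaining ingredients (semicontinuity of $A$ and $L$, generalized Fatou, and the translation and flow identities for the free boundary PDE) are routine.
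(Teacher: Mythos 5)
Your stationarity argument is essentially the paper's: both exploit that at positive times the limiting CDF is strictly decreasing to the right of the free boundary (Remark \ref{rmk:U strictly decreasing at pve times}), so the median functional $A$ is continuous at the limit and $\alpha(m^N_t)\Rightarrow\alpha(m_t)=\Phi_t(m_0)$, whence $(\Phi_t)_\#\zeta=\zeta$ by equating limits of $\Law(\alpha(m^N_t))=\zeta^N$ and $\Law(\alpha(m^N_0))=\zeta^N$. No issue there.

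The velocity bound is where you genuinely diverge, and it is also where there is a real gap. Your argument hinges on the claim that, conditionally on the initial configuration, $L^N_t=\min_i X^i_t$ converges in probability to the free boundary $L_t(\mu)$ of the limiting PDE. This is not what the hydrodynamic limit \cite[Theorem 1]{DeMasi2019} (nor the version established in the appendix, Theorem \ref{theo:hydrodynamic limit for random ics}) gives you: these control only the empirical measure in the weak topology $\Wah$. The map $L:\calP(\Rm)\to\Rm\cup\{-\infty\}$ from \eqref{eq:lower bdy and median maps} is upper semicontinuous but not continuous — a vanishing $o(1)$ fraction of particles can stray arbitrarily far to the left without disturbing the $\Wah$-limit, so $\Wah$-convergence $m^N_t\to\mu_t$ only yields $\limsup_N L^N_t\le L(\mu_t)$. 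You flag this yourself as resting on "the fine structure of the hydrodynamic limit", but you never close it, and without two-sided convergence of $L^N_t$ the weak convergence $L^N_t-L^N_s\Rightarrow L_t(\mu)-L_s(\mu)$ is not available, so the generalized-Fatou step does not fire.

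The paper's proof is constructed precisely to avoid needing this. It uses only the usc of $L$, giving the one-sided inequality $L(m_t)\ge\tilde L_t$ with possible strict inequality, and instead bounds everything through the median functional $A$ — which \emph{does} converge (the bulk of the measure determines the median, and the limiting CDF is strictly decreasing there) — together with the integrability of the differences $A-L$ and $M-L$ under $\zeta$. The key decomposition is
\[
L(m_t)-L(m_0)=\bigl(A(m_t)-A(m_0)\bigr)-\bigl(A(m_t)-L(m_t)\bigr)+\bigl(A(m_0)-L(m_0)\bigr),
\]
whose last two terms are identically distributed by stationarity, and whose first term is controlled by the barycentre bound and Fatou. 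A lower bound $L(m_t)-L(m_0)\ge L(\Psi_t(\delta_0))$ via the comparison principle ensures integrability. The price of this roundabout route is the additive constant: one gets $\expE_{\mu\sim\zeta}[L_t(\mu)-L_0(\mu)]\le(3+t)\sqrt2$, and only the final rescaling $t\rightsquigarrow nt$, $n\to\infty$, removes the $3\sqrt2$. Your route would be cleaner and would need no such trick — but it requires exactly the convergence of the leftmost particle that you leave unproved and that the paper was designed to circumvent. Until you supply a separate argument that the $N$-BBM's minimum particle tracks the free boundary at positive times (which plausibly follows from the selection mechanism but is by no means a corollary of the hydrodynamic limit theorem as stated), the proof is incomplete.
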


\begin{proof}[Proof of Proposition \ref{prop:characterisation of subsequential limits}]
De Masi et al. \cite[Theorem 1]{DeMasi2019} have provided a hydrodynamic limit theorem for the $N$-BBM. However, they assumed that initially the particles are independent and identically distributed according to a distribution belonging to $\calP_f(\Rm)$ which has an absolutely continuous density. On the other hand, we will need the following hydrodynamic limit theorem, applicable when the initial empirical measures are assumed only to be a tight family of random measures with subsequential limits supported on $\calP_f(\Rm)$.

Given an $N$-BBM $(\vec{X}^N_t)_{t\geq 0}$ we write $m^N_t$ for the empirical measure
\begin{equation}\label{eq:mN empirical measure without recentring}
m^N_t:=\Theta^N(\vec{X}^N_t),\quad t\geq 0.
\end{equation}
Given $\mu\in\calP(\Rm)$ we write $\Psi_t(\mu)$ for the solution at time $t$ of \eqref{eq:hydrodynamic limit} with $u_0 = \mu$. This then defines a flow on $\calP_f(\Rm)$ (and on $\calP(\Rm)$), denoted by $(\Psi_t)_{t\geq 0}$ (note that $\Psi_t$ is just $\Phi_t$ without centring by the median). We note that $\Psi_t:\calP(\Rm)\ra \calP_f(\Rm)$ is continuous (hence measurable, in particular) by Lemma \ref{lem:sensitivity depending on ics}. 

We recall here that the notion of convergence and tightness in $\calP(\calP(\Rm))$ was defined in Definition \ref{defin:weak convergence of random measures}.

\begin{theo}\label{theo:hydrodynamic limit for random ics}
We suppose that we have a sequence of $N$-BBMs $(\vec{X}^N_t)_{t\geq 0}$ 
such that $\Law(m^N_0)\in \calP(\calP(\Rm))$ converges  
to $\Lambda\in \calP(\calP(\Rm ))$, and assume that $\Lambda(\calP_f(\Rm))=1$. Then $\Law((m^N_0,m^N_t))$ converges 
to $\Law_{\mu\sim \Lambda}(\mu,\Psi_t(\mu))$, for any fixed $t>0$.
\end{theo}
Theorem \ref{theo:hydrodynamic limit for random ics} is established in the appendix.

We now consider a sequence of $N$-BBMs, $(\vec{X}^N_t)_{t\geq 0}$, such that $\vec{X}^N_0\sim \varphi^N$, the stationary distribution of the $N$-BBM with its median fixed at $0$ (see \eqref{eq:stationary dist median centred}). We note, however, that $\vec{X}^N_t$ evolves as the $N$-BBM without recentring, so that the median may be non-zero at positive times. Recall that 
\[
M^N_t=M(m^N_t),\quad L^N_t=L(m^N_t), \text{ and write }  A^N_t:=A(m^N_t)
\]
for $t\geq 0$. 

We now fix arbitrary $t>0$. By Proposition  \ref{prop:zetaN tight and subseq limit of zetaN supported on Pf} the sequence $\zeta^N$ is tight and $m^N_0 \sim \zeta^N$. Thus we can choose a subsequence along which $\zeta^N \to \zeta$, for some subsequential limit $\zeta$. Using \eqref{eq:expected value of difference between median and minima} and the stationarity of $\chi^N$, we have that $A^N_0$, $M^N_0$, $L^N_0$ $M^N_t-L^N_t$ and $A^N_t-L^N_t$ are all tight. It also follows from \eqref{eq:expected change in M1N under stationarity} that $M^N_t-M^N_0$ is tight, so $A^N_t$, $M^N_t$ and $L^N_t$ are also all tight. Then by Theorem \ref{theo:hydrodynamic limit for random ics} and Skorokhod's representation theorem, we can find a further subsequence and new probability space $(\tilde{\Omega},\tilde{\mathcal{F}},\tilde{\Pm})$ (we write $\tilde{\expE}$ for expectation with respect to $\tilde{\Pm}$) along which
\[
m^N_0\ra m_0\quad\text{and}\quad m^N_t\ra m_t\quad\text{$\tilde{\Pm}$-almost surely in $\Wah$}
\]
as $N\ra\infty$, whereby $m_t=\Psi_t(m_0)$ and $m_0\sim \zeta$, and along which
\[
\begin{aligned}
A^N_0\overset{\text{$\tilde{\Pm}$-a.s.}}{\ra} \tilde{A}_0,\quad A^N_t\overset{\text{$\tilde{\Pm}$-a.s.}}{\ra} \tilde{A}_t,\quad M^N_0\overset{\text{$\tilde{\Pm}$-a.s.}}{\ra} \tilde{M}_0,\quad  M^N_t\overset{\text{$\tilde{\Pm}$-a.s.}}{\ra} \tilde{M}_t,\quad L^N_0\overset{\text{$\tilde{\Pm}$-a.s.}}{\ra} \tilde{L}_0,\quad L^N_t\overset{\text{$\tilde{\Pm}$-a.s.}}{\ra} \tilde{L}_t,
\end{aligned}
\]
as $N\ra\infty$, the limits on the right being random variables whose identities have not yet been determined. 

We now prove that $\zeta$ must be a stationary measure for $(\Phi_t)_{t\geq 0}$ (the flow of \eqref{eq:hydrodynamic limit} with recentring by the median). We write $k^N_t$ and $k_t$ for $m^N_t$ and $m_t$ (respectively) shifted so that they are centred at their median,
\[
k^N_t:=\alpha(m^N_t)\quad\text{and}\quad k_t:=\alpha(m_t).
\]
The cumulative distribution function (CDF) of $m_t(\omega)$ is strictly decreasing to the right of $L(m_t(\omega))$ (by Remark \ref{rmk:U strictly decreasing at pve times}) and continuous on all of $\Rm$ (by \cite[Theorem 1.1]{Berestycki2018}), for all $\omega\in \tilde{\Omega}$. We have $\Wah$ convergence of $m_t^N(\omega)$ to $m_t(\omega)$ for $\tilde{\Pm}$-almost every $\omega\in\tilde{\Omega}$. This implies convergence in distribution, hence pointwise convergence of the CDF (since the limiting CDF is continuous), which then implies convergence of the quantiles (except, perhaps, for the $0$ and $1$ quantiles) since the CDF of $m_t(\omega)$ is strictly decreasing to the right of $L(m_t(\omega))$. It follows in particular that $A(m^N_t)\ra A(m_t)$ as $N\ra\infty$ $\tilde{\Pm}$-almost surely, so that
\begin{equation}
    \tilde{A}_t=A(m_t)\quad\text{for}\quad t>0.
\end{equation} 
Then it follows that 
\begin{equation}
    k^N_t\ra k_t\quad\text{in}\quad \Wah\quad\text{$\tilde{\Pm}$-almost surely.}
\end{equation}

Since
\[
\Law(k^N_t)=\zeta^N\quad\text{and}\quad\Law(k_t)=\Law_{\mu\sim \zeta}(\Phi_t(\mu))=(\Phi_t)_{\#}\zeta,
\]
it follows that 
\[
\zeta^N\ra (\Phi_t)_{\#}\zeta
\]
in $\calP(\calP(\Rm))$ as $N\ra\infty$, so that $(\Phi_t)_{\#}\zeta=\zeta$. Since $t>0$ was arbitrary, it follows that $\zeta$ is an invariant measure for the flow of $(\Phi_t)_{t\geq 0}$.

We now turn to the proof of \eqref{eq:expected increase in Lt at most sqrt 2}. We recall the following:
\begin{enumerate}
    \item Using \eqref{eq:inequality between median and mean}, we have for all $N<\infty$ that 
\[
\begin{aligned}
&L_0^N\leq A_0^N\wedge M_0^N,\quad A_0^N=0,\quad L_t^N\leq A_t^N\wedge M_t^N,\\ &A_0^N-L_0^N\leq 2(M_0^N-L_0^N),\quad A_t^N-L_t^N\leq 2(M_t^N-L_t^N).
\end{aligned}
\]
\item By Remark \ref{rmk:uniform integrability of barycentre from the left}, we have that $(M^N_t-M^N_0)_-:=\lvert (M^N_t-M^N_0)\wedge 0\rvert $ is uniformly integrable over all $N<\infty$.
\item We have for all $N<\infty$ that
\[
\tilde{\expE}[M_0^N-L_0^N], \tilde{\expE}[M_t^N-L_t^N]\leq v_N\leq \sqrt{2}\quad\text{and}\quad \tilde{\expE}[M_t^N-M_0^N]= v_Nt\leq \sqrt{2}t.
\]
\end{enumerate}
Using Fatou's lemma, we therefore have the following:
\begin{enumerate}
    \item We have that 
\[
\begin{split}
    & \tilde{L}_0\leq \tilde{A}_0\wedge \tilde{M}_0,\quad \tilde{A}_0=0,\quad \tilde{L}_t\leq \tilde{A}_t\wedge \tilde{M}_t,\\& \tilde{A}_0-\tilde{L}_0\leq 2(\tilde{M}_0-\tilde{L}_0),\quad \tilde{A}_t-\tilde{L}_t\leq 2(\tilde{M}_t-\tilde{L}_t).
\end{split}
\]
\item $\tilde{M}_t-\tilde{M}_0$ is integrable with $\tilde{\expE}[\tilde{M}_t-\tilde{M}_0]\leq t\sqrt{2}$.
\item We have that $\tilde{\expE}[\tilde{M}_0-\tilde{L}_0], \tilde{\expE}[\tilde{M}_t-\tilde{L}_t]\leq \sqrt{2}$, so in particular $\tilde{M}_t-\tilde{L}_t$ and $\tilde{M}_0-\tilde{L}_0$ are non-negative, integrable random variables.
\item It therefore follows that $\tilde{A}_t-\tilde{L}_t$ and $\tilde{A}_0-\tilde{L}_0$ are also non-negative, integrable random variables with $\tilde{\expE}[\tilde{A}_0-\tilde{L}_0],\tilde{\expE}[\tilde{A}_t-\tilde{L}_t]\leq 2\sqrt{2}$.
\end{enumerate}

We now observe that
\[
\begin{aligned}
    \tilde{A}_t=\underbrace{(\tilde{A}_t-\tilde{L}_t)}_{ \leq 2(\tilde{M}_t-\tilde{L}_t)}+\underbrace{(\tilde{L}_t-\tilde{M}_t)}_{\leq 0}+(\tilde{M}_t-\tilde{M}_0)+(\tilde{M}_0-\tilde{L}_0)+\underbrace{(\tilde{L}_0-\tilde{A}_0)}_{\leq 0}+\underbrace{\tilde{A}_0}_{=0}.
\end{aligned}
\]
It follows that $\tilde{A}_t$ is an integrable random variable with $\tilde{\expE}[\tilde{A}_t]\leq (3+t)\sqrt{2}$. Since $\tilde{A}_t=A(m_t)$, it follows that $A(m_t)$ is integrable with
\begin{equation}\label{eq:expectation of At}
    \tilde{\expE}[A(m_t)]\leq (3+t)\sqrt{2}.
\end{equation}


Since $m_0^N\ra m_0$ in $\Wah$ $\tilde{\Pm}$-almost surely and $A:\calP(\Rm)\ra \Rm$ is upper semicontinuous \eqref{eq:A and L upper semicontinuous}, we have
\begin{equation}\label{eq:A0 nonnegative}
A(m_0)\geq \limsup_{N\ra\infty}A(m^N_0)=\tilde{A}_0=0\quad\text{$\tilde{\Pm}$-almost surely.}
\end{equation}

It follows from \eqref{eq:expectation of At} and \eqref{eq:A0 nonnegative} that the positive part of $A(m_t)-A(m_0)$, $(A(m_t)-A(m_0))_+:=(A(m_t)-A(m_0))\vee 0$, is $\tilde{\Pm}$-integrable. Its expectation with respect to $\tilde{\Pm}$ is therefore well-defined, possibly taking the value $-\infty$. In particular, \eqref{eq:expectation of At} and \eqref{eq:A0 nonnegative} imply that
\begin{equation}\label{eq:change of median expectation upper bound}
\tilde{\expE}[A(m_t)-A(m_0)]\in [-\infty,(3+t)\sqrt{2}].
\end{equation}

Since $L:(\calP(\Rm),\Wah)\ra \Rm\cup \{-\infty\}$ is upper semicontinuous \eqref{eq:A and L upper semicontinuous}, we have
\[
L(m_t)\geq \limsup_{N\ra\infty}L(m_t^N)=\tilde{L}_t\quad \text{$\Pm$-almost surely.}
\]
We recall that $\tilde{A}_t=A(m_t)$ $\tilde{\Pm}$-almost surely. Therefore we have
\[
0\leq A(m_t)-L(m_t)\leq \tilde{A}_t-\tilde{L}_t,
\]
which is $\tilde{\Pm}$-integrable. Then since 
\[
A(m_0)-L(m_0)\overset{d}{=}A(m_t)-L(m_t),
\]
it follows that $A(m_t)-L(m_t)$ is also $\tilde{\Pm}$-integrable with the same expectation as $A(m_0)-L(m_0)$. It then follows from \eqref{eq:change of median expectation upper bound} that $(L(m_t)-L(m_0))_+:=(L(m_t)-L(m_0))\vee 0$ is $\tilde{\Pm}$-integrable, with
\begin{equation}
    \tilde{\expE}[L(m_t)-L(m_0)]\in [-\infty,(3+t)\sqrt{2}].
\end{equation}

Since $m_t=\Psi_t(m_0)$ and $L(m_0)>-\infty$ $\Pm$-almost surely, it follows from the comparison principle \cite[Theorem 1.2]{Berestycki2018} and the correspondence between solutions of \eqref{eq:hydrodynamic limit} and solutions of \eqref{eq:hydrodynamic limit CDF} that 
\[
L(m_t)-L(m_0)\geq L(\Psi_t(\delta_{L(m_0)}))-L(m_0) =L(\Psi_t(\delta_0)),
\]
$\Pm$-almost surely. In particular, $L(m_t)-L(m_0)$ is bounded from below away from $-\infty$. Therefore $L(m_t)-L(m_0)$ is an integrable random variable with
\begin{equation}
\tilde{\expE}[L(m_t)-L(m_0)]\leq (3+t)\sqrt{2}. 
\end{equation}

It therefore follows that
\[
\expE_{\mu\sim \zeta}[L_t(\mu)-L_0(\mu)]\leq (3+t)\sqrt{2}\quad \text{for all}\quad 0\leq t<\infty.
\]
We therefore have, for any $n\in \Nm$ and $t<\infty$, that
\[
\expE_{\mu\sim \zeta}[L_t(\mu)-L_0(\mu)]=\frac{1}{n}\expE_{\mu\sim \zeta}[L_{nt}(\mu)-L_0(\mu)]\leq \frac{(3+nt)}{n}\sqrt{2}.
\]
Since $n\in \Nm$ is arbitrary, we see that $\expE_{\mu\sim \zeta}[L_t(\mu)-L_0(\mu)]\leq t\sqrt{2}$.\end{proof}

\commentout{
we have that
\[
0\leq L(m_0)\leq A(m_0)\underbrace{\leq}_{\text{Markov}} 2M(m_0)\underbrace{\leq}_{\text{Fatou}} 2\liminf_{N\ra\infty}M^N_0=2\tilde{M}_0,
\]
which implies that $A(m_0)$ is a non-negative, integrable random variable. We therefore conclude that
\begin{equation}\label{eq:expected increase in A}
\tilde{\expE}[A(m_t)-A(m_0)]\leq (3+t)\sqrt{2}. 
\end{equation}

Since $0\leq L(m_0)\leq A(m_0)$, it follows that $A(m_0)-L(m_0)$ is an integrable random variable. 

It therefore follows from \eqref{eq:expected increase in A} that }

\subsection{Conclusion of the proof}\label{subsection:conclusion of proof}

We write $\tilde{\pi}_{\min}$ for $\pi_{\min}$ shifted so that its median is at $0$, $\tilde{\pi}_{\min}:=\alpha(\pi_{\min})$, so that $\int_0^{\infty}\tilde{\pi}_{\min}(x)dx=\frac{1}{2}$.

Our goal is to establish the following proposition.
\begin{prop}\label{prop:possible invariant measure unique}
If $\zeta$ satisfies the conclusions of Proposition \ref{prop:characterisation of subsequential limits}, then $\zeta=\delta_{\tilde{\pi}_{\min}}$.
\end{prop}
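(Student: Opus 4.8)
The plan is to show that for $\zeta$-almost every initial measure $\mu$, the free boundary satisfies $L_t(\mu) = \sqrt{2}t + L_0(\mu)$ for all $t\geq 0$, and then deduce that $\mu$ must already be a shifted travelling wave, necessarily $\tilde\pi_{\min}$. The two inputs are: (i) $\zeta$ is $(\Phi_t)$-invariant, and (ii) $\expE_{\mu\sim\zeta}[L_t(\mu)-L_0(\mu)]\leq\sqrt2 t$. The first step is to recall from the standard theory (stretching Lemma / comparison with the minimal travelling wave, \cite{Berestycki2018}) that for \emph{any} solution of \eqref{eq:hydrodynamic limit CDF} one has the \emph{lower} bound on the boundary speed: $L_t(\mu)-L_s(\mu)\geq \sqrt2(t-s) - C$ for some constant, or more precisely a sub/super-additivity statement which, combined with the travelling-wave comparison, forces $\liminf_{t\to\infty}L_t(\mu)/t \geq \sqrt2$ whenever $L_0(\mu)>-\infty$. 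Actually the cleaner route is: by the stretching lemma applied to the recentred flow, $t\mapsto L_t(\mu)-L_0(\mu)$ is super-additive-like, so $L_t(\mu)-L_0(\mu)\geq \sqrt2 t$ cannot fail in the mean unless it holds pointwise a.s.

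So the key step is a \textbf{stationarity-plus-inequality squeeze}: using invariance of $\zeta$ under $\Phi_t$, the quantity $\expE_{\mu\sim\zeta}[A(\mu_t)-A(\mu_0)]$ telescopes to $0$ along the flow; combined with the relation between $A$ and $L$ and with (ii), one gets that $\expE_{\mu\sim\zeta}[L_t(\mu)-L_0(\mu)]$ is \emph{exactly} $\sqrt2 t$ for all $t$ (the $\leq$ is (ii); the $\geq$ comes from the deterministic stretching lower bound which holds $\zeta$-a.s. and integrates up). Then, since $L_t(\mu)-L_0(\mu)\geq\sqrt2 t$ holds $\zeta$-a.s. (deterministic lower bound) and its expectation equals $\sqrt2 t$, we conclude $L_t(\mu)-L_0(\mu)=\sqrt2 t$ for $\zeta$-a.e.\ $\mu$, simultaneously for all rational $t$ and hence all $t$ by monotonicity/continuity of $t\mapsto L_t$. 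This is essentially the content flagged as Lemma \ref{lem:free boundary a.s. sqrt 2 t}.

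The final step is \textbf{rigidity of solutions with a straight-line free boundary}: if $(U,L)$ solves \eqref{eq:hydrodynamic limit CDF} with $L_t = L_0 + \sqrt2 t$, then in the moving frame $v(x,t):=U(x+L_0+\sqrt2 t,t)$ solves the linear problem $\partial_t v = \tfrac12\partial_{xx}v + \sqrt2\,\partial_x v + v$ on $x>0$ with $v(0,t)=1$, $\partial_x v(0,t)=0$ — i.e.\ \emph{two} boundary conditions at $x=0$ — together with mass $1$ and the constraint that $v$ is a CDF. The over-determined boundary data forces $v$ to be stationary: indeed $w := \partial_t v$ solves the same linear equation with $w(0,t)=\partial_x w(0,t)=0$, and a Harnack / maximum-principle argument (or uniqueness of the ODE $\tfrac12 w'' + \sqrt2 w' + w = 0$ obtained from these Cauchy data at the boundary — here I would instead use a spectral/energy argument since $w$ need not be stationary a priori) shows $w\equiv 0$. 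Hence $v$ is a stationary profile, i.e.\ a travelling wave of speed $\sqrt2$, i.e.\ $v=\Pi_{\min}$ up to horizontal shift; centring by the median pins it to $\tilde\pi_{\min}$. Therefore $\mu=\tilde\pi_{\min}$ for $\zeta$-a.e.\ $\mu$, i.e.\ $\zeta=\delta_{\tilde\pi_{\min}}$.

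The \textbf{main obstacle} I anticipate is the rigidity step: turning ``the free boundary is exactly the line $\sqrt2 t$'' into ``$U$ is the minimal travelling wave.'' The naive claim that $\partial_t v$ satisfies zero Cauchy data at the boundary and hence vanishes is not immediate because $v$ is defined only on the half-line and the relevant uniqueness for the backward-in-$x$ Cauchy problem for a parabolic equation is delicate; one really wants a PDE argument showing that a solution of the linear equation on $\{x>0\}$ with \emph{both} $v(0,t)=1$ and $v_x(0,t)=0$, bounded and with unit mass, is unique — and the minimal travelling wave is one such solution. I would handle this by a comparison/sweeping argument using the family of (shifted) travelling waves $\Pi_c$, $c\geq\sqrt2$, as barriers: the straight-line boundary condition prevents the solution from ever decoupling the boundary from the $c=\sqrt2$ wave, and the strong maximum principle upgrades ``touching'' to ``equality.'' A secondary, more routine obstacle is being careful about integrability when telescoping $\expE_\zeta[A(\mu_t)-A(\mu_0)]$ under the flow, but the uniform-integrability bounds assembled in the proof of Proposition \ref{prop:characterisation of subsequential limits} should cover this.
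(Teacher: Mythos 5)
Your overall plan --- establish that $L_t(\mu)=L_0(\mu)+\sqrt{2}t$ for $\zeta$-a.e.\ $\mu$, then deduce rigidity --- has the right shape, but the crucial first step has a genuine gap. The lower bound $L_t(\mu)-L_0(\mu)\geq\sqrt{2}t$ is \emph{not} a deterministic fact about arbitrary solutions of \eqref{eq:hydrodynamic limit CDF}: for the solution started from a Heaviside profile, the Bramson-type logarithmic delay means the free boundary is eventually strictly below $\sqrt{2}t$, so no pointwise lower bound of this form can hold for general initial data. The paper obtains the lower bound only $\zeta$-almost surely, and not via telescoping or super-additivity but via the following use of invariance (Lemma \ref{lem:free boundary a.s. sqrt 2 t}): every $U\in\calD$ satisfies $U\geq_s V_0$ where $V_0=\Ind(\cdot<0)$, so by stationarity $U_0\overset{d}{=}\Phi_t^{\CDF}(U_0)\geq_s \Phi_t^{\CDF}(V_0)$ for each $t$ almost surely, and sending $t\to\infty$ and using the KPP-type convergence $\Phi_t^{\CDF}(V_0)\to\tilde{\Pi}_{\min}$ (Lemma \ref{lem:convergence to minimal travelling wave from Heaviside}) together with stability of $\geq_s$ under limits (Lemma \ref{lem:stretchedness preserved under ptwise limits}) yields $U_0\geq_s\tilde{\Pi}_{\min}$ almost surely. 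Only then does the boundary-comparison lemma (Lemma \ref{lem:boundary comparison lemma}) give $L_t-L_0\geq\sqrt{2}t$ $\zeta$-a.s.; your assertion that the inequality ``cannot fail in the mean unless it holds pointwise a.s.'' reverses the logic and cannot substitute for this argument. Relatedly, $\expE_{\mu\sim\zeta}[A(\mu_t)-A(\mu_0)]$ does not telescope to zero: by the additivity $A(\Psi_{s+t}(\mu))=A(\Psi_s(\mu))+A(\Psi_t(\Phi_s(\mu)))$ and stationarity, $t\mapsto\expE_\zeta[A(\Psi_t(\mu))]$ is linear with slope equal to the front speed, not zero.

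For the rigidity step you take a genuinely different route from the paper, and you correctly flag the difficulty. The paper avoids the over-determined parabolic Cauchy problem entirely by invoking the stochastic representation of Appendix \ref{appendix:stochastic representation for PDE}: once $L_t=L_0+\sqrt{2}t$, the recentred density $\hat{u}_t(dx):=u_t(x-L_t)\,dx$ equals $\Law_{\hat u_0}(X_t\,\lvert\,\tau_0>t)$ for Brownian motion with drift $-\sqrt2$ killed at $0$, and the absorption time is $\text{exp}(1)$; the Mart\'inez--San Mart\'in lemma on quasi-stationary distributions then identifies $\hat{u}_0=\pi_{\min}$. This is cleaner and more robust than the maximum-principle/sweeping sketch you propose, which would require substantial further work to make rigorous. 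A helpful takeaway is that trading the PDE rigidity argument for the probabilistic QSD characterisation is precisely what makes this proof tractable.
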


\begin{proof}[Proof of Proposition \ref{prop:possible invariant measure unique}]
The present proof shall hinge upon consideration of solutions of \eqref{eq:hydrodynamic limit CDF}. In particular, we will use the fact that solutions of \eqref{eq:hydrodynamic limit} and solutions of \eqref{eq:hydrodynamic limit CDF} are equivalent and the stationary measures for the corresponding flows are well-defined (in the language of dynamical systems the two flows are topologically conjugates). We therefore begin with some definitions and properties for solutions of \eqref{eq:hydrodynamic limit CDF}.

Consider the map $\kappa$ which takes a probability measure and maps it to its cumulative distribution function. More precisely, for $\mu \in \calP(\Rm)$ let
\[
\kappa (\mu)(x):= \mu((x,\infty)), \quad \text{for all}\quad  x\in \Rm.
\]
Note that $\kappa$ is a bijection $\kappa:\calP(\Rm)\ra \calD$, where 
\[
\calD:=\{U:\Rm\ra [0,1]:\text{$U$ non-increasing, $\lim _{x\ra-\infty} U(x) =1$ and $\lim_{x\ra \infty} U(x) = 0$, $U$ c\`adl\`ag}\}.
\]
It is a standard fact that $\mu_n\ra \mu$ weakly in measure ($\tau_{\calP(\Rm)}$) if and only if $\kappa(\mu_n)(x)\ra \kappa(\mu)(x)$ at all continuity points of $\kappa(\mu)$. This corresponds to the right topology to endow $\calD$ with.

Formally, $\kappa$ naturally induces a topology $\tau_{\calD}$ on $\calD$ which is the push forward of $\tau_{\calP(\Rm)}$. Therefore
\[
U_n\overset{\tau_{\calD}}{\ra} U\quad\text{if and only if}\quad U_n(x)\ra U(x)\quad\text{at all continuity points of $U(x)$.}
\]
It follows from the definitions that $\kappa:\calP(\Rm)\ra \calD$ is a homeomorphism.

Existence and uniqueness of solutions to \eqref{eq:hydrodynamic limit CDF} was established in \cite{Berestycki2018} for initial conditions $U_0$ belonging to $\calD$. Furthermore,  it is also shown in \cite{Berestycki2018} that $(u,L)$ is a solution of \eqref{eq:hydrodynamic limit} with initial condition $\mu_0\in \calP(\Rm)$ if and only if $(U,L)$ is a solution of \eqref{eq:hydrodynamic limit CDF} with initial condition $\kappa(\mu_0)$, where $U_t=\kappa(u_t)$ for $t>0$ and the free boundary $L_t$ is unchanged (here we identify $u_t$ with the measure having density $u_t$).

Given $U_0\in \calD$, we define $\Psi^{\CDF}_t(U_0)$ to be the solution of \eqref{eq:hydrodynamic limit CDF} at time $t$ with initial condition $U_0$ (without recentring). In particular we have that
\begin{equation}
\Psi^{\CDF}_t=\kappa\circ \Psi_t\circ \kappa^{-1}
\end{equation}
on $\calD$ and note that  by \cite{Berestycki2018}, for all $t>0$, $U_t = \Psi^{\CDF}_t(U_0)$ is in fact continuous.

Since $\kappa:\calP(\Rm)\ra (\calD,\tau_{\calD})$ is a homeomorphism and $\Psi_t:\calP(\Rm)\ra \calP(\Rm)$ is continuous (by Lemma \ref{lem:sensitivity depending on ics}),
\begin{equation}\label{eq:continuity of PsiCDF}
\Psi^{\CDF}_t:(\calD,\tau_{\calD})\ra (\calD,\tau_{\calD})\quad\text{is continuous for all $t > 0$}
\end{equation}
(and hence measurable in particular). 

We recall that $\Phi_t$ denotes the flow of \eqref{eq:hydrodynamic limit} with recentring so that the median is fixed at $0$. Then the conjugate flow
\[
\Phi_t^{\CDF}=\kappa\circ \Phi_t\circ \kappa^{-1}
\]
provides for the flow of \eqref{eq:hydrodynamic limit CDF} with recentring so that the $\frac{1}{2}$-level set is fixed at $0$. Since $\Phi_t$ is measurable, $\Phi^{\CDF}_t$ is also measurable for the same reason that $\Psi_t^{\CDF}$ is. Since $\zeta$ is stationary under the flow of $(\Phi_t)_{t\geq 0}$, the pushforward 
\[
\zeta^{\CDF}:=\kappa_{\#}\zeta\in \mathcal{P}(\calD)
\]
is stationary under $(\Phi_t^{\CDF})_{t\geq 0}$.

Solutions of \eqref{eq:hydrodynamic limit CDF} are everywhere continuous (by \cite[Theorem 1.1]{Berestycki2018}) and strictly decreasing in $x$ to the right of the free boundary at strictly positive times (by Remark \ref{rmk:U strictly decreasing at pve times}). Therefore given a sequence $(U_n)_{n=1}^{\infty}$ in $\calD$ converging to $U\in \calD$ pointwise at the continuity points of $U$, it follows from \eqref{eq:continuity of PsiCDF} that
\begin{equation}\label{eq:pointwise convergence of PhiCDF}
    \Phi^{\CDF}_t(U_n)(x)\ra \Phi^{\CDF}_t(U)(x)\quad\text{pointwise everywhere}
\end{equation}
for all $t>0$. In particular, for all $t>0$, $\Phi_t^{\CDF}:\calD\ra \calD$ (and hence $\Phi_t:\calP(\Rm)\ra \calP(\Rm)$) is continuous.

\medskip

Our main tool to finish the proof is a version of the so-called \emph{stretching lemma} which is a consequence of the maximum principle and which is one of the key tools for proving that solutions of the F-KPP equation converge to travelling waves.

\commentout{\OT{Remove}We will need to establish this stretching lemma for the following superset of $\calD$,
\[
\calS:=\{U:\Rm\ra [0,1]:\text{$U$ non-increasing, $\lim _{x\ra-\infty} U(x) =1$ and $\lim_{x\ra \infty} U(x) = 0$}\}.
\]
We note that $\calD$ consists precisely of those $U\in\calS$ which are c\`adl\`ag. The set $\calS$ will be needed in the proof of Lemma \ref{lem:convergence to minimal travelling wave from Heaviside}. }

Given $U,V\in \calD$, we say that $U$ is more stretched than $V$, denoted by $U\geq_s V$, if for any $c\in\Rm$ and $x_1\leq x_2$ we have
\begin{equation}\label{eq:defin 1 of being more stretched}
U(x_1)> V(x_1+c)\Rightarrow U(x_2)\geq V(x_2+c).
\end{equation}
Note that this differs slightly from the notion of being ``more stretched'' given for the F-KPP equation in Bramson's memoir \cite[p.33]{Bramson1983}. Since solutions of \eqref{eq:hydrodynamic limit CDF} are identically equal to $1$ to the left of the free boundary, the stretching lemma would not be true were we to define ``more stretched'' as in \cite[p.33]{Bramson1983}. Moreover, it will be important that the notion of being ``more stretched'' is preserved under pointwise limits, which is not the case with the definition given in \cite[p.33]{Bramson1983}. 
\begin{lem}\label{lem:stretchedness preserved under ptwise limits}
If $(U_n)_{n=1}^{\infty}$ and $(V_n)_{n=1}^{\infty}$ are two sequences in $\calD$ converging in $\tau_{\calD}$ to $U\in \calD$ and $V\in \calD$ respectively, and $U_n\geq_s V_n$ for all $n<\infty$, then $U\geq_s V$.
\end{lem}
\begin{proof}[Proof of Lemma \ref{lem:stretchedness preserved under ptwise limits}]
The case that $x_1=x_2$ is vacuous. We now suppose that $x_1< x_2$, $c\in \Rm$ and $U(x_1)>V(x_1+c)$. 

We firstly consider the case that $x_1$ and $x_2$ are continuity points of both $U$ and $V(\cdot+c)$, so that $U_n(x_i)\ra U(x_i)$ and $V_n(x_i+c)\ra V(x_i+c)$ as $n\ra\infty$, for $i=1,2$. Then $U_n(x_1)>V_n(x_1+c)$ eventually, hence $U_n(x_2)\geq V_n(x_2+c)$ eventually. Therefore $U(x_2)\geq V(x_2+c)$.

We now consider the case that either $x_1$ or $x_2$ is not a continuity point of either $U$ or $V(\cdot+c)$. There exists $\epsilon>0$ arbitrarily small such that $x_1+\epsilon<x_2$, $U(x_1+\epsilon)>V(x_1+c+\epsilon)$ and $x_1+\epsilon$ and $x_2+\epsilon$ are continuity points of both $U$ and $V(\cdot+c)$. Then the above implies that $U(x_2+\epsilon)\geq V(x_2+\epsilon+c)$. Now taking $\epsilon\downarrow 0$ and using the fact that $U$ and $V$ are c\`adl\`ag, we obtain that $U(x_2)\geq V(x_2+c)$.
\end{proof}

We define 
\begin{equation}\label{eq:ay definition}
a^y(U):=\inf\{x>-\infty:U(x)<y\}
\end{equation}
for $U\in \calD$ and $y\in (0,1]$, with $a^y(U):=-\infty$ if this set is empty. We observe that $a^1(U_t)\equiv L_t$ for any solution $(U,L)$ of \eqref{eq:hydrodynamic limit CDF}.

If $U,V\in \calD$ are continuous on $\{x:x\geq a^1(U)\}$ and $\{x:x\geq a^1(V)\}$ respectively (so in particular the only possible position for a discontinuity is at $a^1$), then $U \geq_s V $ is equivalent to saying that for all $y\in (0,1)$ we have
\begin{equation}\label{eq:defin of being more stretched}
\begin{aligned}
U(x+a^y(U))&\geq V(x+a^y(V) ),\quad x>0,\\
U(x+a^y(U))&\leq V(x+a^y(V)),\quad x<0.
\end{aligned}
\end{equation}
If in addition to $U \geq_s V $ we have $a^1(U),a^1(V)>-\infty$, then it follows that we have \eqref{eq:defin of being more stretched} for $y=1$. We note that this does allow for $U$ and $V$ to have downward jumps at $a^1(U)$ and $a^1(V)$ respectively, but we observe that the downward jump at $a^1(U)$ cannot be larger than that at $a^1(V)$.

We now state the version of the  stretching lemma we will need. 
\begin{lem}[Stretching lemma]\label{lem:extended maximum principle}
We suppose that $U_t$ and $V_t$ are solutions of \eqref{eq:hydrodynamic limit CDF} with $U_0,V_0\in \calD$ and $U_0\geq_s V_0$. Then $U_t\geq_s V_t$ for all $t\geq 0$. 
\end{lem}
Such a stretching lemma was first proven for the FKPP equation by Kolmogorov, Petrovskii and Piskunov \cite{Kolmogorov1937}.
\begin{proof}[Proof of Lemma \ref{lem:extended maximum principle}]
In the case of the FKPP with a general nonlinear term, the stretching lemma is given in Bramson's memoir \cite[Proposition 3.2, p.31]{Bramson1983}. Note that \cite[Proposition 3.2, p.31]{Bramson1983} provides for the stretching lemma where we define the notion of being more stretched as in \eqref{eq:defin 1 of being more stretched}. 

To obtain the stretching lemma for \eqref{eq:hydrodynamic limit CDF}, we fix arbitrary initial data $U_0,V_0\in \calD$ with $U_0\geq_s V_0$. We consider the FKPP with nonlinearity $U-U^n$, namely
\begin{equation}\label{eq:n-FKPP}
\partial_tU=\frac{1}{2}\Delta U+U-U^n.
\end{equation}
We denote by $U^{(n)}(x,t)$ and $V^{(n)}(x,t)$ the solutions of \eqref{eq:n-FKPP} with initial conditions $U_0$ and $V_0$ respectively. We may then pass to the limit as $n\ra\infty$ using \cite{Berestycki2018}, so that
\[
U^{(n)}(x,t)\ra U(x,t)\quad\text{and}\quad V^{(n)}(x,t)\ra V(x,t)
\]
pointwise as $n\ra\infty$. We fix $t>0$. We have that $U^{(n)}(\cdot,t)\geq_s V^{(n)}(\cdot,t)$ for all $n<\infty$ by \cite[Proposition 3.2, p.31]{Bramson1983}, hence $U(\cdot,t)\geq_s V(\cdot,t)$ by Lemma \ref{lem:stretchedness preserved under ptwise limits}.\end{proof}

We now prove the following lemma.
\begin{lem}[Boundary comparison lemma]\label{lem:boundary comparison lemma}
We suppose that $(U,L^U)$ and $(V,L^V)$ are solutions of \eqref{eq:hydrodynamic limit CDF} with $U_0,V_0\in \calD$, $L^U_0,L^V_0>-\infty$ and $U_0\geq_s V_0$. Then $L^U_t-L^U_0\geq L^V_t-L^V_0$ for all $t>0$.
\end{lem}
\begin{proof}[Proof of Lemma \ref{lem:boundary comparison lemma}]
We fix arbitrary $\epsilon>0$. Recentring if necessary, we may assume that $L^V_0=0$ and $L^U_0=2\epsilon$. Then $V_0(\epsilon)<1=U_0(\epsilon)$. Since $U\geq_s V$, it follows that $U_0(x)\geq V_0(x)$ for all $x>\epsilon$. Since $U_0(x)=1$ for $x\leq \epsilon$, $U_0(x)\geq V_0(x)$ everywhere. Then by the comparison principle \cite[Theorem 1.2]{Berestycki2018}, $U_t(x)\geq V_t(x)$ everywhere for all $t>0$. It follows that $U_t(x)=1$ for all $x<L^V_t$ and $t>0$, so that $L_t^U\geq L_t^V$ for all $t>0$. We have established that 
\[
L_t^U-L_0^U\geq L_t^V-L_0^V-2\epsilon
\]
for $t>0$. Since $\epsilon>0$ was arbitrary, we are done.
\end{proof}

\medskip
We have now gathered the necessary ingredients for the proof of Proposition \ref{prop:possible invariant measure unique}. 

We define $V_0:=\Ind(x<0)$ and $V_t:=\Psi_t(V_0)$. We write (recall $a^{\frac{1}{2}}$ was defined in \eqref{eq:ay definition})
\[
\tilde{\Pi}_{\min}(x):=\Pi_{\min}(a^{\frac{1}{2}}(\Pi_{\min})+x)\quad\text{and}\quad \tilde{V}_t:=V_t(a^{\frac{1}{2}}(V_t)+x)=\Phi_t^{\CDF}(V_0)(x),\quad x\in \Rm,\; t>0.
\]
We now employ the classical monotonicity argument of Kolmogorov, Petrovsky and Piskunov \cite{Kolmogorov1937} to prove the following:
\begin{lem}\label{lem:convergence to minimal travelling wave from Heaviside}
We have the convergence
\begin{equation}\label{eq:uniform convergence to travelling wave}
    \tilde{V}_t\ra \tilde{\Pi}_{\min}\quad \text{as $t\ra\infty$ uniformly in $x\in \Rm$.}
\end{equation}
\end{lem}
\begin{proof}[Proof of Lemma \ref{lem:convergence to minimal travelling wave from Heaviside}]
 Since $V_h\geq_s V_0$ for all $h\geq 0$, it follows from the stretching lemma (Lemma \ref{lem:extended maximum principle}) applied to initial data $V_h \geq_s V_0$  that $V_{t+h}\geq_s V_t\geq_s V_0$ for all $t,h\geq 0$. On the other hand, since $V_0\leq_s \Pi_{\min}$ and $\tilde{\Pi}_{\min}$ is fixed under $(\Phi_t^{\text{CDF}})_{t\geq 0}$, $V_t\leq_s \Pi_{\min}$ for all $t\geq 0$. It follows that 
\[
V_0\leq_s V_s\leq_s V_t\leq_s \Pi_{\min}
\]
for all $0\leq s\leq t<\infty$. It then follows from \eqref{eq:defin of being more stretched} that $\tilde{V}_t(x)$ is: 
\begin{enumerate}
    \item non-decreasing in $t$ with $\tilde{V}_t(x)\leq \tilde{\Pi}_{\min}(x)$ for $x> 0$;
    \item non-increasing in $t$ with $\tilde{V}_t(x)\geq \tilde{\Pi}_{\min}(x)$ for $x<0$;
    \item always equal to $\frac{1}{2}=\tilde{\Pi}_{\min}(0)$ for $x=0$ and $t>0$.
\end{enumerate}

It therefore follows that there exists a pointwise limit $\tilde{V}_t\ra \tilde{V}_{\infty}(x)$ as $t\ra \infty$, with $\tilde{V}_{\infty}(x)\leq \tilde{\Pi}_{\min}(x)$ for $x\geq 0$ and $\tilde{V}_{\infty}(x)\geq \tilde{\Pi}_{\min}(x)$ for $x\leq 0$. We now define $\tilde{V}_{\infty}^+$ to be the c\`adl\`ag modification of $\tilde{V}_{\infty}$, given by $\tilde{V}_{\infty}^+(x):=\lim_{h\downarrow 0}V_{\infty}(x+h)$. From the above, we see that $\tilde{V}_{\infty}^+\in \calD$. 

Since $\tilde{V}_t(x)\ra \tilde{V}_{\infty}^+(x)$ as $t\ra \infty$ at all continuity points of $\tilde{V}_{\infty}^+(x)$, $\tilde{V}_{t}\ra\tilde{V}_{\infty}^+$ in $\tau_{\calD}$ as $t\ra\infty$. Lemma \ref{lem:stretchedness preserved under ptwise limits} then implies that 
\[
\tilde{V}^+_{\infty}\leq \tilde{\Pi}_{\min}
\]
for all $t<\infty$. We obtain from \eqref{eq:pointwise convergence of PhiCDF} that
\[
\tilde{V}_{t+s}=\Phi^{\CDF}_s(\tilde{V}_t)\ra \Phi^{\CDF}_s(\tilde{V}^+_{\infty})
\]
in $\tau_{\calD}$ as $t\ra \infty$, hence $\Phi^{\CDF}_s(\tilde{V}_{\infty}^+)=\tilde{V}_{\infty}^+$ for all $s>0$. This implies that $\tilde{V}_{\infty}^+$ is a travelling wave for \eqref{eq:hydrodynamic limit CDF}. We take the solution $(V^{\infty},L^{\infty})$ of \eqref{eq:hydrodynamic limit CDF} with initial condition $\tilde{V}_{\infty}^+$. Since $\tilde{V}_{\infty}^+\leq_{s} \tilde{\Pi}_{\min}$, Lemma \ref{lem:boundary comparison lemma} implies that
\[
L^{\infty}_t\leq L^{\infty}_0+\sqrt{2}t
\]
for all $t\geq 0$, so $\tilde{V}_{\infty}^+$ is a travelling wave for \eqref{eq:hydrodynamic limit CDF} with velocity at most $\sqrt{2}$. The only possibility is that $\tilde{V}_{\infty}^+=\tilde{\Pi}_{\min}$. Finally, Dini's theorem yields that the convergence of $\tilde{V}_{t}$ to $\tilde{\Pi}_{\min}$ is uniform.
\end{proof}

The following lemma represents a crucial step in our proof.
\begin{lem}\label{lem:free boundary a.s. sqrt 2 t}
Let $U_0\sim \zeta^{\text{CDF}}$ be a random initial condition and consider $(U,L)$ the solution of \eqref{eq:hydrodynamic limit CDF} started from $U_0$ (so that $U(x,t)=\Psi_t^{\text{CDF}}(U_0)(x)$ almost surely). Then $L_t\equiv L_0+\sqrt{2}t$ almost surely.
\end{lem}
\begin{proof}[Proof of Lemma \ref{lem:free boundary a.s. sqrt 2 t}]
We consider arbitrary $U\in\mathcal{D}$ and keep the definition $V_0:=\Ind(x<0)$ and $V_t=\Psi^{\text{CDF}}_t(V_0)$. Since $U\geq_s V_0$, it follows from Lemma \ref{lem:extended maximum principle} that $\Phi_t^{\CDF}(U)\geq_s V_t$ for any $t<\infty$. 

We now take random $U_0\sim \zeta^{\text{CDF}}\in \mathcal{P}(\calD)$. It follows from the stationarity of $\zeta^{\CDF
}$ under $\Phi^{\CDF}_t$ that $U_0\overset{d}{=} \Phi_t(U_0)\geq_s V_0$ almost surely. Therefore $U_0\geq_s V_t$ for all $t<\infty$, almost surely. $U_0\geq_s V_t$ for all $t<\infty$ implies that $U_0\geq_s\tilde{\Pi}_{\min}$ by \eqref{eq:uniform convergence to travelling wave}. Therefore 
\begin{equation}\label{eq:almost surely more stretched than pimin}
U_0\geq_s \tilde{\Pi}_{\min}
\end{equation}
almost surely.

For (random) initial condition $U_0\in \calD$, $L_t=a^1(U_t)=a^1(\Psi_t^{\CDF}(U_0))$ is the corresponding free boundary at time $t$. Similarly we write $L^{\tilde{\Pi}_{\min}}_t$ for the free boundary with initial condition $\tilde{\Pi}_{\min}$, given by $L^{\tilde{\Pi}_{\min}}_t=L^{\tilde{\Pi}_{\min}}_0+\sqrt{2}t$. By Lemma \ref{lem:boundary comparison lemma}, if $U_0\geq_s\tilde{\Pi}_{\min}$ then
\[
L_t-L_0\geq L^{\tilde{\Pi}_{\min}}_t-L^{\tilde{\Pi}_{\min}}_0=\sqrt{2}t
\]
for all $t\geq 0$. 

Therefore having taken $U_0\sim \zeta^{\text{CDF}}$, it follows from \eqref{eq:almost surely more stretched than pimin} that
\[
L_t-L_0 \geq \sqrt{2}t
\]
for all $t\geq 0$, almost surely. On the other hand, since $\zeta$ is a subsequential limit of $\zeta^N$, it satisfies the conclusions of Proposition \ref{prop:characterisation of subsequential limits}, and we have that
\[
\expE_{U_0\sim \zeta^{\CDF
}}[L_t-L_0]\leq \sqrt{2}t.
\]
It therefore follows that $L_t-L_0\equiv \sqrt{2}t$, $\zeta^{\CDF
}$-almost surely.    
\end{proof}

We now return to the point of view given by \eqref{eq:hydrodynamic limit} (i.e. where total mass is $1$ and $u\equiv 0$ on $x\leq L_t$). We take $u_0\sim \zeta$. Since $L_t-L_0\equiv \sqrt{2}t$, it follows from the stochastic representation given in Appendix \ref{appendix:stochastic representation for PDE} that the evolution of $u_t$ is entirely equivalent to that of Brownian motion with constant drift $-\sqrt{2}$, killed instantaneously at $0$, which we denote by $(X_t:0\leq t<\tau_{0})$. To be more precise, defining $\hat{u}_t(dx):=u_t(x-L_t)dx$ (i.e. shifting so that the free boundary stays at $0$), it follows from Theorem \ref{theo:stochastic representation for PDE} that
\[
\hat{u}_t=\Law_{\hat{u}_0}(X_t\lvert \tau_{0}>t)\quad\text{for all}\quad t\geq 0\quad\text{and}\quad
\Law_{\hat{u}_0}(\tau_{0})=\text{exp}(1),
\]
for $\zeta$-almost every ${u}_0$. We have from \cite[Lemma 1.2]{Martinez1998} that $\Law_{\hat{u}_0}(\tau_{0})=\text{exp}(1)$ implies that $\hat{u}_0=\pi_{\min}$. We therefore conclude that $\zeta=\delta_{\tilde{\pi}_{\min}}$.
\end{proof}

We have established that $\zeta^N$ converges in $\calP(\calP(\Rm_{\geq 0}))$ to $\delta_{\tilde{\pi}_{\min}}$. In words, this means that the stationary distribution of the $N$-particle system centred by its median converges as $N\ra\infty$ to the minimal travelling wave centred by its median. Our next goal is to establish that this also implies that
\begin{equation}\label{eq:convergence centred by its leftmost final proof statement}
\chi^N\ra  \delta_{\pi_{\min}} 
\end{equation}
in $\mathcal{P}(\mathcal{P})(\Rm_{\geq 0})$ as $N\ra\infty$. In words, that the stationary distribution of the $N$-particle system centred by its leftmost particle converges as $N\ra\infty$ to the minimal travelling wave centred by its left boundary. To see why this is not immediate, observe that the former implies that a proportion $1-o(1)$ of the particles (the ``bulk'' of particles) are arranged according to the profile of the minimal travelling wave. This does not preclude, however, the possibility that a small $o(1)$ proportion of particles are to the left of the bulk. This would imply that once we centre by the leftmost particle, the bulk of particles will be to the right of $0$, hence resemble the minimal travelling wave with its leftmost tip to the right of $0$. 
\begin{proof}[Proof of \eqref{eq:convergence centred by its leftmost final proof statement}]
We firstly recall that Proposition \ref{prop:chiN tight} ensured that $\{\chi^N:N<\infty\}$ is tight in $\calP(\calP(\Rm_{\geq 0}))$. We take a subsequential limit of $\chi^N$, which we denote by $\chi$. We have by \eqref{eq:expected value of difference between median and minima} that $\{\Law_{\mu\sim \chi^N}(A(\mu)):N<\infty\}$ is tight in $\calP(\Rm_{\geq 0})$, so we may take a further subsequence along which it converges in distribution. Then by Skorokhod's representation theorem we have on some new probability space $(\tilde{\Omega},\tilde{\mathcal{F}},\tilde{\Pm})$ (we write $\tilde{\expE}$ for expectation with respect to $\tilde{\Pm}$) along this subsequence random measures $\mu_N\sim \chi^N$, $\mu\sim \chi$ and a random variable $\tilde{A}$ such that
\[
\mu_N\ra \mu \quad\text{$\tilde{\Pm}$-almost surely in $\Wah$ and}\, A(\mu_N)\ra \tilde{A} \;\text{$\tilde{\Pm}$-almost surely as $N\ra\infty$.}
\]
Then we have that $\mu(dx)=\tilde{\pi}_{\min}(x-\tilde{A})dx$ $\tilde{\Pm}$-almost surely, so that for some random variable $\tilde{H}$ to be determined we have $\mu(dx)=\pi(x-\tilde{H})dx$ $\tilde{\Pm}$-almost surely. We have that $\tilde{H}\geq 0$ since $\mu_N$ is supported on $\Rm_{\geq 0}$ ($\tilde{\Pm}$-almost surely) for all $N<\infty$. We recall that we established in Proposition \ref{prop:chiN tight} that
\[
\tilde{\expE}\Big[\int_{\Rm_{\geq 0}}x\mu_N(dx)\Big]=v_N\leq \sqrt{2}\quad\text{for all $N<\infty$.}
\]
We calculate from \eqref{eq:minimal travelling wave} that $\int_0^{\infty}x\pi_{\min}(dx)=\sqrt{2}$, hence
\[
\int_0^{\infty}x\mu(dx)=\tilde{H}+\sqrt{2}
\]
$\tilde{\Pm}$-almost surely. It follows from Fatou's lemma that 
\begin{equation}\label{eq:equation for showing tilde H = 0}
\begin{aligned}
\tilde{\expE}[\tilde{H}]+\sqrt{2}&=\tilde{\expE}\Big[\int_{\Rm_{\geq 0}}x\mu (dx)\Big]\leq \tilde{\expE}\Big[\liminf_{N\ra\infty}\int_{\Rm_{\geq 0}}x\mu _N(dx)\Big] \overset{\text{Fatou}}\leq  \liminf_{N\ra\infty}\tilde{\expE}\Big[\int_{\Rm_{\geq 0}}x\mu _N(dx)\Big]\leq \sqrt{2}.
\end{aligned}
\end{equation}
from which we see that $\tilde{H}=0$ $\tilde{\Pm}$-almost surely. It follows that $\mu_N\ra \pi_{\min}$ in $\Wah$ $\tilde{\Pm}$-almost surely as $N\ra\infty$. We conclude \eqref{eq:convergence centred by its leftmost final proof statement}.
\end{proof}
The only thing left is to strengthen the notion of convergence to 
\begin{equation}\label{eq:final claimed stronger notion of convergence}
\expE_{\vec{Y}^N\sim \psi^N}[\Wah_1(\Theta^N(\vec{Y}^N),\pi_{\min})]\ra 0\quad\text{as}\quad N\ra\infty.
\end{equation}
We define, as in \eqref{eq:mean measures}, the mean measures
\[
\xi^N(\cdot):=\tilde{\expE}[\mu_N(\cdot)].
\]
Since we must have equality in \eqref{eq:equation for showing tilde H = 0}, it follows that
\begin{equation}\label{eq:convergence of first moment}
\int_{\Rm_{\geq 0}}x\xi_N (dx)\ra\sqrt{2}=\int_{\Rm_{\geq 0}}x\pi_{\min}(dx)\quad\text{as}\quad N\ra\infty.
\end{equation}
We now take $g_n\in C(\Rm_{\geq 0};[0,1])$ such that $g_n\equiv 1$ on $[0,n]$ and $g_n\equiv 0$ on $[n+1,\infty)$, for $n\geq 1$, and further define $h_n:=1-g_n$. Then for any $n<\infty$ we have
\begin{equation}\label{eq:convergence with gn term}
\int_{\Rm_{\geq 0}}xg_n(x)\xi_N(dx)\ra \int_{\Rm_{\geq 0}}xg_n(x)\pi_{\min}(dx)\quad\text{as}\quad N\ra\infty.
\end{equation}
It follows, by considering the difference of \eqref{eq:convergence of first moment} and \eqref{eq:convergence with gn term}, that for all $n<\infty$ we have
\[
\int_{\Rm_{\geq 0}}xh_n(x)\xi_N(dx)\ra \int_{\Rm_{\geq 0}}xh_n(x)\pi_{\min}(dx)\quad\text{as}\quad N\ra\infty.
\]
We conclude that for any $\epsilon>0$ there exists $C_{\epsilon}<\infty$ such that
\[
\tilde{\expE} \Big[\int_{[C_{\epsilon},\infty)}x\mu_N(dx)\Big]\leq \epsilon\quad\text{for all}\quad N<\infty.
\]
We therefore conclude \eqref{eq:final claimed stronger notion of convergence}. This completes the proof of Theorem \ref{theo:main theorem}.
\qed 

\begin{appendix}

\section{Proof of Theorem \ref{theo:hydrodynamic limit for random ics}}\label{appendix:full appendix with hydro and sensitivity}

We begin by recalling some notation. We recall that $\calP_f$ was defined in \eqref{eq:calPfdefin} by 
\[
\calP_f(\Rm):=\{\mu\in \calP(\Rm):L(\mu)>-\infty\},\quad\text{where}\quad L(\mu):=\inf\{x:\mu([x,\infty))<1\}.
\]
We further recall that given $\mu\in\calP_f(\Rm)$ we write $\Psi_t(\mu)$ for the solution at time $t$ of \eqref{eq:hydrodynamic limit} with $u_0 = \mu$. This then defines a flow on $\calP_f(\Rm)$ (and $\calP(\Rm)$), denoted by $(\Psi_t)_{t\geq 0}$. 

The continuity (and hence measurability) of $\Psi_t:\calP(\Rm)\ra \calP(\Rm)$ will be proven in Appendix \ref{appendix:continuity of flow} using \eqref{eq:distance under coupling exponential bound}, found in Appendix \ref{appendix:hydrodynamic limit fixed ic}. We need to establish the measurability of $\Psi_t:\calP_f(\Rm)\ra \calP_f(\Rm)$ in order for the statement of Theorem \ref{theo:hydrodynamic limit for random ics} to make sense. This is because, given a random variable $\mu\sim \Lambda$ as in the statement of Theorem \ref{theo:hydrodynamic limit for random ics}, we need $\Psi_t$ to be measurable in order for $\Psi_t(\mu)$ to be a random variable. Without this, the law $\Law_{\mu\sim \Lambda}(\mu,\Psi_t(\mu))$ is meaningless.

We will therefore proceed as follows:
\begin{enumerate}
    \item In Appendix \ref{appendix:hydrodynamic limit fixed ic} we will establish Theorem \ref{theo:hydrodynamic limit conv in prob}, a hydrodynamic limit theorem in which we assume that the initial conditions converge in probability to a \textit{deterministic} initial condition. Neither the statement of this theorem nor its proof will require knowing the measurability of $\Psi_t:\calP(\Rm)\ra \calP(\Rm)$.
    \item In Appendix \ref{appendix:continuity of flow} we shall establish the continuity (hence measurability) of $\Psi_t:\calP(\Rm)\ra\calP(\Rm)$, obtaining an explicit estimate in terms of the $\Wah$ metric.
    \item Finally in Appendix \ref{appendix:hydrodynamic limit random ic}, we shall conclude the proof of Theorem \ref{theo:hydrodynamic limit for random ics}.
\end{enumerate}

We now define some more notation. We write 
\[
d_1(x,y):=\lvert x-y\rvert\wedge 1,\quad\text{for} \quad x,y\in \Rm. 
\]
The metric $\Wah$ defined in Definition \ref{defin:Wasserstein} is then the Wasserstein metric generated by $d_1$. For $\mu_1,\mu_2\in\calP(\Rm)$, we write $\mu_1\leq \mu_2$ if $\mu_2$ stochastically dominates $\mu_1$. Finally, for $\mu\in\calP(\Rm)$ and $c\in \Rm$ we write $\mu+c$ for $\mu$ shifted by $c$, that is 
\[
\mu+c:=(x\mapsto x+c)_{\#}\mu.
\]

\begin{theo}\label{theo:hydrodynamic limit conv in prob}
We suppose that we have a sequence of $N$-BBMs $(\vec{X}^N_t)_{t\geq 0}$ with initial conditions such that $m^N_0$ converges weakly in probability to some given $\mu\in\calP_f(\Rm)$. Then $m^N_t$ converges weakly in probability to $\Psi_t(\mu)$, for any given $t>0$.
\end{theo}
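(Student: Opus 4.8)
The plan is to prove Theorem~\ref{theo:hydrodynamic limit conv in prob} by a sandwiching argument, exploiting the monotonicity of the $N$-BBM in its initial condition together with the hydrodynamic limit of De~Masi et al.\ \cite[Theorem~1]{DeMasi2019} for i.i.d.\ absolutely continuous initial conditions. First I would set up the monotone coupling of the $N$-BBM. Given two configurations whose order statistics satisfy $x_{(1)}\le\cdots\le x_{(N)}$, $z_{(1)}\le\cdots\le z_{(N)}$ with $x_{(i)}\le z_{(i)}$ for all $i$, one runs both processes at the level of their order-statistics processes, driven by common Brownian motions and with selection events coupled by rank (at each branching, the particle of the same rank branches in both systems, and the current minimum is replaced by a copy of it). Since a killing-and-rebirth event replaces the minimum by a copy of the chosen particle, a direct check at selection times shows the order-statistic domination $X_{(i)}(t)\le Z_{(i)}(t)$, equivalently $\kappa(m^N_t(\vec x))\le \kappa(m^N_t(\vec z))$ pointwise, is preserved for all $t\ge0$. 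Writing $D_t:=\sum_{i=1}^N\big(Z_{(i)}(t)-X_{(i)}(t)\big)=N\,\Wah_1\big(m^N_t(\vec x),m^N_t(\vec z)\big)$, the Brownian contributions to $D_t$ cancel, and when the branching particle has rank $k$ the jump of $D$ is $(Z_{(k)}-X_{(k)})-(Z_{(1)}-X_{(1)})$; summing over the $N$ ranks (each branching at rate $1$) the generator satisfies $\mathcal{L}D=D-N(Z_{(1)}-X_{(1)})\le D$, so $\tfrac{d}{dt}\expE[D_t]\le\expE[D_t]$ and Grönwall yields the estimate \eqref{eq:distance under coupling exponential bound}, $\expE\big[\Wah_1(m^N_t(\vec x),m^N_t(\vec z))\big]\le e^{t}\,\Wah_1(m^N_0(\vec x),m^N_0(\vec z))$, uniformly in $N$. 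Letting $N\to\infty$ in this estimate along monotonically coupled i.i.d.\ samples and invoking \cite[Theorem~1]{DeMasi2019} gives, for absolutely continuous $\mu'\le\mu''$ in $\calP_f(\Rm)$, that $\Wah_1(\Psi_t(\mu'),\Psi_t(\mu''))\le e^{t}\,\Wah_1(\mu',\mu'')$; monotonicity of $\Psi_t$ for the stochastic order follows from this, or directly from the comparison principle \cite[Theorem~1.2]{Berestycki2018}.

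Next I would reduce to a sandwichable initial condition. Fix $t>0$ and $\epsilon>0$. Using $\mu\in\calP_f(\Rm)$, construct absolutely continuous $\mu^-_\epsilon\le\mu\le\mu^+_\epsilon$ in $\calP_f(\Rm)$ satisfying the hypotheses of \cite[Theorem~1]{DeMasi2019}, with $\Wah_1(\mu^-_\epsilon,\mu^+_\epsilon)<\epsilon$ and with the CDF inequalities $\kappa(\mu^-_\epsilon)<\kappa(\mu)<\kappa(\mu^+_\epsilon)$ strict, with a uniform gap on each compact interval on which $\kappa(\mu)$ is bounded away from $0$ and $1$ (obtained by convolving $\mu$ with a small kernel, shifting, and placing a small fraction of mass at a far-away point). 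Because $\mu\in\calP_f(\Rm)$, the hypothesis $m^N_0\to\mu$ weakly in probability forces the fraction of particles to the left of $L(\mu)-1$, and to the right of any fixed large $R$, to be $o_\Pm(1)$. A short relaxation step then shows that, up to an $o_\Pm(1)$ error in $\Wah$ uniform for $s\le t$, the process may be replaced by one in which these $o(N)$ outlying particles no longer influence the bulk: the far-left ones are absorbed into the bulk within a vanishing time (leftmost particles are killed at rate $N-1$, so $o(N)$ of them disappear in time $o_\Pm(1)$, during which the bulk moves by $o_\Pm(1)$), and the far-right ones merely perform Brownian motion and stay far right, contributing $o_\Pm(1)$ to $\Wah$; removing $o(N)$ particles from an $N$-BBM perturbs its hydrodynamic limit only by $o(1)$. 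The upshot is that, on an event of probability tending to $1$, the process is, up to $o_\Pm(1)$ in $\Wah$, order-statistic sandwiched between two $N$-BBMs started from i.i.d.\ $\mu^\pm_\epsilon$ configurations, using the strict CDF gap, the uniform-on-$[L(\mu)-1,R]$ weak-in-probability convergence, and Glivenko--Cantelli for the i.i.d.\ samples.

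Finally I would close the sandwich. Denoting by $m^{N,\pm}_t$ the empirical measures of the two i.i.d.-$\mu^\pm_\epsilon$ $N$-BBMs, the monotone coupling propagates the initial ordering to $m^{N,-}_t\le m^N_t\le m^{N,+}_t$ (up to the $o_\Pm(1)$ error). By \cite[Theorem~1]{DeMasi2019}, $m^{N,\pm}_t\to\Psi_t(\mu^\pm_\epsilon)$ weakly in probability, so at continuity points $\kappa(\Psi_t(\mu^-_\epsilon))-o_\Pm(1)\le\kappa(m^N_t)\le\kappa(\Psi_t(\mu^+_\epsilon))+o_\Pm(1)$. Since the first step gives $\Wah(\Psi_t(\mu^-_\epsilon),\Psi_t(\mu^+_\epsilon))\le \Wah_1(\Psi_t(\mu^-_\epsilon),\Psi_t(\mu^+_\epsilon))\le e^{t}\epsilon$ and $\Psi_t(\mu^-_\epsilon)\le\Psi_t(\mu)\le\Psi_t(\mu^+_\epsilon)$, this squeezes $m^N_t\to\Psi_t(\mu)$ weakly in probability once we let $\epsilon\downarrow0$, which is the claim.

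I expect the main obstacle to be the relaxation/outlier step. The hypothesis $m^N_0\to\mu$ weakly in probability gives no control whatsoever on the extreme particles, so one must show rigorously that $o(N)$ particles arbitrarily far from the bulk cannot influence $m^N_t$ at a fixed later time $t>0$: for the far-left particles this rests on a quantitative clearing estimate for the selection dynamics together with the fact that the bulk barely moves over the (vanishing) clearing time, and for the far-right particles on a Brownian-escape estimate and on the stability of the hydrodynamic limit under $o(N)$ perturbations of the particle number and selection rate. Verifying that the monotone coupling genuinely preserves the order statistics across selection times (in particular the bookkeeping at rank crossings occurring between branchings) is a further point requiring care, though it is by now standard.
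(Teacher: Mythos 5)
Your proposal takes a genuinely different route from the paper's, and it has a genuine gap in exactly the place you flag.

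The paper also proceeds in two steps. Step 1 (mollify to an absolutely continuous initial condition) is close to yours: it convolves $\mu$ with bump functions $\phi_\pm$ supported in $(-\epsilon,0)$ and $(0,\epsilon)$, builds i.i.d.\ configurations $\vec X_0^{N,-}\le\vec X_0^N\le\vec X_0^{N,+}$ coordinatewise with $\vec X_0^{N,+}=\vec X_0^{N,-}+\epsilon$, couples the three $N$-BBMs by a rank coupling (normally reflected diffusion in the Weyl chamber plus a common Poisson clock selecting a uniform rank to branch), and concludes by the triangle inequality since $\Wah(m^{N}_t,m^{N,-}_t)\le\epsilon$ deterministically and $\Wah(\Psi_t(\mu_-),\Psi_t(\mu))\le\epsilon$ by the comparison principle. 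But the paper's Step 2 (removing the i.i.d.\ hypothesis) is completely different from yours. It \emph{never} attempts an order-statistic sandwich between $m^N_0$ and i.i.d.\ samples. Instead it constructs a coupling between an arbitrary $N$-BBM and an i.i.d.-started one by repeatedly taking a \emph{Monge-optimal} matching at each branching time, and shows $\expE[\Wah(m^N_t,\tilde m^N_t)]\le e^t\,\expE[\Wah(m^N_0,\tilde m^N_0)]$, where $\Wah$ is the \emph{bounded} Wasserstein metric with cost $|x-y|\wedge 1$. Because $\Wah$ metrises weak convergence and is bounded by $1$, the hypothesis $m^N_0\to\mu$ weakly in probability together with Glivenko--Cantelli for $\tilde m^N_0$ immediately gives $\expE[\Wah(m^N_0,\tilde m^N_0)]\to0$ with no tail control whatsoever, and the estimate propagates this to time $t$.

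This is precisely what your proposal cannot do. Your rank coupling and the slick generator identity $\mathcal L D = D - N(Z_{(1)}-X_{(1)})\le D$ require order-statistic domination $X_{(i)}\le Z_{(i)}$ for \emph{all} $i$, hence domination out to the extreme particles on both ends. Weak convergence of $m^N_0$ to $\mu$ in probability gives no control of those extremes: for each $N$ a vanishing fraction of particles may sit arbitrarily far left or right, and the assumption $\mu\in\calP_f(\Rm)$ only controls the left tail of the \emph{limit}, not of the prelimit configurations. Your ``relaxation/outlier'' step is the device meant to patch this, and you are candid that it is where the argument is incomplete. The claims in it are not routine: that the bulk is perturbed by only $o_\Pm(1)$ in $\Wah$ during the clearing window needs to control the $o(N)$ duplications injected into the bulk; that removing $o(N)$ particles from an $N$-BBM perturbs the hydrodynamic limit by $o(1)$ is a stability statement that is not contained in \cite{DeMasi2019} and would itself need a proof roughly of the same difficulty as the theorem you are trying to prove; and working in $\Wah_1$ rather than the bounded $\Wah$ adds an integrability requirement that $\calP_f(\Rm)$ does not supply. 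The paper's Monge-optimal coupling in the bounded metric is precisely the device that renders all of these issues moot: one does not need the two empirical measures to be stochastically ordered, one does not need to identify or clear outliers, and one never needs a moment bound. I would recommend abandoning the order-statistic sandwich for Step~2 and replacing it with a contraction estimate in $\Wah$ between the arbitrary and the i.i.d.\ initial configuration.
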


\subsection{Proof of Theorem \ref{theo:hydrodynamic limit conv in prob}}\label{appendix:hydrodynamic limit fixed ic}
De Masi et al. \cite{DeMasi2019} imposed two assumptions on the initial conditions of the $N$-BBMs which we would like to remove: 
\begin{enumerate}
\item that $\mu$ has an absolutely continuous density;
\item that at time $0$ and for all $N$, $X^{1}_0,\ldots,X^{N}$ are independent and identically distributed.
\end{enumerate}

We will proceed in two steps, removing the first assumption in the first step, then the second assumption in the second step.

\underline{Step 1}

We proceed by a sandwiching argument. We fix arbitrary $\epsilon>0$. We take $\phi_{-}\in C_c^{\infty}((-\epsilon,0);\Rm_{\geq 0})$ with $\int_{\Rm}(\phi_{-})(x)dx=1$, and $\phi_{+}(x):=\phi_{-}(x-\epsilon)\in C_c^{\infty}((0,\epsilon))$. Then by convolution we obtain 
\[
\mu_{-}:=\phi_{-}\ast \mu \leq \mu\leq \mu_{+}:=\phi_{+}\ast \mu.
\]
We observe that $\mu_{+},\mu_{-}$ have absolutely continuous densities with 
\[
L(\mu_{-}),L(\mu_{+})>-\infty\quad \text{and}\quad \mu_{+}=\mu_{-}+\epsilon.
\]

We define $\vec{X}^N_0=(X^1_0,\ldots,X^N_0)$ to be such that $X^{1}_0,\ldots,X^N_0$ are independent and identically distributed according to $\mu$. We then take $\delta_{i}$ for $1\leq i\leq N$ to be independent random variables with distribution $\phi_{-}(x)dx$, and define $X^{i,-}_0:=X^{i}_0+\delta_0$. We set $X^{i,+}_0:=X^{i,-}_0+\epsilon$. We define $\vec{X}^{N,-}_0:=(X^{1,-}_0,\ldots,X^{N,-}_0)$ and $\vec{X}^{N,+}_0:=(X^{1,+}_0,\ldots,X^{N,+}_0)$, observing that the sequences $X^{1,-}_0,\ldots,X^{N,-}_0$ and $X^{1,+}_0,\ldots,X^{N,+}_0$ are each independent and identically distributed according to $\mu_{-}$ and $\mu_+$ respectively (although the two sequences are obviously not independent of each other).

We now permute the indices of $\vec{X}^{N,-}_0$, $\vec{X}^{N}_0$ and $\vec{X}^{N,+}_0$ respectively so that $X^1_0\leq X^2_0\leq\ldots\leq X^N_0$, and similarly for the other two. We observe that with this labelling of the indices,
\[
X^{i,-}_0\leq X^i_0\leq X^{i,+}_0\quad\text{and}\quad X^{i,+}_0=X^{i,-}_0+\epsilon
\]
for all $i\in \{1,\ldots,N\}$.

We now take $N$-BBMs $\vec{X}^{N,-}_t$, $\vec{X}^N_t$ and $\vec{X}^{N,+}_t$ with initial conditions given by $\vec{X}^{N,-}_0$, $\vec{X}^{N}_0$ and $\vec{X}^{N,+}_0$ respectively, coupled as follows. In between killing times we define all three particle systems to be given by normally reflected diffusions in $\{(x_1,\ldots,x_N)\in \Rm^N:x_1\leq \ldots \leq x_N\}$, driven by the same $N$-dimensional Brownian motion (which is possible since \cite{Lions1984} established the existence of strong solutions for the corresponding SDE). The killing events are catalysed by the same rate-$N$ Poisson clock, at which time we choose a rank $k\in \{1,\ldots,N\}$ uniformly at random and declare that the $k^{\text{th}}$ ranked particle in each particle system branches. Indices are then permuted at this time to preserve the ordering. We see that
\begin{equation}\label{eq:inequalities between particles appendix pf}
X^{i,-}_t\leq X^i_t\leq X^{i,+}_t\quad\text{and}\quad X^{i,+}_t=X^{i,-}_t+\epsilon
\end{equation}
for all $i\in \{1,\ldots,N\}$ and $t\geq 0$. 

The empirical measures of the three $N$-BBMs are denoted by $m^{N,-}_t:=\Theta^N_{\#}\vec{X}^{N,-}_t$, $m^N_t:=\Theta^N_{\#}\vec{X}^N_t$ and $m^{N,+}_t:=\Theta^N_{\#}\vec{X}^{N,+}_t$. By applying \cite[Theorem 1]{DeMasi2019} to $\vec{X}^{N,-}$, we see that 
\[
\expE[\Wah(m^{N,-}_t,\Psi_t(\mu_{-}))]\ra 0
\]
as $N\ra\infty$. On the other hand, it follows from \eqref{eq:inequalities between particles appendix pf} that
\[
\expE[\Wah(m^{N}_t,m^{N,-}_t)]\leq \expE[\Wah(m^{N,+}_t,m^{N,-}_t)]\leq \epsilon.
\]
Moreover, since $\mu_-\leq \mu\leq \mu_+=\mu_-+\epsilon$, it follows from the comparison principle \cite[Theorem 1.1]{Berestycki2018} that $\Psi_t(\mu_-)\leq \Psi_t(\mu)\leq \Psi_t(\mu_+)=\Psi_t(\mu_-)+\epsilon$, so that $\Wah(\Psi_t(\mu_-),\Psi_t(\mu))\leq \epsilon$. 
It follows that 
\begin{align*}
&\limsup_{N\ra\infty}\expE[\Wah(m^N_t,\Psi_t(\mu))]\\&\leq \limsup_{N\ra\infty}\expE[\Wah(m^N_t,m^{N,-}_t)]+\limsup_{N\ra\infty}\expE[\Wah(m^{N,-}_t,\Psi_t(\mu_-))]+\Wah(\Psi_t(\mu_-),\Psi_t(\mu))\leq 2\epsilon.   
\end{align*}

Using that $\epsilon>0$ is arbitrary, we have removed the assumption that $\mu$ has a density.

\underline{Step 2}

We now fix $\mu\in \calP_f(\Rm)$. We take two sequences of $N$-BBMs. The first, denoted by $(\vec{X}^N_t)_{t\geq 0}$ and with empirical measure $m^N_t:=\Theta^N_{\#}\vec{X}^N_t$, is such that $m^N_0\ra \mu$ weakly in probability as $N\ra\infty$. The second, denoted by $\tilde{\vec{X}}^N_t$ and with empirical measure $\tilde{m}^N_t:=\Theta^N_{\#}\tilde{\vec{X}}^N_t$, is such that $\tilde{X}^1_0,\ldots,\tilde{X}^N_0$ are independent and identically distributed with distribution $\mu$. Our goal is to construct a coupling of these two $N$-BBMs such that for all $N<\infty$ we have
\begin{equation}\label{eq:distance under coupling exponential bound}
\expE[\Wah(m^N_t,\tilde{m}^N_t)]\leq e^t\expE[\Wah(m^N_0,\tilde{m}^N_0)]\quad \text{for all}\quad 0\leq t<\infty.
\end{equation}

The following shall employ basic concepts from optimal transport which can be found in \cite{Villani2009}. Our strategy will be to use the solution of the Monge problem at successive times to construct a coupling between the $N$-BBMs. We shall establish that the optimal solution of the Monge problem gives an optimal solution of the Kantorovich problem (by which $\Wah$ is defined), so that the cost of the optimal solution of the Monge problem is exactly the $\Wah$ distance between the particle systems. This will allow us to control the growth of $\Wah(m^N_t,\tilde{m}^N_t)$.

Given two sequences in $\Rm$ of the same length, $(x_1,\ldots,x_n)$ and $(y_1,\ldots,y_n)$, it is trivial that there exists a bijection $\iota:\{1,\ldots,N\}\ra \{1,\ldots,N\}$ minimising $\frac{1}{n}\sum_{i=1}^nd_1(x_i,y_{\iota(i)})$ - the solution of the Monge problem. We fix such an $\iota$. Then $\{(x_i,y_{\iota(i)}):1\leq i\leq n\}$ is \textit{$c$-cyclically monotone}, the definition of this being given in \cite[Definition 5.1]{Villani2009}. It then follows from \cite[Theorem 1.3]{Pratelli2008} that $\frac{1}{n}\sum_{i=1}^n\delta_{(x_i,y_{\iota(i)})}$ (with the projections $(x,y)\mapsto x$ and $(x,y)\mapsto y$) is an optimal coupling for the Kantorovich problem for $d_1$. That is,
\[
\Wah(\frac{1}{N}\sum_{i=1}^N\delta_{x_i},\frac{1}{N}\sum_{i=1}^N\delta_{y_i})=\frac{1}{N}\sum_{i=1}^Nd_1(x_i,y_{\iota(i)}).
\]
We call such a coupling a ``Monge-optimal coupling''. We note it may not be unique. 

For the coupled $N$-BBMs $\vec{X}^N_t$ and $\tilde{\vec{X}}^N_t$ to be constructed, a ``Monge-optimal coupling'' at time $t$, denoted by $\iota_t:\{1,\ldots,N\}\ra \{1,\ldots,N\}$, is a bijection such that 
\[
\Wah(m^N_t,\tilde{m}^N_t)=\frac{1}{N}\sum_{i=1}^Nd_1(X^i_t,\tilde{X}^{\iota_t(i)}_t).
\]

We now construct our desired coupling. We begin by defining a sequence of $\text{exp}(N)$ killing times $0<\tau_1<\tau_2<\ldots$, which will catalyse branching-selection events for both processes. We further define $\tau_0:=0$. Throughout, whenever we take a Monge-optimal coupling, it should be understood that we take one uniformly at random if it's not unique. At time $0$, we take a Monge-optimal coupling $\iota_0$. We then drive $X^i_t$ and $\tilde{X}^{\iota_0(i)}_t$ by the same Brownian motion $W^i_t$ up to time $\tau_1$. We see that 
\[
\Wah(m^N_s,\tilde{m}^N_s)=\frac{1}{N}\sum_{i=1}^Nd_1(X^i_s,\tilde{X}^{\iota_s(i)}_s)\leq \frac{1}{N}\sum_{i=1}^Nd_1(X^i_s,\tilde{X}^{\iota_0(i)}_s)=\frac{1}{N}\sum_{i=1}^Nd_1(X^i_0,\tilde{X}^{\iota_0(i)}_0)= \Wah(m^N_0,\tilde{m}^N_0)
\]
for all $0\leq s<\tau_1$.

We now let $i_{\ast}\in \{1,\ldots,N\}$ and $j_{\ast}\in \{1,\ldots,N\}$ be the index of the minimal particle in $\vec{X}^N_{\tau_1-}$ and $\tilde{\vec{X}}^N_{\tau_1-}$ respectively. We must now determine which particle in each particle systems branches. 

With probability $\frac{1}{N}$, we select the minimal particle for both $N$-BBMs - that is the particle $X^{i_{\ast}}_{\tau_1-}$ and $X^{j_{\ast}}_{\tau_1-}$ respectively, in which case $\Wah(m_{\tau_1},\tilde{m}_{\tau_1})=\Wah(m_{\tau_1-},\tilde{m}_{\tau_1-})$ (as the two particle systems haven't changed). 

Otherwise we take a Monge-optimal coupling $\iota':\{1,\ldots,N\}\setminus \{i_{\ast}\}\ra \{1,\ldots,N\}\setminus \{j_{\ast}\}$. We claim that 
\begin{equation}\label{eq:Monge optimal restriction}
\frac{1}{N}\sum_{i\neq i_{\ast}}d_1( X^{i}_{\tau_1-},\tilde{X}^{\iota'(i)}_{\tau_1-})\leq \Wah (m_{\tau_1-},\tilde{m}_{\tau_1-}).
\end{equation}
To see this, we take the Monge-optimal coupling $\iota_{\tau_1-}:\{1,\ldots,N\}\ra\{1,\ldots,N\}$. If $\iota_{\tau_1-}(i_{\ast})=j_{\ast}$, then by restriction we have
\begin{align*}
&\frac{1}{N}\sum_{i\neq i_{\ast}}d_1( X^{i}_{\tau_1-}-\tilde{X}^{\iota'(i)}_{\tau_1-})\leq \frac{1}{N}\sum_{i\neq i_{\ast}}d_1( X^{i}_{\tau_1-}-\tilde{X}^{\iota_{\tau_1-}(i)}_{\tau_1-})\\
&\leq \frac{1}{N}\sum_{i}d_1( X^{i}_{\tau_1-}-\tilde{X}^{\iota_{\tau_1-}(i)}_{\tau_1-})=\Wah (m_{\tau_1-},\tilde{m}_{\tau_1-}),
\end{align*}
so we have established \eqref{eq:Monge optimal restriction} in this case. Otherwise, there exists $i'\in \{1,\ldots,N\}\setminus \{i_{\ast}\}$ and $j'\in\{1,\ldots,N\}\setminus \{j_{\ast}\}$ such that $\iota_{\tau_1-}(i')=j_{\ast}$ and $\iota_{\tau_1-}(i_{\ast})=j'$. We define the coupling
\[
\begin{aligned}
\iota'':\{1,\ldots,N\}\setminus \{i_{\ast}\}&\ra \{1,\ldots,N\}\setminus \{j_{\ast}\},\\
i&\mapsto \begin{cases}
j',\quad i=i'\\
\iota_{\tau_1-}(i),\quad \text{otherwise.}
\end{cases}
\end{aligned}
\]
We observe that $d_1( X^{i'}_{\tau_1-},\tilde{X}^{j'}_{\tau_1-})\leq d_1( X^{i'}_{\tau_1-},\tilde{X}^{j_{\ast}}_{\tau_1-})+d_1( X^{i_{\ast}}_{\tau_1-},\tilde{X}^{j'}_{\tau_1-})$, which may be seen by considering separately the possibilities $X^{i'}_{\tau_1-}\leq X^{j'}_{\tau_1-}$ (in which case the left hand side is at most the second term on the right) and $X^{i'}_{\tau_1-}\geq X^{j'}_{\tau_1-}$  (in which case the left hand side is at most the first term on the right). From this it follows that
\[
\begin{aligned}
&\frac{1}{N}\sum_{i\neq i_{\ast}}d_1(X^{i}_{\tau_1-},\tilde{X}^{\iota''(i)}_{\tau_1-})= \frac{1}{N}\sum_{i\neq i_{\ast},i'}d_1( X^{i}_{\tau_1-},\tilde{X}^{\iota_{\tau_1-}(i)}_{\tau_1-})+\frac{1}{N}d_1( X^{i'}_{\tau_1-},\tilde{X}^{j'}_{\tau_1-})
\\ &\leq \frac{1}{N}\sum_{i\neq i_{\ast},i'}d_1( X^{i}_{\tau_1-},\tilde{X}^{\iota_{\tau_1-}(i)}_{\tau_1-}) +\frac{1}{N}d_1( X^{i'}_{\tau_1-},\tilde{X}^{j_{\ast}}_{\tau_1-})+\frac{1}{N}d_1( X^{i_{\ast}}_{\tau_1-},\tilde{X}^{j'}_{\tau_1-})=\Wah(m_{\tau_1-},\tilde{m}_{\tau_1-}).
\end{aligned}
\]
We then obtain \eqref{eq:Monge optimal restriction} from the optimality of $\iota'$. 

We now choose $i\in \{1,\ldots,N\}\setminus \{i_{\ast}\}$ uniformly at random, declare that $X^{i_{\ast}}_{\tau_1-}$ jumps onto $X^i_{\tau_1-}$, and declare that $\tilde{X}^{j_{\ast}}_{\tau_1-}$ jumps onto $\tilde{X}^{\iota'(i_{\ast})}_{\tau_1-}$. We see that the expected value of $d_1(X^{i_{\ast}}_{\tau_1},X^{j_{\ast}}_{\tau_1})$ after doing this is at most $\frac{N}{N-1}\Wah(m_{\tau_1-},\tilde{m}_{\tau_1-})$. 

Overall, we see that the expected value of $\Wah(m_{\tau_1},\tilde{m}_{\tau_1})$, conditional on the $N$-BBMs at time $\tau_1-$, is at most 
\[
\frac{1}{N}\Wah(m_{\tau_{1}-},\tilde{m}_{\tau_1-})+ \frac{N-1}{N}\Big[\Wah(m_{\tau_1-},\tilde{m}_{\tau_1-})+\frac{1}{N-1}\Wah(m_{\tau_1-},\tilde{m}_{\tau_1-})\Big]=(1+\frac{1}{N})\Wah(m_{\tau_1-},\tilde{m}_{\tau_1-}).
\]
In the above terms on the left-hand side, the first term corresponds to the possibility of jumping onto the minimal particle (so that the particle systems don't change), whilst the second and third terms (in the square brackets) correspond to the event whereby this does not happen. Of these, the former is the upper bound \eqref{eq:Monge optimal restriction} for $\frac{1}{N}\sum_{i\neq i_{\ast}}d_1(X^{i}_{\tau_1},X^{\iota'(i)}_{\tau_1})$ (corresponding to particles which haven't moved during the jump), while the latter is an upper bound for the expected value of $\frac{1}{N}d_1(X^{i_{\ast}}_{\tau_1},X^{j_{\ast}}_{\tau_1})$ immediately after the jump.

We repeat this coupling inductively, obtaining $(\vec{X}^N_t)_{t\geq 0}$ and $(\tilde{\vec{X}}^N_t)_{t\geq 0}$ for all time. We now fix $t>0$ and define the discrete-time process
\[
M_k:=\Wah(m_{\tau_k\wedge t},\tilde{m}_{\tau_k\wedge t})-\Wah(m_{0},\tilde{m}_{0})-\frac{1}{N}\sum_{\substack{0< r\leq  k\\\text{such that}\\0< \tau_{r}\leq t}}\Wah(m_{\tau_r-},\tilde{m}_{\tau_r-}).
\]
Writing $(\mathcal{F}_t)_{t\geq 0}$ for the filtration on which we have defined the above coupled $N$-BBMs, we see from the above that $M_k$ is a discrete-time $(\mathcal{F}_{\tau_k\wedge t})_{k\geq 0}$-supermartingale. Moreover, since $\Wah\leq 1$ and the number of killing events in time $t$ is $\text{Poi}(Nt)$, $\lvert M_k\rvert$ is stochastically dominated by $1+\frac{1}{N}\text{Poi}(Nt)$ for all $k$, hence is uniformly integrable. It follows that
\[
\expE[\Wah(m_{ t},\tilde{m}_{ t})]\leq \expE[\Wah(m_{0},\tilde{m}_{0})]+\frac{1}{N}\expE[\sum_{0<\tau_k\leq t}\Wah(m_{\tau_k-},\tilde{m}_{\tau_k-})]\quad \text{for all}\quad 0\leq t<\infty.
\]
On the other hand, as in \eqref{eq:mg 3 tightness pf} we have that
\[
\frac{1}{N}\sum_{0< \tau_k\leq t}\Wah(m_{\tau_k-},\tilde{m}_{\tau_k-})-\int_0^t\Wah(m_{s},\tilde{m}_s)ds\quad\text{is an $(\mathcal{F}_t)_{t\geq 0}$ martingale.}
\]
We see that
\[
\expE[\Wah(m_t,\tilde{m}_t)]\leq \expE[\Wah(m_0,\tilde{m}_0)]+\int_0^t\expE[\Wah(m_s,\tilde{m}_s)]ds\quad\text{for all $0\leq t<\infty$.}
\]
We therefore obtain \eqref{eq:distance under coupling exponential bound} by applying Gronwall's inequality.

We now apply \cite[Theorem 1]{DeMasi2019}, with the assumption that $\mu$ is absolutely continuous removed (using Step 1), to see that $\tilde{m}^N_t\ra \Psi_t(\mu)$ in $\Wah$ in probability as $N\ra\infty$. Since $\expE[\Wah(m^N_0,\tilde{m}^N_0)]\ra 0$ as $N\ra\infty$, it follows from \eqref{eq:distance under coupling exponential bound} that $\expE[\Wah(m^N_t,\tilde{m}^N_t)]\ra 0$ as $N\ra\infty$. It therefore follows that $m^N_t\ra \Psi_t(\mu)$ in $\Wah$ in probability.
\qed


\subsection{Sensitivity of solutions of \eqref{eq:hydrodynamic limit} to the initial condition}\label{appendix:continuity of flow}
\begin{lem}\label{lem:sensitivity depending on ics}
Let $u(x,t)$ and $v(x,t)$ be two solutions of \eqref{eq:hydrodynamic limit} with initial conditions $u_0,v_0\in \calP(\Rm)$. Then we have that
\begin{equation}\label{eq:sensitivity on ics for free boundary PDE}
    \Wah(u_t,v_t)\leq e^{t}\Wah(u_0,v_0)
\end{equation}
for all $t>0$. In particular, $\Psi_t:\calP(\Rm)\ra \calP(\Rm)$ is continuous (hence measurable).
\end{lem}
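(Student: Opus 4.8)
The plan is to push the particle-level contraction estimate \eqref{eq:distance under coupling exponential bound} forward to the level of the PDE, using the hydrodynamic limit Theorem \ref{theo:hydrodynamic limit conv in prob} as the bridge. I would first prove \eqref{eq:sensitivity on ics for free boundary PDE} for $u_0,v_0\in\calP_f(\Rm)$, and then recover the general case $u_0,v_0\in\calP(\Rm)$ by a monotone approximation.

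For the main case, fix $u_0,v_0\in\calP_f(\Rm)$ and let $\nu\in\calP(\Rm\times\Rm)$ be an optimal coupling realising $\Wah(u_0,v_0)$, so that its marginals are $u_0$ and $v_0$ and $\int d_1\,d\nu=\Wah(u_0,v_0)$ (existence of optimal couplings is standard, see \cite{Villani2009}). For each $N<\infty$ draw $(\xi_1,\eta_1),\dots,(\xi_N,\eta_N)$ i.i.d. from $\nu$ and run two $N$-BBMs started from $\vec{X}^N_0:=(\xi_1,\dots,\xi_N)$ and $\tilde{\vec{X}}^N_0:=(\eta_1,\dots,\eta_N)$, coupled as in the proof of \eqref{eq:distance under coupling exponential bound}; that construction is specified conditionally on the current configuration, so it applies verbatim when the initial configuration is random, and \eqref{eq:distance under coupling exponential bound} survives a further expectation over the initial randomness. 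Using the index-preserving coupling of $m^N_0$ and $\tilde m^N_0$ we get $\Wah(m^N_0,\tilde m^N_0)\le \tfrac1N\sum_{i=1}^N d_1(\xi_i,\eta_i)$, hence $\expE[\Wah(m^N_0,\tilde m^N_0)]\le \Wah(u_0,v_0)$, and therefore
\[
\expE[\Wah(m^N_t,\tilde m^N_t)]\le e^t\,\Wah(u_0,v_0)\qquad\text{for all }N<\infty\text{ and }t\ge 0.
\]

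By the strong law of large numbers for empirical measures, $m^N_0\ra u_0$ and $\tilde m^N_0\ra v_0$ weakly almost surely, hence weakly in probability; since $u_0,v_0\in\calP_f(\Rm)$, Theorem \ref{theo:hydrodynamic limit conv in prob} yields $m^N_t\ra\Psi_t(u_0)=u_t$ and $\tilde m^N_t\ra\Psi_t(v_0)=v_t$ weakly in probability. As $\Wah$ metrises weak convergence and is bounded by $1$, bounded convergence gives $\expE[\Wah(m^N_t,u_t)]\ra 0$ and $\expE[\Wah(\tilde m^N_t,v_t)]\ra 0$, so that by the triangle inequality
\[
\Wah(u_t,v_t)\le \expE[\Wah(u_t,m^N_t)]+\expE[\Wah(m^N_t,\tilde m^N_t)]+\expE[\Wah(\tilde m^N_t,v_t)]\le e^t\Wah(u_0,v_0)+o(1)
\]
as $N\ra\infty$, whence \eqref{eq:sensitivity on ics for free boundary PDE} for $u_0,v_0\in\calP_f(\Rm)$. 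For general $u_0\in\calP(\Rm)$, let $u_0^{(m)}$ be $u_0$ with all its mass on $(-\infty,-m]$ moved to an atom at $-m$; then $u_0^{(m)}\in\calP_f(\Rm)$, $u_0^{(m)}$ decreases in the stochastic order to $u_0$ and $\Wah(u_0^{(m)},u_0)\ra 0$, and similarly for $v_0$. By the comparison principle \cite[Theorem 1.2]{Berestycki2018} the solutions $\Psi_t(u_0^{(m)})$ are stochastically decreasing in $m$, their limit is a solution of \eqref{eq:hydrodynamic limit} with initial datum $u_0$, hence equals $u_t$ by uniqueness \cite[Theorem 1.1]{Berestycki2018}; so $\Wah(\Psi_t(u_0^{(m)}),u_t)\ra 0$, and passing to the limit in the $\calP_f$ estimate gives \eqref{eq:sensitivity on ics for free boundary PDE} in general. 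Finally, \eqref{eq:sensitivity on ics for free boundary PDE} exhibits $\Psi_t$ as an $e^t$-Lipschitz map of $(\calP(\Rm),\Wah)$ into itself, hence continuous, hence Borel measurable.

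I do not expect a serious obstacle, since the real work is already contained in \eqref{eq:distance under coupling exponential bound} and Theorem \ref{theo:hydrodynamic limit conv in prob}. The two points that need care are: choosing the $N$-particle approximation so that $\expE[\Wah(m^N_0,\tilde m^N_0)]$ is genuinely controlled by $\Wah(u_0,v_0)$ — which is why one samples i.i.d. from an \emph{optimal} plan rather than from, say, the comonotone coupling, which need not be optimal for the bounded cost $d_1$ — and the final monotone-approximation step, where one must confirm that the stochastically decreasing limit of the approximating solutions really is the solution started from the limiting initial datum, a stability fact that should be read off from \cite{Berestycki2018}.
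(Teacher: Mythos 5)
Your argument for $u_0,v_0\in\calP_f(\Rm)$ is correct and is essentially the paper's own. The one stylistic difference is that you sample the initial pairs i.i.d.\ from an optimal $d_1$-plan $\nu$, whereas the paper simply takes $\vec{X}^N_0\sim u_0^{\otimes N}$ and $\tilde{\vec{X}}^N_0\sim v_0^{\otimes N}$ independently and passes to the limit: since $\Wah(m^N_0,\tilde m^N_0)\to\Wah(u_0,v_0)$ almost surely by the law of large numbers for empirical measures and the continuity of $\Wah$, both constructions yield $\limsup_N\expE[\Wah(m^N_0,\tilde m^N_0)]=\Wah(u_0,v_0)$, which is all that is needed. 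Your version returns the cleaner pointwise bound $\expE[\Wah(m^N_0,\tilde m^N_0)]\le\Wah(u_0,v_0)$ for every $N$, but that extra uniformity is not exploited.

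Where you and the paper genuinely diverge, and where your argument has a gap, is in removing the restriction $u_0,v_0\in\calP_f(\Rm)$. You truncate spatially at $-m$ so that $u_0^{(m)}\in\calP_f(\Rm)$, run the PDE, and appeal to the assertion that the stochastically decreasing limit of the $\Psi_t(u_0^{(m)})$ \emph{is} the solution started from $u_0$. That assertion is, for the special case of monotone approximating data, exactly the continuity-in-initial-datum statement this lemma exists to establish; it is not among the results the paper draws from \cite{Berestycki2018}, and the fact that Lemma \ref{lem:sensitivity depending on ics} has its own proof is good evidence that it cannot simply be read off from that reference. Justifying it would require genuine additional work (e.g.\ uniform control of the truncated free boundaries, parabolic compactness, identification of the limit). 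The paper avoids the issue with a shorter device: for any $\epsilon>0$ the PDE itself guarantees $u_\epsilon,v_\epsilon\in\calP_f(\Rm)$, since the free boundary is finite at positive times by \cite{Berestycki2018}; the semigroup property applied from time $\epsilon$ then gives $\Wah(u_t,v_t)\le e^{t-\epsilon}\Wah(u_\epsilon,v_\epsilon)$, and the defining property of a solution that $u_\epsilon\to u_0$ and $v_\epsilon\to v_0$ weakly as $\epsilon\downarrow 0$ lets one send $\epsilon\downarrow 0$ with no auxiliary stability lemma at all. Replacing your spatial truncation with this time-shift closes the gap.
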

\begin{proof}[Proof of Theorem \ref{lem:sensitivity depending on ics}]
We firstly assume that $u_0,v_0\in \calP_f(\Rm)$. We take a sequence of $N$-BBMs $(\vec{X}^N_t)_{t\geq 0}$ and $(\tilde{\vec{X}}^N_t)_{t\geq 0}$ with initial conditions given by $\vec{X}^N_0\sim u_0^{\otimes N}$, $\tilde{\vec{X}}^N_0\sim v_0^{\otimes N}$. We write $m_t^N=\Theta^N(\vec{X}^N_t)$, $\tilde{m}^N_t=\Theta^N(\tilde{\vec{X}}^N_t)$ for the corresponding empirical measures. Then by the proof of \eqref{eq:distance under coupling exponential bound} (the assumptions being made here on the initial conditions are slightly different but no changes need to be made to the proof) we have
\[
\expE[\Wah(m^N_t,\tilde{m}^N_t)]\leq e^t\expE[\Wah(m^N_0,\tilde{m}^N_0)].
\]
Taking $\limsup_{N\ra\infty}$ of both sides and applying Theorem \ref{theo:hydrodynamic limit conv in prob}, we obtain \eqref{eq:sensitivity on ics for free boundary PDE} in the limit.

We now remove the assumption that $u_0,v_0\in \calP_f(\Rm)$, assuming only that $u_0,v_0\in \calP(\Rm)$. We have that $u_{\epsilon},v_{\epsilon}\in \calP_f(\Rm)$ for all $\epsilon>0$ by \cite{Berestycki2018}, hence 
\begin{equation}\label{eq:Wasserstein estimate start pve times}
\Wah(u_t,v_t)\leq e^{t-\epsilon}\Wah(u_{\epsilon},v_{\epsilon})
\end{equation}
for all $\epsilon>0$. Since $u_{\epsilon}\ra u_0$ and $v_{\epsilon}\ra v_0$ weakly (hence in $\Wah$) as $\epsilon\ra 0$, from the definition of solutions to \eqref{eq:hydrodynamic limit}, we can take the limit of \eqref{eq:Wasserstein estimate start pve times} as $\epsilon\ra 0$ with fixed $t>0$ to obtain \eqref{eq:sensitivity on ics for free boundary PDE}
\end{proof}

\subsection{Conclusion of the proof of Theorem \ref{theo:hydrodynamic limit for random ics}}\label{appendix:hydrodynamic limit random ic}
Having proven Theorem \ref{theo:hydrodynamic limit conv in prob} and Lemma \ref{lem:sensitivity depending on ics}, we are now in a position to prove Theorem \ref{theo:hydrodynamic limit for random ics}. We recall that whereas Theorem \ref{theo:hydrodynamic limit conv in prob} assumes that the initial conditions converge weakly in probability to a deterministic measure, Theorem \ref{theo:hydrodynamic limit for random ics} allows for the initial conditions to converge to a random limit. 

We now take a sequence of $N$-BBMs $(\vec{X}^N_t)_{t\geq 0}$ with empirical measures $m_t^N:=\Theta^N(\vec{X}^N_t)$ for $t\geq 0$, such that $\Law(m^N_0)\in \calP(\calP(\Rm))$ converges to $\Lambda\in \calP(\calP(\Rm))$ as $N\ra\infty$, with the assumption that $\Lambda(\calP_f(\Rm))=1$. We fix $t>0$. Our goal is to show that $\Law((m_0^N,m_t^N))$ converges in $\calP(\calP(\Rm)\times \calP(\Rm))$ to $\Law_{\mu\sim\Lambda}(\mu,\Psi_t(\mu))$. 

We metrise $\calP(\Rm)\times \calP(\Rm) $ using 
\[
d_{\calP(\Rm)\times \calP(\Rm) }((\mu_1,\nu_1),(\mu_2,\nu_2 )):= \Wah(\mu_1,\mu_2)+\Wah(\nu_1,\nu_2).
\]
We further take the Wasserstein-$1$ metric on $\calP(\calP(\Rm) \times \calP(\Rm))$ generated by $d_{\calP(\Rm)\times \calP(\Rm) }\wedge 1$ (see Definition \ref{defin:Wasserstein}), which we denote by $\overline{\Wah}$, and which metrises the topology of weak convergence of measures on $\calP(\calP(\Rm)\times\calP(\Rm))$. 

We define $\calP_N(\Rm)$ to be the space of probability measures on $\Rm$ of the form $\frac{1}{N}\sum_{i=1}^N\delta_{x^i}$.

\commentout{
We firstly establish the following lemma. \JB{We can change $G_t^N$ and make $\Xi$ acts directly in $\mu_N$}
\begin{lem}\label{lem:measurable mapping from N-measures to N-vectors}
There exists, for all $N\geq 2$, a measurable function
\[
\iota_N=(\iota_{N,1},\ldots,\iota_{N,N}):\calP_N(\Rm)\ra \Rm^N
\]
such that $\frac{1}{N}\sum_{i=1}^N\delta_{\iota_{N,i}(\mu)}=\mu$ for all $\mu\in\calP_N(\Rm)$.
\end{lem}
\begin{proof}[Proof of Lemma \ref{lem:measurable mapping from N-measures to N-vectors}]
Inductively define $\iota_{N,1}(\mu):=\inf\{x:x\in \text{spt}(\mu)\}$, $\iota_{N,k}(\mu):=\inf\{x:x\in \text{spt}(\mu-\frac{1}{N}\sum_{r<k}\delta_{\iota_{N,r}(\mu)})\}$ for $2\leq k\leq N$.
\end{proof}}

Using Skorokhod's representation theorem, we define on a common probability space $\Omega^{\text{ic}}$ with probability measure $\Pm^{\text{ic}}$ a sequence of $\calP_N(\Rm)$-valued random measures $\mu_N$ for $N<\infty$, and a $\calP_f(\Rm)$-valued random measure $\mu$, such that $\Law(\mu_N)=\Law(m^N_0)$ for all $N<\infty$, $\Law(\mu)=\Lambda$, and $\mu_N\ra \mu$ $\mathbb{P}^{\text{ic
}}$-almost surely in $\Wah$ as $N\ra\infty$. We further define on a separate filtered probability space $\tilde \Omega$ with probability measure $\tilde{\Pm}$ the necessary Brownian motions and Poisson point processes encoding the movement and branching of the particles of the $N$-BBM, for all $N<\infty$. In words, the probability space $\Omega^{\text{ic}}$ determines our initial condition, whilst the second probability space determines the evolution thereafter. 

Observe that any measure $\mu \in \calP_N(\Rm)$ can be viewed as the initial configuration of the $N$-BBM, in the sense that we can always associate a random vector $\vec{X}_N(\mu)$ in the following way: inductively define $X_1(\mu):=\inf\{x:x\in \text{spt}(\mu)\}$, $X_{k}(\mu):=\inf\{x:x\in \text{spt}(\mu-\frac{1}{N}\sum_{r<k}\delta_{X_{r}(\mu)})\}$ for $2\leq k\leq N$. 

Thus, on $\Omega^{\text{ic}}\times \tilde \Omega$, we have a sequence of $N$-BBMs, whose initial configurations are given by the sequence $(\mu_N)$ in the aforedescribed manner, with the evolution driven by $\tilde \Omega$. These are equal in distribution to the original $N$-BBMs, $\vec X^N_t$, up to relabelling of the indices.



For each $N<\infty$ and $t\geq 0$, we take the measurable function 
\[
G^N_t:\calP_N(\Rm)\times \tilde{\Omega}\ra \calP_N(\Rm)
\]
such that $G^N_t(\mu,\tilde{\omega})$ is the $N$-BBM at time $t$ with initial condition $\mu$, and driving Brownian motions and jumps given by $\tilde{\omega}\in \tilde{\Omega}$. We then define 
\[
\Xi^N_t(\mu,\tilde{\omega}):=(\mu, G^N_t(\mu,\tilde{\omega}))\in \mathcal{P}(\Rm)\times \mathcal{P}(\Rm).
\]
This gives the empirical measures of the initial condition in the first coordinate and of the $N$-BBM at time $t$ in the second coordinate. 

We see that 
\[
\Law(m^N_0,m^N_t)=\Law^{\Pm^{\text{ic}}\times \tilde{\Pm}}(\Xi^N_t(\mu_N),\tilde{\omega}).
\]
We have from Theorem \ref{theo:hydrodynamic limit conv in prob} that for all $\omega^{\text{ic}}\in \Omega^{\text{ic}}$ such that $\mu_N(\omega^{\text{ic}})\ra \mu(\omega^{\text{ic}})\in \calP_f(\Rm)$, we have
\[
\expE^{\tilde{\Pm}} \Big[d_{\calP(\Rm)\times \calP(\Rm) }\big[\Xi^N_t(\mu_N(\omega^{\text{ic}}),\tilde{\omega}),\{\mu(\omega^{\text{ic}}),\Phi_t(\mu(\omega^{\text{ic}})) \}\big]\Big]\ra 0\quad \text{as}\quad N\ra\infty.
\]
It follows from the bounded convergence theorem and Fubini's theorem that
\begin{equation}\label{eq:convergence of Expectation Pic times tilde P}
\expE^{\Pm^{\text{ic}}\times\tilde{\Pm}}
\Big[d_{\calP(\Rm)\times \calP(\Rm) }\big[\Xi^N_t(\mu_N(\omega^{\text{ic}}),\tilde{\omega}),\{\mu(\omega^{\text{ic}}),\Phi_t(\mu(\omega^{\text{ic}})) \}\big] \wedge 1 \Big]
\ra 0\quad \text{as}\quad N\ra\infty.
\end{equation}

For each $N\ge 2,$ we have constructed a coupling of $\Law(m^N_0,m^N_t )$ and $\Law_{\mu\sim \Lambda}(\mu,\Phi_t(\mu) )$ with the cost in the Kantorovich problem for $d_{\mathcal{P}(\Rm)\times \mathcal{P}(\Rm)}\wedge 1$ for these couplings being by definition the left-hand side of \eqref{eq:convergence of Expectation Pic times tilde P}. This, of course, bounds the cost of the optimal coupling (which defines $\overline{\Wah}$). Therefore \eqref{eq:convergence of Expectation Pic times tilde P} implies that 
\[
\overline{\Wah}(\Law(m^N_0,m^N_t ),\Law_{\mu\sim \Lambda}(\mu,\Phi_t(\mu) )) \ra 0
\]
as $N\ra\infty$. This completes the proof of Theorem \ref{theo:hydrodynamic limit for random ics}.
\qed

\section{Stochastic representation for \eqref{eq:hydrodynamic limit}}\label{appendix:stochastic representation for PDE}
We provide here a stochastic representation for solutions of \eqref{eq:hydrodynamic limit}. Whilst this was certainly previously known, the authors are not aware of a proof having been written down in the literature. 

We denote Lebesgue measure on $\Rm$ as $\Leb(\cdot)$. For any $\mu\in \calP(\Rm)$, Brownian motion $(B_t)_{0\leq t<\infty}$ with initial condition $\mu$ will be assumed to be defined on a filtered probability space we denote by $(\Omega,\mathcal{F},(\mathcal{F}_t)_{t\geq 0},\Pm_{\mu})$, with $\Law_{\mu}(B_0)=\mu$. 


\begin{theo}\label{theo:stochastic representation for PDE}
Let $(u,L)$ be a solution to \eqref{eq:hydrodynamic limit} with initial condition $u_0\in \calP(\Rm)$. We then take $B_t$ to be a Brownian motion with initial condition $B_0\sim u_0$, killed instantaneously at the time $\tau:=\inf\{t>0:B_t\leq L_t\}$. We have $\Pm_{u_0}(\tau>t)>0$ for all $t<\infty$, so that the conditional law $\Law_{u_0}(B_t\lvert \tau>t)(\cdot)$ is well-defined. We have that
\begin{equation}\label{eq:PDE solution given by density of BM}
    \Law_{u_0}(B_t\lvert \tau>t)(\cdot)=u_t(\cdot)d \Leb(\cdot)
\end{equation}
for all $t>0$. Moreover we have
\begin{equation}\label{eq:killing time for free boundary exp(1)}
    \Law_{u_0}(\tau)=\text{exp}(1).
\end{equation}
\end{theo}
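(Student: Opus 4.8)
The plan is to identify $u_t$ with a renormalisation of the sub-probability density of the Brownian motion $B$ killed at the moving boundary $L$, and then to read off \eqref{eq:PDE solution given by density of BM} and \eqref{eq:killing time for free boundary exp(1)} from the total mass of that density. For $t>0$ define $v(\cdot,t)$ by $\Pm_{u_0}(B_t\in dx,\,\tau>t)=v(x,t)\,dx$; this density exists and obeys $0\le v(x,t)\le (p_t\ast u_0)(x)$ with $p_t$ the heat kernel, since killing only removes mass, so in particular $v(\cdot,t)$ is bounded for each $t>0$. Because $L$ is real-valued and continuous on compact time intervals (by the regularity of solutions of \eqref{eq:hydrodynamic limit} established in \cite{Berestycki2018}) and $u_0$ puts positive mass far to the right of $\sup_{0\le s\le t}L_s<\infty$, we get $\Pm_{u_0}(\tau>t)>0$ for every $t<\infty$, so the conditional laws in the statement make sense (for $u_0\in\calP(\Rm)\setminus\calP_f(\Rm)$, where $L_0=-\infty$, the same holds since $L_s>-\infty$ for $s>0$).

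\emph{Step 1: $v$ solves a linear problem.} By standard parabolic theory for diffusions killed on the lateral boundary of a space--time domain with continuous boundary curve, $v$ is a classical solution of $\partial_t v=\tfrac12\Delta v$ on $\{x>L_t,\ t>0\}$ with $v(L_t,t)=0$, and $v(\cdot,t)\to u_0$ weakly (as measures) as $t\to0$. Hence $w:=e^{t}v$ is a classical solution of $\partial_t w=\tfrac12\Delta w+w$ on $\{x>L_t,\ t>0\}$ with $w(L_t,t)=0$ and $w(\cdot,t)\to u_0$ weakly as $t\to0$; that is, $w$ solves the same linear initial--boundary value problem, with Dirichlet data on the now-prescribed curve $L$, as does the component $u$ of the given solution $(u,L)$ of \eqref{eq:hydrodynamic limit}.

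\emph{Step 2: uniqueness, hence $u=e^{t}v$.} It remains to show this linear problem has at most one non-negative solution that is locally bounded on $\{t>0\}$ with mass bounded on compact time intervals; this is the technical heart of the argument. One route is to invoke the comparison principle for \eqref{eq:hydrodynamic limit}/\eqref{eq:hydrodynamic limit CDF} from \cite{Berestycki2018}, which applies with $L$ regarded as a fixed boundary. Alternatively one argues directly: setting $d:=u-w$, the moving-boundary term in $\tfrac{d}{dt}\int_{L_t}^{\infty}d^2\,dx$ and the flux term from integration by parts both vanish because $d(L_t,t)=0$, giving $\tfrac{d}{dt}\lVert d(\cdot,t)\rVert_{L^2}^2\le 2\lVert d(\cdot,t)\rVert_{L^2}^2$ on any $[\epsilon,t_0]\subset(0,\infty)$, and Gronwall's inequality then reduces matters to showing $\lVert d(\cdot,\epsilon)\rVert_{L^2}\to0$ as $\epsilon\to0$. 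The delicacy is precisely that $u_0$ may be atomic, so this last step is handled by first proving $u=e^{t}v$ for $u_0\in\calP_f(\Rm)$ with bounded continuous density (where everything is classical up to $t=0$) and then passing to a general $u_0$ by approximation, using Lemma \ref{lem:sensitivity depending on ics} together with the stability of $L$ and of the killed density under weak convergence of the initial datum.

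\emph{Step 3: conclusion.} Given $u_t=e^{t}v_t$, integrating over $(L_t,\infty)$ and using $\int_{L_t}^{\infty}u_t(x)\,dx=1$ yields $\int_{\Rm}v(x,t)\,dx=e^{-t}$, i.e.\ $\Pm_{u_0}(\tau>t)=e^{-t}$ for all $t>0$; this is \eqref{eq:killing time for free boundary exp(1)}, namely $\Law_{u_0}(\tau)=\text{exp}(1)$, and in particular reconfirms $\Pm_{u_0}(\tau>t)>0$. Dividing, the density of $\Law_{u_0}(B_t\mid\tau>t)$ is $v(x,t)/\Pm_{u_0}(\tau>t)=e^{t}v(x,t)=u_t(x)$, which is \eqref{eq:PDE solution given by density of BM}. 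The main obstacle throughout is Step 2: pinning down uniqueness of the linear Dirichlet problem in the time-dependent domain $\{x>L_t\}$ down to $t=0$ when the initial measure is irregular.
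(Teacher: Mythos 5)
Your high-level plan is the same as the paper's: identify $u_t$ with $e^t$ times the density of the Brownian motion killed at the moving boundary $L$, then read off the exponential absorption time and the conditional law by integrating. You also correctly flag the technical heart — uniqueness of the linear Dirichlet problem along the only-continuous curve $L$ with irregular initial data — and you propose the right overall strategy of first treating regular $u_0$ and then approximating. However, there are two genuine gaps, one of which you do not even mention.

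First, and most seriously, your argument that $\Pm_{u_0}(\tau>t)>0$ is wrong. You reason that $u_0$ ``puts positive mass far to the right of $\sup_{0\le s\le t}L_s$,'' but for the model case $u_0=\delta_0$ (which corresponds to the Heaviside initial datum in \eqref{eq:hydrodynamic limit CDF}, arguably the single most important example in this paper) one has $B_0=L_0=0$ almost surely, so there is \emph{no} mass strictly to the right of $L_0$, and hence no mass to the right of $\sup_{0\le s\le t}L_s\geq L_0$. The positivity of $\Pm_{u_0}(\tau>t)$ in this case is not about where $u_0$ sits, but about the boundary moving to the \emph{left} sufficiently fast so that $\tau>0$ a.s.\ despite starting on the boundary. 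The paper makes this precise with a separate ``Lipschitz from the left'' lemma for $L$ (Lemma \ref{lem:Lipschitz from the left free bdy}, proved using the stretching and boundary-comparison lemmas), which is then fed into a Skorokhod/Fatou argument for general atomic $u_0$. Without such a quantitative left-drift estimate for $L$ near $t=0$, the claim $\tau>0$ a.s., and hence the well-posedness of the conditional law, is unjustified.

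Second, your Step 2 is underdeveloped in a way that matters. The comparison principle you cite from \cite{Berestycki2018} is for the \emph{free} boundary problem, not for a linear Dirichlet problem with a prescribed merely-continuous boundary, so it does not directly deliver uniqueness of $w$. The proposed $L^2$ energy identity $\tfrac{d}{dt}\int_{L_t}^\infty d^2\,dx \le 2\int_{L_t}^\infty d^2\,dx$ secretly requires differentiating a moving domain whose boundary curve $L$ is only known to be continuous, and requires regularity of $d$ up to that boundary; both fail in general. There is also a prior gap you gloss over: it is not immediate that the killed-BM density $v$ is continuous up to the lateral boundary with $v(L_t,t)=0$ when $L$ is merely continuous. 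The paper circumvents both difficulties at once by sandwiching with smooth approximating domains $\calO_R$ and proving a bespoke parabolic maximum principle (Lemma \ref{lem:maximum principle}) whose hypotheses are phrased in terms of $\limsup$ at the lateral boundary and weak convergence of $v_t\,d\Leb$ as $t\downarrow 0$ — precisely to avoid assuming the boundary regularity you implicitly rely on. Your plan would need to supply a substitute for this machinery before it is a proof.
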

\begin{rmk}
It may be the case that $B_0=L_0$ with positive probability. For instance if we take $u_0=\delta_0$, then $B_0=L_0=0$ almost surely. It is important, therefore, that in the definition of the stopping time $\tau$ we only consider strictly positive times $t$ such that $B_t\leq L_t$. For the same Dirac initial condition, if $L_t$ were to be Lipschitz at $t=0$, then we would have $\tau=0$ almost surely. We observe, however, that \eqref{eq:killing time for free boundary exp(1)} includes the statement that $L_t$ always moves to the left sufficiently quickly so that $\tau>0$ almost surely.
\end{rmk}
Before proving Theorem \ref{theo:stochastic representation for PDE}, we must firstly establish the following lemma.
\begin{lem}\label{lem:Lipschitz from the left free bdy}
For any $T>0$ there exists $C_{T}>-\infty$ such that $L_{t_2}-L_{t_1}\geq C_{T}(t_2-t_1)$ for all $t_2\geq t_1\geq T$ and for any solution $(u,L)$ of \eqref{eq:hydrodynamic limit}.
\end{lem}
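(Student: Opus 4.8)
The plan is to establish a uniform one-sided Lipschitz bound on the free boundary at positive times by combining a reduction to short time increments, uniform regularity of solutions up to the free boundary for $t\geq T$, and an explicit travelling-wave barrier in the integrated formulation \eqref{eq:hydrodynamic limit CDF}.

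\textbf{Step 1 (reduction).} By time translation, and since the restriction of any solution to $[T,\infty)$ is the solution of \eqref{eq:hydrodynamic limit} started from $u_T\in\calP_f(\Rm)$ (by \cite{Berestycki2018}), it suffices to find $\delta_0=\delta_0(T)>0$ and $K_T<\infty$ such that $L_{t+\delta}-L_t\geq -K_T\delta$ for every solution $(u,L)$, every $t\geq T$ and every $\delta\in(0,\delta_0]$; partitioning $[t_1,t_2]$ into sub-intervals of length at most $\delta_0$ and summing the increments then yields the claim with $C_T:=-K_T$. Fixing such a solution and such a $t$, and translating in space, we may assume $L_t=0$.

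\textbf{Step 2 (uniform estimates at times $\geq T$).} First, via the stochastic representation of Theorem \ref{theo:stochastic representation for PDE}, for $s\geq T$ we have $u_s(x)\,dx=e^{s}\Pm_{u_0}(B_s\in dx,\,\tau>s)$; conditioning on the event $\{\tau>s-\min(T,1)\}$, whose probability is $e^{-(s-\min(T,1))}$, and bounding the Gaussian density over a time of length at least $\min(T,1)$ gives the uniform bound $\|u_s\|_\infty\leq M_T:=e^{\min(T,1)}/\sqrt{2\pi\min(T,1)}$, valid for all solutions. Next, using the basic properties of solutions from \cite{Berestycki2018} and Remark \ref{rmk:U strictly decreasing at pve times}, each $u_s$ with $s>0$ is $C^1$ up to the free boundary with the normalised flux $\partial_xu(L_s,s)=2$ (obtained by differentiating the mass constraint $\int_{L_s}^\infty u_s=1$, using $u_s(L_s)=0$). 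Combining this with the $L^\infty$ bound and parabolic interior/boundary estimates for $\partial_tu=\tfrac12\Delta u+u$ on the fixed window $[T/2,\infty)$, one upgrades this to uniform control of higher derivatives up to the free boundary: there exist $B_T<\infty$ and $\epsilon_T>0$ with $|\partial_x^2u(L_s,s)|+|\partial_x^3u(L_s,s)|\leq B_T$ and $|u_s(L_s+y)-2y|\leq B_Ty^2$ for $0\leq y\leq \epsilon_T$, uniformly over all solutions and all $s\geq T$. I expect this uniform up-to-the-free-boundary regularity to be the main obstacle; if a direct argument is awkward, one can instead pass to the equivalent problem \eqref{eq:hydrodynamic limit CDF}, writing $w:=1-U\geq 0$, which solves the one-phase Stefan/obstacle problem $\partial_tw=\tfrac12 w_{xx}+w-1$ on $\{w>0\}=\{x>L_t\}$ with $w=\partial_xw=0$ on $\{x\leq L_t\}$ and uniformly non-degenerate behaviour $\partial_x^2w(L_t,t)=2$, for which such estimates are classical, or work with the $U-U^n$ approximations \eqref{eq:n-FKPP} and pass to the limit as in the proof of Lemma \ref{lem:extended maximum principle}.

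\textbf{Step 3 (travelling barrier).} Work with $w:=1-U$, which vanishes to second order on $\{x\leq L_t\}$ (with $\partial_x^2w(L_t,t)=\partial_xu(L_t,t)=2$). For $v>0$ let $g=g_v$ solve the ODE $g''=2vg'-2g+2$ on $[0,\infty)$ with $g(0)=g'(0)=0$, extended by $0$ on $(-\infty,0]$; then $g''(0)=2$, $g'''(0)=4v$, $g$ is smooth, nonnegative, increasing, and $g(y)\to\infty$ as $y\to\infty$, so $g_v(y)=y^2+\tfrac{2v}{3}y^3+O(y^4)$ near $0$. The function $W(x,s):=g_v\!\bigl(x+v(s-t)\bigr)$ is an exact solution of the $w$-obstacle problem, with free boundary $L^W_s=-v(s-t)$. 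Using Step 2, choose $v=v_T$ large enough (of order $B_T$) that $g_{v_T}\geq w(\cdot,t)$ on $[0,\infty)$: for $y\leq\epsilon_T$ this compares the Taylor expansions $g_{v_T}(y)=y^2+\tfrac{2v_T}{3}y^3+O(y^4)$ and $w(y,t)=y^2+\tfrac{1}{6}\partial_x^2u(L_t,t)\,y^3+O(y^4)$ with $|\partial_x^2u(L_t,t)|\le B_T$, and for $y\geq\epsilon_T$ it follows from $g_{v_T}(y)\geq g_{v_T}(\epsilon_T)\geq 1\geq w(y,t)$ once $v_T$ is large. By the comparison principle for \eqref{eq:hydrodynamic limit CDF} (equivalently for the $w$-obstacle problem; \cite[Theorem 1.2]{Berestycki2018}, after truncating $g_{v_T}$ at level $1$ so that $1-g_{v_T}\in\calD$, or via the $U-U^n$ approximation), $w(\cdot,s)\leq W(\cdot,s)$ for all $s\in[t,t+\delta_0]$. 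Since $W(x,s)=0$ for $x\leq -v_T(s-t)$ and $w\geq 0$, this forces $w(x,s)=0$, i.e. $U(x,s)=1$, for all $x\leq -v_T(s-t)$, so $L_s\geq -v_T(s-t)$. As $L_t=0$, we conclude $L_{t+\delta}-L_t\geq -v_T\delta$ for $\delta\in(0,\delta_0]$, so $K_T:=v_T$ works in Step 1, completing the proof.
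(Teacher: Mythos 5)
Your approach is genuinely different from the paper's, and unfortunately it has a real gap that you yourself flag.

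The paper proves the one-sided Lipschitz bound with almost no analytic input. After passing to the integrated problem \eqref{eq:hydrodynamic limit CDF}, it applies the stretching lemma (Lemma \ref{lem:extended maximum principle}) to the Heaviside solution $V_t$: since $V_h\geq_s V_0$ for every $h>0$, one gets $V_{t+h}\geq_s V_t$, and then the boundary comparison lemma (Lemma \ref{lem:boundary comparison lemma}) implies that $t\mapsto L^V_{t+h}-L^V_t$ is non-decreasing for each fixed $h$. Combining this monotonicity with the continuity of $t\mapsto L^V_t$ from \cite{Berestycki2018} gives a soft convexity argument: the function $f(t)=L^V_t-L^V_0-(L^V_T-L^V_0)\tfrac{t}{T}$ vanishes at $0$ and $T$ and is nonpositive at $T/2$, so it has an interior minimiser $T'$, which forces the linear lower bound $L^V_s-L^V_t\geq C_T(s-t)$ for $T\leq t\leq s$ with $C_T=(L^V_T-L^V_0)/T$. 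Finally, $U_0\geq V_0$ for every admissible initial condition, and Lemmas \ref{lem:extended maximum principle} and \ref{lem:boundary comparison lemma} transfer the bound to arbitrary solutions. No free-boundary regularity beyond continuity is needed.

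Your Step 1 reduction is fine, and the $L^\infty$ bound in Step 2 via Theorem \ref{theo:stochastic representation for PDE} and the exponential law of $\tau$ is correct. However, the remainder of Step 2 is where the proof breaks down. You need, uniformly over \emph{all} solutions and all $s\geq T$, the flux normalisation $\partial_x u(L_s,s)=2$ together with two-sided control on $\partial_x^2u$ and a Taylor remainder bound $\lvert u_s(L_s+y)-2y\rvert\leq B_Ty^2$ up to the free boundary. Such estimates are regularity results for a one-phase free boundary problem: deriving $\partial_x u(L_s,s)=2$ by differentiating the mass constraint already presupposes the regularity you are trying to establish; and "parabolic interior/boundary estimates" do not apply up to a merely continuous free boundary, so the argument is circular unless one first proves the boundary is Lipschitz, which is exactly the content of the lemma. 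You explicitly acknowledge this as "the main obstacle" and gesture at three alternatives — direct estimates, the obstacle-problem reformulation, or the $U-U^n$ approximation — but none is carried out. Passing to the Stefan/obstacle formulation does not immediately help, because classical obstacle regularity theory typically requires some a priori nondegeneracy or Lipschitz control of the free boundary to begin with; and uniform $C^{2,\alpha}$ bounds up to $a^1(U_t^{(n)})$ that survive $n\to\infty$ (where the reaction term degenerates to a free boundary) are not established in \cite{Berestycki2018}. Step 3's travelling barrier $g_v$ is a nice idea and the ODE computation is correct, but it is inoperative without the uniform regularity input from Step 2 (and, secondarily, one would also need to justify comparison against the \emph{truncated} barrier $1-\min(g_v,1)$, which is not an exact solution of \eqref{eq:hydrodynamic limit CDF}).

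In short: the student's plan could in principle work, but it requires a substantial and unproven regularity theory for the free boundary, whereas the paper's argument via the stretching lemma sidesteps all of this and should be preferred.
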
 
\begin{rmk}\label{rmk:remark on two types of hitting time equality due to Lipschitz}
It follows from Lemma \ref{lem:Lipschitz from the left free bdy} that $\inf\{t>0:B_t\leq L_t\}=\inf\{t>0:B_t<L_t\}$, where $(B_t)_{0\leq t<\infty}$ is a Brownian motion with initial condition $u_0$ and $(u,L)$ is a solution of \eqref{eq:hydrodynamic limit} with initial condition $u_0$, for any $u_0\in \calP(\Rm)$.
\end{rmk}
\begin{proof}[Proof of Lemma \ref{lem:Lipschitz from the left free bdy}]
By the correspondence between solutions $(u,L)$ of \eqref{eq:hydrodynamic limit} and solutions $(U,L)$ of \eqref{eq:hydrodynamic limit CDF}, it suffices to establish the statement of Lemma \ref{lem:Lipschitz from the left free bdy} with solutions of \eqref{eq:hydrodynamic limit} replaced by solutions of \eqref{eq:hydrodynamic limit CDF}.

We will make use of Lemmas \ref{lem:extended maximum principle} and \ref{lem:boundary comparison lemma}, the proofs of which do not make use of any results from this Appendix. We write $U\geq_s V$ to mean that $U$ is more stretched than $V$, as defined in \eqref{eq:defin 1 of being more stretched}.

We fix $T>0$ and let $(V,L^V)$ be the solution of \eqref{eq:hydrodynamic limit CDF} with Heaviside initial condition $V_0(\cdot)=\Ind(\cdot<0)$. Then $V_t\geq_s V_0$ for any $t>0$, hence $V_t\geq_s V_s$ for all $0\leq s\leq t$, by Lemma \ref{lem:extended maximum principle}. Lemma \ref{lem:boundary comparison lemma} therefore implies that 
\begin{equation}\label{eq:monotonicity of boundary}
    \Rm_{\geq 0}\ni t\mapsto L^V_{t+h}-L^V_t\in \Rm\quad\text{is a non-decreasing function for any $h>0$.}
\end{equation} 

We define 
\[
f:[0,T]\ni t\mapsto L^V_t-L^V_0-(L^V_T-L^V_0)\frac{t}{T}\in \Rm.
\]
We observe that $f$ is a continuous function, $f(0)=f(T)=0$ and $f(\frac{T}{2})\leq 0$. The first of these follows since \cite[Theorem 1.1 and Proposition 1.3]{Berestycki2018} implies that $[0,T]\ni t\mapsto L^V_t\in \Rm$ is continuous, the second is obvious, and the third follows since since $L^V_T-L^V_{\frac{T}{2}}\geq L^V_{\frac{T}{2}}-L^V_0$, by \eqref{eq:monotonicity of boundary}.

We may therefore fix $T'\in (0,T)$ such that $f(T')\leq f(t)$ for all $t\in [0,T]$. Then 
\[
L^V_t-L^V_{T'}=f(t)-f(T')+(L^V_T-L^V_0)\frac{t-T'}{T}\geq \frac{L^V_T-L^V_0}{T}(t-T')
\]
for all $t\in [T',T]$. It then follows from \eqref{eq:monotonicity of boundary} that $L^V_s-L^V_t\geq C_T(s-t)$ for all $T\leq t\leq s<\infty$, where 
\[
C_T:=\frac{L^V_T-L^V_0}{T}>-\infty
\]

Finally, since $U_0\geq V_0$ for any initial condition $U_0$, Lemmas \ref{lem:extended maximum principle} and \ref{lem:boundary comparison lemma} imply that $L_s-L_t\geq L^V_s-L^V_t$ for any solution $(U,L)$ of \eqref{eq:hydrodynamic limit CDF}, whence Lemma \ref{lem:Lipschitz from the left free bdy} follows.
\end{proof}

\begin{proof}[Proof of Theorem \eqref{theo:stochastic representation for PDE}]
In the following, a classical solution, $v$, of the heat equation on an open set is defined to be a $C^{2,1}$ function (which is necessarily $C^{\infty,\infty}$ on the same open set)  satisfying $\partial_tv=\frac{1}{2}\Delta v$ pointwise. Moreover $(\Omega,\mathcal{F},(\mathcal{F}_t)_{t\geq 0},(B_t)_{t\geq 0},(\Pm_x)_{x\in \Rm})$ is $1$-dimensional Brownian motion with $\Pm_x(B_0=x)=1$ for all $x\in \Rm$. We define $\Pm_{\mu}:=\int_{\Rm}\Pm_x(\cdot)\mu(dx)$, so that $(B_t)_{t\geq 0}$ is Brownian motion with initial condition $B_0\sim \mu$ under $\Pm_{\mu}$. We write $\expE_x$ and $\expE_{\mu}$ for expectation with respect to $\Pm_x$ and $\Pm_{\mu}$ respectively. 

 We firstly prove the following elementary lemma, a version of the parabolic maximum principle. 
\begin{lem}\label{lem:maximum principle}
Suppose that $v(x,t)$ is a classical solution of the heat equation $\partial_tv=\frac{1}{2}\Delta v$ on the open set $\calO \subseteq\Rm\times (0,T)$, for some time horizon $T<\infty$. We write $v_t(x):=v(x,t)$. We assume that (1) $v$ is bounded on  $(\Rm\times (\epsilon,T))\cap \calO$, for all $\epsilon>0$. We define $\Sigma_0:=\partial \calO\cap (\Rm\times \{0\})$ and $\Sigma_L:=\partial \calO\cap (\Rm\times (0,T))$. We further assume that (2) $\limsup_{(x',t')\ra (x,t)}v(x',t')\leq 0$ for all $(x,t)\in \Sigma_L$, and (3) $v_t(\cdot)d\Leb(\cdot)\ra 0$ in the topology of weak convergence of measures as $t\downarrow 0$. Then $v(x,t)\leq 0$ on $\calO$.
\end{lem}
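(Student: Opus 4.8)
The plan is a probabilistic proof of Lemma~\ref{lem:maximum principle}, based on the fact that, for the \emph{forward} heat equation, running a Brownian motion \emph{backwards} in time along the solution produces a (local) martingale.

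First fix an arbitrary $(x_0,t_0)\in\calO$; it suffices to prove $v(x_0,t_0)\le 0$. Let $(B_r)_{r\ge 0}$ be a Brownian motion with $B_0=x_0$ and set $\sigma:=\inf\{r>0:(B_r,t_0-r)\notin\calO\}$. Since $v$ solves $\partial_tv=\tfrac12\Delta v$ classically on $\calO$, It\^o's formula shows $r\mapsto v(B_r,t_0-r)$ is a continuous local martingale on $[0,\sigma)$. Now fix $\epsilon\in(0,t_0)$: on $[0,\sigma\wedge(t_0-\epsilon)]$ the point $(B_r,t_0-r)$ lies in $(\Rm\times[\epsilon,t_0])\cap\calO\subseteq(\Rm\times(\epsilon/2,T))\cap\calO$ (using $t_0<T$), on which $v$ is bounded by assumption (1); hence the stopped process is a bounded — and so uniformly integrable — martingale, provided we define its value at $r=\sigma$ to be $\lim_{r\uparrow\sigma}v(B_r,t_0-r)$, a limit that exists almost surely because a bounded martingale converges. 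Optional stopping then gives
\[
v(x_0,t_0)=\expE\!\left[\lim_{r\uparrow\sigma}v(B_r,t_0-r)\,\Ind_{\{\sigma\le t_0-\epsilon\}}\right]+\expE\!\left[v(B_{t_0-\epsilon},\epsilon)\,\Ind_{\{\sigma>t_0-\epsilon\}}\right].
\]
On $\{\sigma\le t_0-\epsilon\}$ one has $t_0-\sigma\ge\epsilon>0$, so $(B_\sigma,t_0-\sigma)\in\partial\calO\cap(\Rm\times(0,T))=\Sigma_L$, whence assumption (2) forces $\lim_{r\uparrow\sigma}v(B_r,t_0-r)\le\limsup_{(x',t')\to(B_\sigma,t_0-\sigma)}v(x',t')\le 0$. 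Therefore the first term is $\le 0$ and
\[
v(x_0,t_0)\le\expE\!\left[v(B_{t_0-\epsilon},\epsilon)\,\Ind_{\{\sigma>t_0-\epsilon\}}\right]\qquad\text{for every }\epsilon\in(0,t_0).
\]

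The remaining step — letting $\epsilon\downarrow 0$ — is the main obstacle, and is where assumptions (1) and (3) have to be played off against each other. Writing $\nu_\epsilon(dy):=\Pm_{x_0}(B_{t_0-\epsilon}\in dy,\ \sigma>t_0-\epsilon)$, a sub-probability measure whose Lebesgue density is dominated by the Gaussian $p_{t_0-\epsilon}(x_0,\cdot)$ — hence bounded by $(\pi t_0)^{-1/2}$ for $\epsilon<t_0/2$, with $\nu_\epsilon(\{|y-x_0|>R\})\le\Pm(|B_{t_0}|>R)$ uniformly in $\epsilon$ — the inequality above reads $v(x_0,t_0)\le\int_\Rm v(y,\epsilon)\,\nu_\epsilon(dy)$, and the goal is $\limsup_{\epsilon\downarrow 0}\int_\Rm v(y,\epsilon)\,\nu_\epsilon(dy)\le 0$. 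I would split the integral at $\{|y-x_0|\le R\}$: the tail is made uniformly (in $\epsilon$) small from the tail estimate on $\nu_\epsilon$ together with the boundedness of $v$ — this is where one uses that $v$ is bounded globally, which holds in the intended applications (there $v$ is a difference of two quantities valued in $[0,1]$); while on the compact slice one combines the uniform density bound on $\nu_\epsilon$, the $L^1_{\mathrm{loc}}$ convergence $\nu_\epsilon\to\nu_0$ of the killed-Brownian densities (immediate from the Gaussian domination and dominated convergence), and the weak convergence of $v(\cdot,\epsilon)\,d\Leb$ to $0$ from assumption (3) (after a routine $C_c$-approximation of the limiting density), to see that the bulk contribution tends to $0$. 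Since $R$ and $\epsilon$ are arbitrary this yields $v(x_0,t_0)\le 0$, and $(x_0,t_0)\in\calO$ was arbitrary. The genuinely delicate point is precisely this last limit: the constant in assumption (1) is allowed to degenerate as $t\downarrow 0$, so one cannot pass to the limit naively, and it is the heat kernel's smoothing that lets the merely weak convergence in (3) suffice.
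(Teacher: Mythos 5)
Your derivation up to
\[
v(x_0,t_0)\le\expE\!\left[v(B_{t_0-\epsilon},\epsilon)\,\Ind_{\{\sigma>t_0-\epsilon\}}\right]
\]
is correct and essentially matches the paper's (the paper uses a stopping time $\tau_\delta$ stopping a distance $\delta$ from $\calO^c$, then lets $\delta\downarrow 0$ with Fatou, where you use martingale convergence to define the boundary value; both are fine). The genuine gap is exactly where you flag it yourself: your passage to the limit $\epsilon\downarrow 0$ requires a uniform bound on $v(\cdot,\epsilon)$ down to $\epsilon=0$ (you invoke ``$v$ is bounded globally''), but the lemma only assumes (1) boundedness on $(\Rm\times(\epsilon,T))\cap\calO$ for each $\epsilon>0$, so the bound is allowed to blow up as $t\downarrow 0$. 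You cannot import the extra hypothesis ``from the intended applications'' into the proof of the lemma as stated, and the ``bulk'' argument you sketch (pairing the $\epsilon$-dependent sub-probability $\nu_\epsilon$ against $v(\cdot,\epsilon)$) does not close without it, since assumption (3) gives weak convergence only against a fixed bounded continuous test function, not against the moving kernel $\Ind\{\sigma>t_0-\epsilon\}$-killed density.

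The paper circumvents this entirely with a simple reduction you did not make: first prove the lemma under the additional hypothesis $v\ge 0$, and then obtain the general case by replacing $\calO$ with $\calO':=\{(x,t)\in\calO:v(x,t)>0\}$ (which is open, inherits (1)--(3), and on which $v\ge 0$; concluding $v\le 0$ on $\calO'$ forces $\calO'=\emptyset$). Once $v\ge 0$, the indicator $\Ind_{\{\sigma>t_0-\epsilon\}}$ can be discarded to produce an inequality in the \emph{correct} direction,
\[
v(x_0,t_0)\le\expE\!\left[v(B_{t_0-\epsilon},\epsilon)\right]=\int_{\Rm}k_{t_0-\epsilon}(x_0,y)\,v(y,\epsilon)\,dy,
\]
and the right-hand side tends to $0$ directly by assumption (3), since $y\mapsto k_{t_0-\epsilon}(x_0,y)$ is a fixed (in the limit) bounded continuous function. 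This is what lets the merely-weak convergence in (3) suffice with no uniform control on $v$ near $t=0$, precisely the point you correctly identified as the obstruction but did not resolve.
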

\begin{proof}[Proof of Lemma \ref{lem:maximum principle}]
We assume for the time being that $v$ is non-negative.

In the following, Brownian motion will have initial condition $B_0=0$ almost surely. We fix $(x,t)\in \calO$ and take the stopping time $\tau_0:=\inf\{s>0:(x-B_s,t-s)\notin \calO\}\leq t$. We further define the stopping times $\tau_{\delta}:=\inf\{s>0:d((x-B_s,t-s),\calO^c)\leq \delta\}$ for $\delta>0$, observing that $\tau_{\delta}\uparrow \tau_0$ almost surely as $\delta\downarrow 0$.

 We fix $\epsilon>0$.  It follows from Ito's lemma, from the boundedness of $v$ and from the optional stopping theorem that 
\[
v(x,t)=\expE_0[v_{\epsilon}(B_{t-\epsilon})\Ind(\tau_{\delta}>t-\epsilon)]+\expE_0[v_{\tau_{\delta}}(B_{t-\tau_{\delta}})\Ind(\tau_{\delta}\leq t-\epsilon)].
\]
By Assumption (1), we can apply Fatou's lemma to see that
\[
\limsup_{\delta\ra 0}\expE_0[v_{\tau_{\delta}}(B_{t-\tau_{\delta}})\Ind(\tau_{\delta}\leq t-\epsilon)]\leq \expE_0[\limsup_{\delta \ra 0}v_{\tau_{\delta}}(B_{t-\tau_{\delta}})\Ind(\tau_{\delta}\leq t-\epsilon)]\leq 0,
\]
the last inequality following from Assumption (2). Again using Fatou's lemma, it follows that
\begin{align*}
&v(x,t)\leq \limsup_{\delta\ra 0}\expE_0[v_{\epsilon}(B_{t-\epsilon})\Ind(\tau_{\delta}>t-\epsilon)]\\
&\leq \expE_0[\limsup_{\delta\ra 0}v_{\epsilon}(B_{t-\epsilon})\Ind(\tau_{\delta}>t-\epsilon)]\leq \expE_0[v_{\epsilon}(B_{t-\epsilon})\Ind(\tau_0>t-\epsilon)].
\end{align*}
We now write $k_t(x,y)$ for the Gaussian kernel. We see that
\[
v(x,t+\epsilon)\leq \int_{\Rm}k_{t}(x,y)v_{\epsilon}(y)dy\ra 0
\]
as $\epsilon\ra 0$ by Assumption (3) (using that $y\mapsto k_t(x,y)$ is continuous and bounded). Thus $v(x,t)\leq 0$ by the continuity of $v$. This concludes the proof of Lemma \ref{lem:maximum principle} under the additional assumption that $v$ is non-negative.

We now remove the assumption that $v$ is non-negative. We assume for contradiction that $\calO':=\{(x,t)\in \calO:v(x,t)>0\}$ is non-empty (otherwise we are done). Then since $v$ is continuous, $\calO'$ is an open subset of $\calO$ (and hence of $(0,T)\times \Rm$), and moreover Assumptions (1)-(3) remain true if we replace $\calO$ with $\calO'$. Having proven \ref{lem:maximum principle} when $v$ is non-negative, it follows that $v\leq 0$ on $\calO'$, which is a contradiction. Therefore $\calO'=\emptyset$ and we are done.
\end{proof}

We now let $(u,L)$ be a solution to \eqref{eq:hydrodynamic limit} with initial condition $u_0\in \calP(\Rm)$. We firstly prove Theorem \ref{theo:stochastic representation for PDE} with the additional assumption that $u_0$ is atomless, so impose this additional assumption for the time being.

We have from \cite[Theorem 1.1, Proposition 1.3 and Corollary 2.1]{Berestycki2018} that $ t\mapsto L_t\in \Rm$ is continuous on $(0,\infty)$, and that $L_t\ra L_0:=\inf\{x \in \Rm:x\in \text{spt}(u_0)\}\in \Rm\cup\{-\infty\}$ as $t\downarrow 0$. Since $u_0$ is atomless, $u_0((L_0,\infty))=1$. It therefore follows immediately from the parabolic Harnack inequality that $\Pm_{u_0}(\tau>t)>0$ for all $t<\infty$. Moreover continuity of $L_t$ and of the sample paths of Brownian motion imply that $\tau>0$ almost surely.


We now define for each $R\in \Nm$ a curve $L_{R}\in C^{\infty}(\Rm_{>0};\Rm)\cap C(\Rm_{\geq 0};\Rm\cup\{-\infty\})$ such that $L_{R}(t)\in (L(t),L(t)+\frac{1}{R})$ for $t>0$. We thereby define the time-dependent domains 
\[
\calO_R:=\{(x,t)\in \Rm\times \Rm_{>0}:L_{R}(t)<x\},\; R\in \Nm,\quad \calO:=\{(x,t)\in \Rm\times \Rm_{>0}:L(t)<x\}.
\]
\begin{center}
  \hspace{0.5cm}  \includegraphics[width=0.7\textwidth]{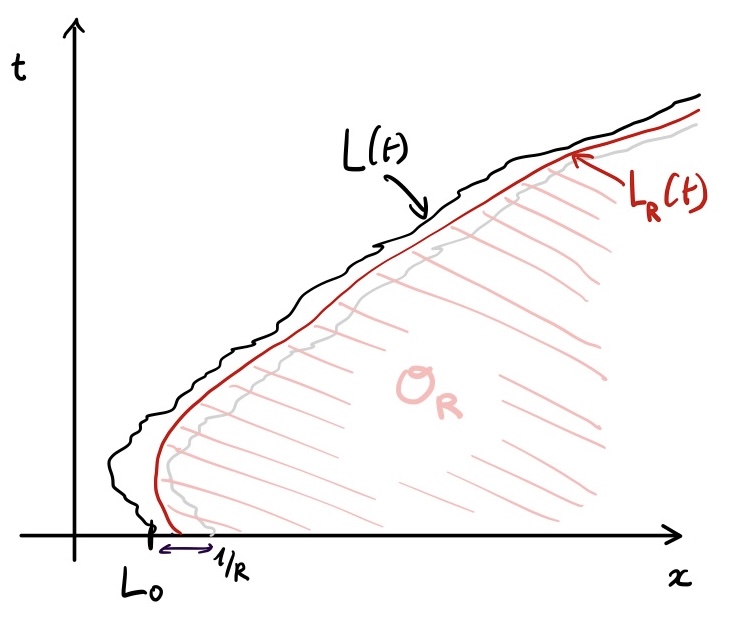}   
\end{center}

We write $v(x,t):=e^{-t}u(x,t)$. We see that $v$ is a classical solution of the heat equation on $\calO$. We also have from \cite[Theorem 1.1 and Corollary 2.1]{Berestycki2018} that 
\begin{equation}\label{eq:v satisfies Dirichlet bdy conditions}
v\in C(\Rm\times (0,T))\quad\text{with} \quad v\equiv  0\quad\text{on}\quad \{(L(t),t):t\in (0,T)\},
\end{equation}
i.e. $v$ satisfies Dirichlet boundary conditions along $L$.

We consider Brownian motion started from initial condition $u_0$, killed either at the times $\tau_R:=\inf\{t>0:(B_t,t)\notin \calO_R\}$ or at the time $\tau:=\inf\{t>0:(B_t,t)\notin \calO\}$. Then $\Pm_{u_0}(B_t\in \cdot,\tau>t)$ and $\Pm_{u_0}(B_t\in \cdot,\tau_R>t)$ have densities with respect to Lebesgue measure on $\calO$ and $\calO_R$ respectively, 
versions of which are $C^{\infty}$ classical solutions of the heat equation on $\calO$ and $\calO_R$ respectively. This is well-known to hold under the much more general parabolic H\"ormander conditions, by the argument given at the beginning of the proof of \cite[Theorem 3]{Ichihara1974} and H\"ormander's theorem \cite{Hormander1967}). This is a fact about diffusions on the interior of open sets, which doesn't require any information about boundary regularity. We write $\tilde{v}(x,t)$ and $\tilde{v}^R(x,t)$ respectively for these densities,  with $\tilde{v}_t(x):=\tilde v(x,t)$ and $\tilde{v}^R_t(x):=\tilde{v}^R(x,t)$. 

Our goal is to show the stochastic representation
\[
v\equiv \tilde{v}\quad\text{on}\quad \calO.
\]
We would like to appeal to suitable PDE uniqueness theory, but must be careful because: the boundary is only known to be continuous, it's not clear in which sense $\tilde{v}$ vanishes along $L(t)$, the domain is unbounded, and $v_t$ is only known to converge to $v_0$ weakly in measure as $t\downarrow 0$. The first three problems are not present on the domain $\calO_R$ for $R<\infty$ (but the final one is),  hence why we have introduced these domains. The final problem is dealt with by applying the form of the maximum principle we have established in Lemma \ref{lem:maximum principle}.


We claim that both $\tilde{v}^R-v$ and $v-\tilde{v}$ satisfy the assumptions of Lemma \ref{lem:maximum principle}, on the domains $\calO_R$ and $\calO$ respectively. It is immediate that they both satisfy Assumption (1).

Using the time-reversibility of Brownian motion, we have that
\[
\begin{split}
&\tilde{v}^R(x,t)dx=\int_{\Rm}\tilde{v}^R(y,\epsilon)\Pm_{y}[B_{t-\epsilon}\in dx,(B_s,\epsilon+s)\in \calO_R\text{ for all }0\leq s\leq t-\epsilon]dy\\
&=dx\int_{\Rm}\tilde{v}^R(y,\epsilon)\Pm_{x}[B_{t-\epsilon}\in dy,(B_s,t-s)\in \calO_R\text{ for all }0\leq s\leq t-\epsilon].
\end{split}
\]
Since $x\mapsto \tilde{v}^R(x,t)$ is continuous,
\[
\tilde{v}^R(x,t)=\expE_x[\tilde{v}^R(B_{t-\epsilon},\epsilon)\Ind((B_s,t-s)\in \calO_R\text{ for all }0\leq s\leq t-\epsilon)].
\]
Since $L^R\in C^{\infty}(\Rm_{>0};\Rm)$, it follows that $\tilde{v}^R$ vanishes continuously along $\partial \calO_R\cap (\Rm\times (0,T))$, which implies that $\tilde{v}^R-v$ satisfies Assumption (2). It similarly follows from \eqref{eq:v satisfies Dirichlet bdy conditions} that $v-\tilde{v}$ satisfies Assumption (2).

We have from \cite[Theorem 1.1 and Corollary 2.1]{Berestycki2018} that 
\[
v_t(\cdot)d\Leb(\cdot)\ra u_0(\cdot)
\]
weakly as $t\ra\infty$. We define 
\[
v^R_t(x):=v_t(x)\Ind((x,t)\in \calO_R)\quad\text{ and}\quad u^R_0(dx):=\Ind[x\in (L_R(0),\infty)]u_0(dx). 
\]
Since $L^R\in C^{\infty}(\Rm_{>0};\Rm)\cap C(\Rm_{\geq 0};\Rm\cup\{-\infty\})$ and $u_0$ is atomless, 
\[
v^R_t(\cdot)d\Leb(\cdot)\ra u^R_0(\cdot)d\Leb(\cdot)
\]
weakly as $t\ra \infty$. Moreover since $u_0$ is atomless and Brownian motion has continuous sample paths, we that 
\[
\tilde{v}^R_t(\cdot)d\Leb(\cdot)\ra u^R_0(\cdot)\quad\text{and}\quad\tilde{v}_t(\cdot)d\Leb \ra u_0(\cdot)
\]
weakly as $t\ra 0$. Putting these together, we see that 
\[
[\tilde{v}^R_t(\cdot)-v^R_t(\cdot)]d\Leb(\cdot)\ra 0\quad\text{and}\quad [v_t(\cdot)-\tilde{v}_t(\cdot)]d\Leb(\cdot)\ra 0
\]
weakly as $t\ra 0$. Therefore $\tilde{v}^R-v$ and $v-\tilde{v}$ satisfy Assumption (3) on the domains $\calO_R$ and $\calO$ respectively.

We can therefore apply Lemma \ref{lem:maximum principle} to both $\tilde{v}^R-v$ and $v-\tilde{v}$ to see that
\[
(\tilde{v}^R-v)(x,t)\leq 0\quad\text{and}\quad (v-\tilde{v})(x,t)\leq 0
\]
for all $(x,t)\in \calO_R$ and for all $(x,t)\in \calO$ respectively. 

We note that for all $(x,t)\in \calO$ we have that $(x,t)\in \calO_R$ for all $R$ sufficiently large. Using the monotone convergence theorem and the fact $u_0$ is atomless we see that 
\[
\int_{A}\tilde{v}^R_t(x)\Leb(dx)=\Pm_{u_0}(B_t\in A,\tau_R>t)\uparrow \Pm_{u_0}(B_t\in A,\tau>t)=\int_{A}\tilde{v}_t(x)\Leb(dx)
\]
as $R\ra\infty$, for all Borel-measurable $A\subseteq \Rm$ and $t>0$. Therefore, $\tilde{v}^R(x,t)\rightarrow \tilde{v}(x,t)$ for almost every $(x,t)\in \calO$ as $R\ra\infty$. Thus $\tilde{v}\leq v$ almost everywhere on $\calO$. Since both $\tilde{v}$ and $v$ are continuous on $\calO$, $\tilde{v}\leq v$ everywhere. Combined with the fact that $v\leq \tilde{v}$ everywhere, we see that $v\equiv \tilde{v}$ on $\calO$.

We have established that $u_t(\cdot)d\Leb(\cdot)=e^t\Pm_{u_0}(B_t\in \cdot,\tau>t)$ for all $t>0$. Integrating both sides, we see that $\Pm_{u_0}(\tau>t)=e^{-t}$ for all $t>0$, from which we conclude \eqref{eq:PDE solution given by density of BM} and \eqref{eq:killing time for free boundary exp(1)}, under the additional assumption that $u_0$ is atomless.

We now remove the assumption that $u_0$ is atomless. The key additional difficulty is to show that $\tau>0$ almost surely. For $\mu_1,\mu_2\in\calP(\Rm)$, we write $\mu_1\leq \mu_2$ if $\mu_2$ stochastically dominates $\mu_1$. We fix $\phi_{n}\in C_c^{\infty}((2^{-n-1},2^{-n});\Rm_{\geq 0})$ with $\int_{\Rm}\phi_{n}(x)dx=1$ for all $n<\infty$. We then define
\[
u^{n}_0:=\phi_{n}\ast u_0
\]
for $n<\infty$. We observe that 
\begin{equation}\label{eq:stochastic ordering of initial conditions mollification final appendix}
u_0 \leq  u^{n+1}_0\leq u^n_0
\end{equation}
for $n<\infty$. Moreover, we have that $u^{n}_0$ has an absolutely continuous density for all $n<\infty$, and that $u^{n}_0\ra u_0$ weakly as $n\ra \infty$.

We recall that given $\mu\in\calP(\Rm)$ we write $\Psi_t(\mu)$ for the solution at time $t$ of \eqref{eq:hydrodynamic limit} with $u_0 = \mu$. 

We let $(u^n,L^n)$ be the solution to \eqref{eq:hydrodynamic limit} with initial condition $u^n_0$. By \eqref{eq:stochastic ordering of initial conditions mollification final appendix} and the comparison principle \cite[Theorem 1.2]{Berestycki2018}, $L_t\leq \liminf_{n\ra\infty}L^n_t$ for all $t>0$, and moreover $L^n_t$ is pointwise non-increasing in $n$. On the other hand, Lemma \ref{lem:sensitivity depending on ics} and the upper semicontinuity of the map $L$ defined in \eqref{eq:lower bdy and median maps} (see \eqref{eq:A and L upper semicontinuous}) imply that 
\[
\limsup_{n\ra\infty}L^n_t=\limsup_{n\ra\infty}L(\Psi_t(u^n_0))\leq L(\Psi_t(u_0))=L_t.
\]
It therefore follows that $L^n_t\ra L_t$ pointwise as $n\ra\infty$. Since $L_t$ is continuous and $L^n_t$ is pointwise non-increasing in $n$, Dini's theorem implies that $L^n_t\ra L_t$ as $n\ra\infty$ uniformly on compact subsets of $\Rm_{>0}$.

Using Skorokhod's representation theorem, we take on $(\tilde{\Omega},\tilde{\mathcal{F}},(\tilde{\mathcal{F}}_t)_{t\geq 0},\tilde{\Pm})$ a family of $\tilde{\mathcal{F}}_0$-measurable random variables $(\tilde{X}_n)_{n=1}^{\infty}$, another $\tilde{\mathcal{F}}_0$-measurable random variable $\tilde{X}$, and an independent $(\tilde{\mathcal{F}}_t)_{t\geq 0}$-Brownian motion $(\tilde{B}_t)_{t\geq 0}$ with $\tilde{X}_n\sim u_0^n$ for $n<\infty$, $X\sim u_0$, $\tilde{X}_n\ra \tilde{X}$ $\tilde{\Pm}$-almost surely as $n\ra \infty$, and $\tilde{B}_0=0$ $\tilde{\Pm}$-almost surely. We write $\tilde{\expE}$ for expectation under $\tilde{\Pm}$. 

We now fix $0<\delta'<\delta<\infty$. It follows from the above and Fatou's lemma that
\[
\begin{split}
&\tilde{\Pm}(\tilde{B}_t+\tilde{X}<L_t\text{ for some $t\in [\delta',\delta]$})\leq \tilde{\expE}[\liminf_{n\ra\infty}\Ind(\tilde{B}_t+\tilde{X}_n<L^n_t\text{ for some $t\in [\delta',\delta]$})]\\
&\leq \liminf_{n\ra\infty}\tilde{\Pm}(\tilde{B}_t+\tilde{X}_n<L^n_t\text{ for some $t\in [\delta',\delta]$})\leq \delta.
\end{split}
\]
The last inequality follows since $\tilde B_t +\tilde X_n $ is a Brownian motion started from $u_0^n$, which is atomless. Hence, the hitting time of the free boundary $L^n_t$ is exponential by the first part of the proof.

Therefore, using Lemma \ref{lem:Lipschitz from the left free bdy} and Remark \ref{rmk:remark on two types of hitting time equality due to Lipschitz}, 
\[
\Pm_{u_0}(\tau< \delta)=\Pm_{u_0}(B_t<L_t\text{ for some $t\in (0,\delta)$})=\tilde{\Pm}(\tilde{B}_t+\tilde{X}<L_t\text{ for some }t\in (0,\delta))<\delta.
\]

It follows that, $\Pm_{u_0}$-almost surely, $\tau>0$. The fact that $\Pm_{u_0}(\tau>t)>0$ for any $t<\infty$ follows, as before, from the parabolic Harnack inequality.

The continuity of the sample paths of Brownian motion yields that 
\begin{equation}\label{eq:continuity at 0 of Brownian motion conditioned not to hit free bdy}
    \Law_{u_0}(B_h\lvert \tau>h)\ra u_0
\end{equation}
weakly as $h\ra 0$. Moreover the parabolic Harnack inequality implies that $\Law_{u_0}(B_h\lvert \tau>h)$ is atomless (in fact it has a bounded density with respect to Lebesgue measure) for any $h>0$. We can therefore apply Theorem \ref{theo:stochastic representation for PDE} for atomless initial conditions to $\Law_{u_0}(B_h\lvert \tau>h)$ for any $h>0$. We obtain that
\begin{equation}\label{eq:composition to obtain stochastic rep}
\Law_{u_0}(B_{h+t}\lvert \tau>h+t)=\Psi_t(\Law_{u_0}(B_{h}\lvert \tau>h))
\end{equation}
for any $h,t>0$. Using \eqref{eq:continuity at 0 of Brownian motion conditioned not to hit free bdy} and Lemma \ref{lem:sensitivity depending on ics}, we obtain \eqref{eq:PDE solution given by density of BM} by fixing $t\in \Rm_{>0}$ and taking the limit of \eqref{eq:composition to obtain stochastic rep} as $h\ra 0$.

Since we have established \eqref{eq:killing time for free boundary exp(1)} for atomless initial conditions and $\Law_{u_0}(B_h\lvert \tau>h)$ is atomless for any $h>0$, 
\[
\frac{\Pm_{u_0}(\tau>h+t)}{\Pm_{u_0}(\tau>h)}=\Pm_{u_0}(\tau >h+t\lvert \tau>h)=e^{-t}
\]
for any $t,h>0$. Since $\tau>0$ almost surely, we can take the limit as $h\downarrow 0$ for fixed $t>0$ to see that
\[
\Pm_{u_0}(\tau> t)=\frac{\Pm_{u_0}(\tau>t)}{\Pm_{u_0}(\tau>0)}=\lim_{h\downarrow 0}\frac{\Pm_{u_0}(\tau>h+t)}{\Pm_{u_0}(\tau>h)}=\lim_{h\downarrow 0}e^{-t}=e^{-t},
\]
whence \eqref{eq:killing time for free boundary exp(1)} follows.\end{proof}

\end{appendix}

{\textbf{Acknowledgement:}} 
The authors are grateful to James Nolen for stimulating discussions on this problem. The work of OT was funded by the EPSRC MathRad programme grant EP/W026899/.

\bibliographystyle{alpha}
\bibliography{SelectionPrincipleBib.bib}
\end{document}